\numberwithin{equation}{section}
\titleformat{\section}[block]{\bfseries\filcenter}
{{\upshape\thesection\enspace}}{.5em}{}
\titleformat{\subsection}[block]{\filcenter}
{{\upshape\thesubsection\enspace}}{.5em}{} 
\titleformat{\subsubsection}[block]{\filcenter}
{{\upshape\thesubsubsection\enspace}}{.5em}{} 
\setlist{nosep}  
\newcommand{\N}{\mathbb{N}}     
\newcommand{\R}{\mathbb{R}}     
\newcommand{\C}{\mathbb{C}}     
\newcommand{\Prob}{\mathbb{P}}  
\newcommand{\Exp}{\mathbb{E}}   
\newcommand{\goth}[1]{\mathfrak{#1}} 
\newcommand{\ind}[2]{\mathbbm{1}_{#1}\left( #2 \right)}          
\newcommand{\inner}[2]{\left\langle #1 \, , \, #2 \right\rangle} 
\newcommand{\norm}[1]{\left|\left|#1\right|\right|}              
\newcommand{\triplet}[3]{\left( #1, #2, #3 \right) }             
\newcommand{\ProbSpace}{\triplet{\Omega}{\mathscr{F}}{\Prob}}    
\newcommand{\abs}[1]{\left| #1 \right|}                          
\renewcommand{\qedsymbol}{$\square$}                       
\newcommand{\defeq}{\mathrel{\mathop:}=}                         
\newcommand\restr[2]{{
  \left.\kern-\nulldelimiterspace 
  #1 
  \vphantom{\big|} 
  \right|_{#2} 
  }}
\newsavebox{\@brx}
\newcommand{\llangle}[1][]{\savebox{\@brx}{\(\m@th{#1\langle}\)}%
  \mathopen{\copy\@brx\kern-0.5\wd\@brx\usebox{\@brx}}}
\newcommand{\rrangle}[1][]{\savebox{\@brx}{\(\m@th{#1\rangle}\)}%
  \mathclose{\copy\@brx\kern-0.5\wd\@brx\usebox{\@brx}}}
\theoremstyle{plain} 
\newtheorem{theo}{Theorem}[section]    
\newtheorem{prop}[theo]{Proposition} 
\newtheorem{coro}[theo]{Corollary}
\newtheorem{lemm}[theo]{Lemma}
\newtheorem{assu}[theo]{Assumption}
\theoremstyle{definition} 
\newtheorem{defi}[theo]{Definition}
\newtheorem{exam}[theo]{Example}
\newtheorem{rema}[theo]{Remark}
\declaretheoremstyle[%
  spaceabove=-5pt,%
  spacebelow=6pt,%
  headfont=\normalfont\itshape,%
  postheadspace=1em,%
  qed=\qedsymbol%
]{mystyle} 
\declaretheorem[name={Proof},style=mystyle,unnumbered,
]{prf}
 \title{Semimartingales on Duals of Nuclear Spaces}
\author{C. A. Fonseca-Mora}
\affil{  Escuela de Matem\'{a}tica, Universidad de Costa Rica, \\ San Jos\'{e}, 11501-2060, Costa Rica. 

\noindent E-mail:  christianandres.fonseca@ucr.ac.cr }
\date{}
\begin{document}

 \maketitle

\abstract{
This work is devoted to the study of semimartingales on the dual of a general nuclear space. We start by establishing conditions for a cylindrical semimartingale in the strong dual $\Phi'$ of a nuclear space $\Phi$ to have a $\Phi'$-valued semimartingale version  whose paths are right-continuous with left limits. Results of similar nature but for more specific classes of cylindrical semimartingales and examples are also provided.   
Later, we will show that under some general conditions every semimartingale taking values in the dual of a nuclear space has a canonical representation. The concept of predictable characteristics is introduced and is used to establish necessary and sufficient conditions for a $\Phi'$-valued semimartingale to be a $\Phi'$-valued L\'{e}vy process. 
}

\smallskip

\emph{2010 Mathematics Subject Classification:} 60B11, 60G17,  60G20, 60G48. 

\emph{Key words and phrases:} cylindrical semimartingales, semimartingales, dual of a nuclear space, regularization theorem, semimartingale canonical representation, L\'{e}vy processes.  

\section{Introduction}

In recent years, there has been an increasing interest in the study of cylindrical stochastic processes in infinite dimensional spaces. One of the main motivations is the use of these random objects to construct stochastic integrals and as the driving noise to a stochastic partial differential equation (see e.g. \cite{ApplebaumRiedle:2010, FonsecaMora:2018-1, IssoglioRiedle:2014, PriolaZabczyk:2011,  Riedle:2015, Riedle:2018, VeraarYaroslavtsev:2016}).
Due to the great importance of the semimartingales in the theory of stochastic calculus, it is only natural to consider cylindrical semimartingales as the driving force for these types of stochastic equations. 

Let $\Phi$ be a nuclear space and let $\Phi'$ its strong dual. Under the  assumption that $\Phi'$ is a complete nuclear space, the concept of $\Phi'$-valued semimartingales was firstly introduced by \"{U}st\"{u}nel in \cite{Ustunel:1982}. There, \"{U}st\"{u}nel used the fact that under these assumptions $\Phi'$ can be expressed as a projective limit of Hilbert spaces with Hilbert-Schmidt embeddings and defined the $\Phi'$-valued semimartingales as a projective system of Hilbert space valued semimartingales. Further properties of $\Phi'$-valued semimartingales and stochastic calculus with respect to them where explored  by  \"{U}st\"{u}nel in his subsequent works \cite{Ustunel:1982-1, Ustunel:1984, Ustunel:1986}, by P\'{e}rez-Abreu \cite{PerezAbreu:1988}, and by other authors (e.g. \cite{BojdeckiGorostiza:1991, DawsonGorostiza:1990,  PerezAbreuRochaArteagaTudor:2005}). However, \"{U}st\"{u}nel's approach for semimartingales as projective systems can no longer be applied if one assumes that $\Phi$ is a general nuclear space, because in that case $\Phi'$ is usually not nuclear nor complete. Therefore, if $\Phi$ is assumed only to be nuclear, a new theory of semimartingales has to be developed and this is exactly the purpose of this paper; to begin a systematic study of cylindrical semimartingales and semimartingales in the strong dual $\Phi'$ of a general nuclear space $\Phi$. Our main motivation is the further introduction of a theory of stochastic integration and of stochastic partial differential equations driven by semimartingales in duals of nuclear spaces. This program was carried out by the author in \cite{FonsecaMora:2018-1} for the L\'{e}vy case, but for the semimartingale case a new approach has to be considered. The necessary properties of semimartingales are developed in this paper.  The corresponding application to stochastic integration and existence of solutions to stochastic partial differential equations will appear in forthcoming papers.  

We start in Sect. \ref{secNotaDefi} by setting our notation and by reviewing some useful properties of nuclear spaces, cylindrical processes, and the space $S^{0}$ of real-valued semimartingales. Then, in Sect. \ref{secCylSemDualNuclear} we address the first main problem of our study; it consists in to establish conditions under which a cylindrical semimartingale $X=( X_{t}: t \geq 0) $ in $\Phi'$ does have a semimartingale version with c\`{a}dl\`{a}g (i.e. right-continuous with left limits) paths. In the literature this procedure is often known as ``regularization'' (see \cite{FonsecaMora:2018, Ito, PerezAbreu:1988}). Our main result is Theorem \ref{theoRegulCylinSemimartigalesNuclear} where it is shown that a sufficient condition is equicontinuity for each $T>0$ of the family $( X_{t}: t \in [0,T])$ as operators from $\Phi$ into the space of real-valued random variables $L^{0}\ProbSpace$ defined on a given probability space $\ProbSpace$. 
In Proposition \ref{propCylSemimContOpeSpaceSemim} we show that this condition is equivalent to the condition that $X$ as a linear map from $\Phi$ into  $S^{0}$ be continuous. As a consequence of our result we show that if the nuclear space is ultrabornological (e.g. Fr\'{e}chet), then each $\Phi'$-valued semimartingale with Radon laws has a  c\`{a}dl\`{a}g version. Regularization theorems for more specific classes of semimartingales and examples are also considered. Our regularization results generalize those obtained by \"{U}st\"{u}nel in \cite{Ustunel:1982, Ustunel:1982-1} and by P\'{e}rez-Abreu \cite{PerezAbreu:1988} where only $\Phi'$-valued semimartingales (but not cylindrical semimartingales) were considered and it is assumed that $\Phi'$ is complete nuclear. 

Our second main aim on this work is to find a canonical representation for $\Phi'$-valued semimartingales. This is done in Sect. \ref{secDecomSemi} by using our regularization results from Sect. \ref{secCylSemDualNuclear}. Our main result is Theorem \ref{theoSemirtDecompNucleSpace} where we show that if for a $\Phi'$-valued semimartingale we assume equicontinuity for each $T>0$ of the induced cylindrical process $( X_{t}: t \in [0,T])$, then $X$ possesses a canonical representation similar to that for semimartingales in finite dimensions. This equicontinuity assumption is always satisfied if the nuclear space is ultrabornological and the semimartingale have Radon laws. Furthermore, we introduce the concept of predictable characteristics for $\Phi'$-valued semimartingales. To the extend of our knowledge the only previous work that considered the  existence of a canonical representation for semimartingales in the dual of a nuclear Fr\'{e}chet space was carried out by P\'{e}rez-Abreu in \cite{PerezAbreu:1988}. Observe that our result generalize that of P\'{e}rez-Abreu to semimartingales in the dual of a general nuclear space. 

Finally, in Sect. \ref{sectCharLevyProce} we examine in detail the canonical representation of a $\Phi'$-valued L\'{e}vy process and its relation with their L\'{e}vy-It\^{o} decomposition studied by the author in \cite{FonsecaMora:Levy}. It is shown that the predictable characteristics of a L\'{e}vy process coincide with those of its L\'{e}vy-Khintchine formula and that the particular form of these characteristics distinguish the L\'{e}vy process among the $\Phi'$-valued semimartingales. 
  
We hope that our semimartingale canonical representation and our definition of characteristics could be used in the future to study further properties of $\Phi'$-valued semimartingales, as for example to study functional central limit theorems.

\section{Definitions and Notation}\label{secNotaDefi}


Let $\Phi$ be a locally convex space (we will only consider vector spaces over $\R$). 
If $p$ is a continuous semi-norm on $\Phi$ and $r>0$, the closed ball of radius $r$ of $p$ given by $B_{p}(r) = \left\{ \phi \in \Phi: p(\phi) \leq r \right\}$ is a closed, convex, balanced neighborhood of zero in $\Phi$. A continuous semi-norm (respectively a norm) $p$ on $\Phi$ is called \emph{Hilbertian} if $p(\phi)^{2}=Q(\phi,\phi)$, for all $\phi \in \Phi$, where $Q$ is a symmetric, non-negative bilinear form (respectively inner product) on $\Phi \times \Phi$. Let $\Phi_{p}$ be the Hilbert space that corresponds to the completion of the pre-Hilbert space $(\Phi / \mbox{ker}(p), \tilde{p})$, where $\tilde{p}(\phi+\mbox{ker}(p))=p(\phi)$ for each $\phi \in \Phi$. The quotient map $\Phi \rightarrow \Phi / \mbox{ker}(p)$ has an unique continuous linear extension $i_{p}:\Phi \rightarrow \Phi_{p}$.  Let $q$ be another continuous Hilbertian semi-norm on $\Phi$ for which $p \leq q$. In this case, $\mbox{ker}(q) \subseteq \mbox{ker}(p)$. Moreover, the inclusion map from $\Phi / \mbox{ker}(q)$ into $\Phi / \mbox{ker}(p)$ is linear and continuous, and therefore it has a unique continuous extension $i_{p,q}:\Phi_{q} \rightarrow \Phi_{p}$. Furthermore, we have the following relation: $i_{p}=i_{p,q} \circ i_{q}$. 

We denote by $\Phi'$ the topological dual of $\Phi$ and by $\inner{f}{\phi}$ the canonical pairing of elements $f \in \Phi'$, $\phi \in \Phi$. Unless otherwise specified, $\Phi'$ will always be consider equipped with its \emph{strong topology}, i.e. the topology on $\Phi'$ generated by the family of semi-norms $( \eta_{B} )$, where for each $B \subseteq \Phi$ bounded we have $\eta_{B}(f)=\sup \{ \abs{\inner{f}{\phi}}: \phi \in B \}$ for all $f \in \Phi'$.  If $p$ is a continuous Hilbertian semi-norm on $\Phi$, then we denote by $\Phi'_{p}$ the Hilbert space dual to $\Phi_{p}$. The dual norm $p'$ on $\Phi'_{p}$ is given by $p'(f)=\sup \{ \abs{\inner{f}{\phi}}:  \phi \in B_{p}(1) \}$ for all $ f \in \Phi'_{p}$. Moreover, the dual operator $i_{p}'$ corresponds to the canonical inclusion from $\Phi'_{p}$ into $\Phi'$ and it is linear and continuous. 

Let $p$ and $q$ be continuous Hilbertian semi-norms on $\Phi$ such that $p \leq q$.
The space of continuous linear operators (respectively Hilbert-Schmidt operators) from $\Phi_{q}$ into $\Phi_{p}$ is denoted by $\mathcal{L}(\Phi_{q},\Phi_{p})$ (respectively $\mathcal{L}_{2}(\Phi_{q},\Phi_{p})$) and the operator norm (respectively Hilbert-Schmidt norm) is denote by $\norm{\cdot}_{\mathcal{L}(\Phi_{q},\Phi_{p})}$ (respectively $\norm{\cdot}_{\mathcal{L}_{2}(\Phi_{q},\Phi_{p})}$). We employ an analogous notation for operators between the dual spaces $\Phi'_{p}$ and $\Phi'_{q}$. 

A locally convex space is called \emph{ultrabornological} if it is the inductive limit of a family of Banach spaces. A \emph{barreled space} is a locally convex space such that every convex, balanced, absorbing and closed subset is a neighborhood of zero. For equivalent definitions see \cite{Jarchow, NariciBeckenstein}. 
  
Let us recall that a (Hausdorff) locally convex space $(\Phi,\mathcal{T})$ is called \emph{nuclear} if its topology $\mathcal{T}$ is generated by a family $\Pi$ of Hilbertian semi-norms such that for each $p \in \Pi$ there exists $q \in \Pi$, satisfying $p \leq q$ and the canonical inclusion $i_{p,q}: \Phi_{q} \rightarrow \Phi_{p}$ is Hilbert-Schmidt. Other equivalent definitions of nuclear spaces can be found in \cite{Pietsch, Treves}. 

Let $\Phi$ be a nuclear space. If $p$ is a continuous Hilbertian semi-norm  on $\Phi$, then the Hilbert space $\Phi_{p}$ is separable (see \cite{Pietsch}, Proposition 4.4.9 and Theorem 4.4.10, p.82). Now, let $( p_{n} : n \in \N)$ be an increasing sequence of continuous Hilbertian semi-norms on $(\Phi,\mathcal{T})$. We denote by $\theta$ the locally convex topology on $\Phi$ generated by the family $( p_{n} : n \in \N)$. The topology $\theta$ is weaker than $\mathcal{T}$. We  will call $\theta$ a (weaker) \emph{countably Hilbertian topology} on $\Phi$ and we denote by $\Phi_{\theta}$ the space $(\Phi,\theta)$ and by $\widehat{\Phi}_{\theta}$ its completion. The space $\widehat{\Phi}_{\theta}$ is a (not necessarily Hausdorff) separable, complete, pseudo-metrizable (hence Baire and ultrabornological; see Example 13.2.8(b) and Theorem 13.2.12 in \cite{NariciBeckenstein}) locally convex space and its dual space satisfies $(\widehat{\Phi}_{\theta})'=(\Phi_{\theta})'=\bigcup_{n \in \N} \Phi'_{p_{n}}$ (see \cite{FonsecaMora:2018}, Proposition 2.4). 

\begin{exam}\label{examNuclearSpaces} It is well-known (see e.g. \cite{Pietsch, Schaefer, Treves}) that the space of test functions $\mathscr{E}_{K} \defeq \mathcal{C}^{\infty}(K)$ ($K$: compact subset of $\R^{d}$), $\mathscr{E}\defeq \mathcal{C}^{\infty}(\R^{d})$, the rapidly decreasing functions $\mathscr{S}(\R^{d})$, and the space of harmonic functions $\mathcal{H}(U)$ ($U$: open subset of $\R^{d}$; see \cite{Pietsch}, Section 6.3),  are all  examples of Fr\'{e}chet nuclear spaces. Their (strong) dual spaces $\mathscr{E}'_{K}$, $\mathscr{E}'$, $\mathscr{S}'(\R^{d})$, $\mathcal{H}'(U)$, are also nuclear spaces. On the other hand, the space of test functions $\mathscr{D}(U) \defeq \mathcal{C}_{c}^{\infty}(U)$ ($U$: open subset of $\R^{d}$), the space of polynomials $\mathcal{P}_{n}$ in $n$-variables, the space of real-valued sequences $\R^{\N}$ (with direct sum topology) are strict inductive limits of Fr\'{e}chet nuclear spaces (hence they are also nuclear). The space of distributions  $\mathscr{D}'(U)$  ($U$: open subset of $\R^{d}$) is also nuclear. All the above are examples of (complete) ultrabornological nuclear spaces. Other interesting examples of nuclear spaces are the space of continuous linear operators between a semi-reflexive dual nuclear space into a nuclear space, and tensor products of arbitrary nuclear spaces. These spaces need not be ultrabornological as for example happens with the space $\mathscr{D}(\R) \widehat{\otimes} \mathscr{S}(\R^{d}) $ (see \cite{BojdeckiGorostizaRamaswamy:1986}). 
\end{exam}


Throughout this work we assume that $\ProbSpace$ is a complete probability space and consider a filtration $( \mathcal{F}_{t} : t \geq 0)$ on $\ProbSpace$ that satisfies the \emph{usual conditions}, i.e. it is right continuous and $\mathcal{F}_{0}$ contains all subsets of sets of $\mathcal{F}$ of $\Prob$-measure zero. We denote by $L^{0} \ProbSpace$ the space of equivalence classes of real-valued random variables defined on $\ProbSpace$. We always consider the space $L^{0} \ProbSpace$ equipped with the topology of convergence in probability and in this case it is a complete, metrizable, topological vector space. 


A \emph{cylindrical random variable}\index{cylindrical random variable} in $\Phi'$ is a linear map $X: \Phi \rightarrow L^{0} \ProbSpace$ (see \cite{FonsecaMora:2018}). If $X$ is a cylindrical random variable in $\Phi'$, we say that $X$ is \emph{$n$-integrable} ($n \in \N$)  if $ \Exp \left( \abs{X(\phi)}^{n} \right)< \infty$, $\forall \, \phi \in \Phi$, and has \emph{zero-mean} if $ \Exp \left( X(\phi) \right)=0$, $\forall \phi \in \Phi$. The \emph{Fourier transform} of $X$ is the map from $\Phi$ into $\C$ given by $\phi \mapsto \Exp ( e^{i X(\phi)})$.

Let $X$ be a $\Phi'$-valued random variable, i.e. $X:\Omega \rightarrow \Phi'_{\beta}$ is a $\mathscr{F}/\mathcal{B}(\Phi'_{\beta})$-measurable map. For each $\phi \in \Phi$ we denote by $\inner{X}{\phi}$ the real-valued random variable defined by $\inner{X}{\phi}(\omega) \defeq \inner{X(\omega)}{\phi}$, for all $\omega \in \Omega$. The linear mapping $\phi \mapsto \inner{X}{\phi}$ is called the \emph{cylindrical random variable induced/defined by} $X$. We will say that a $\Phi'$-valued random variable $X$ is \emph{$n$-integrable} ($n \in \N$) if the cylindrical random variable induced by $X$ is \emph{$n$-integrable}. 
 
Let $J=\R_{+} \defeq [0,\infty)$ or $J=[0,T]$ for  $T>0$. We say that $X=( X_{t}: t \in J)$ is a \emph{cylindrical process} in $\Phi'$ if $X_{t}$ is a cylindrical random variable for each $t \in J$. Clearly, any $\Phi'$-valued stochastic processes $X=( X_{t}: t \in J)$ induces/defines a cylindrical process under the prescription: $\inner{X}{\phi}=( \inner{X_{t}}{\phi}: t \in J)$, for each $\phi \in \Phi$. 

If $X$ is a cylindrical random variable in $\Phi'$, a $\Phi'$-valued random variable $Y$ is called a \emph{version} of $X$ if for every $\phi \in \Phi$, $X(\phi)=\inner{Y}{\phi}$ $\Prob$-a.e. A $\Phi'$-valued process $Y=(Y_{t}:t \in J)$ is said to be a $\Phi'$-valued \emph{version} of the cylindrical process $X=(X_{t}: t \in J)$ on $\Phi'$ if for each $t \in J$, $Y_{t}$ is a $\Phi'$-valued version of $X_{t}$.  

For a $\Phi'$-valued process $X=( X_{t}: t \in J)$ terms like continuous, c\`{a}dl\`{a}g, purely discontinuous, adapted, predictable, etc. have the usual (obvious) meaning. 

A $\Phi'$-valued random variable $X$ is called \emph{regular} if there exists a weaker countably Hilbertian topology $\theta$ on $\Phi$ such that $\Prob( \omega: X(\omega) \in (\widehat{\Phi}_{\theta})')=1$. Furthermore, a $\Phi'$-valued process $Y=(Y_{t}:t \in J)$ is said to be \emph{regular} if $Y_{t}$ is a regular random variable for each $t \in J$. In that case the law of each $Y_{t}$ is a Radon measure in $\Phi'$ (see Theorem 2.10 in \cite{FonsecaMora:2018}).

If $\Psi$ is a separable Hilbert space, recall that a $\Psi$-valued adapted c\`{a}dl\`{a}g  process $X=( X_{t}: t \geq 0) $ is a \emph{semimartingale} if it admit a representation of the form 
$$X_{t}=X_{0} + M_{t}+A_{t}, \quad \forall \, t \geq 0, $$
where $M=(M_{t}:t \geq 0)$ is a c\`{a}dl\`{a}g local martingale and $A=(A_{t}: t \geq 0)$ is a c\`{a}dl\`{a}g adapted process of finite variation, and $M_{0}=A_{0}=0$. The reader is referred to \cite{Metivier} for the basic theory of Hilbert space valued semimartingales and to \cite{DellacherieMeyer, JacodShiryaev, Protter} for $\R^{d}$-valued semimartingales.  

We denote by $S^{0}$ the linear space of (equivalence classes) of real-valued semimartingales. We consider on $S^{0}$ the topology given in Memin \cite{Memin:1980}: define $\abs{\cdot}_{0}$ on  $S^{0}$ by  
$$ \abs{z}_{0}=\sum_{k=1}^{\infty} 2^{-k} \abs{z}_{k}, $$
where for each $k \in \N$, 
$$ \abs{z}_{k}=\sup_{h \in \mathcal{E}_{1}} \Exp  \left( 1 \wedge \abs{(h \cdot z)_{k} }\right), $$
$\mathcal{E}_{1}$ is the class of real-valued predictable processes $h$ of the form 
$$ h=\sum_{i=1}^{n-1} a_{i} \mathbbm{1}_{(t_{i}, t_{i+1}] \times \Omega},  $$ 
for $0 < t_{1} < t_{2} < \dots t_{n} < \infty$, $a_{i}$ is an $\mathcal{F}_{t_{i}}$-measurable random variable, $\abs{a_{i}} \leq 1$, $i=1, \dots, n-1$, and 
$$ (h \cdot z)_{k}=\int_{0}^{k} h_{s} dz_{s}= \sum_{i=1}^{n-1} a_{i} \left( z_{t_{i+1} \wedge k}-z_{t_{i} \wedge k} \right). $$  
Then, $d(y,z)=\abs{y-z}_{0}$ is a metric on $S^{0}$ and $(S^{0},d)$ is a complete, metric, topological vector space (is not in general locally convex). 
Unless otherwise specified, the space $S^{0}$ will always be consider equipped with this topology.  

If $x=(x_{t}:t \geq 0)$ is a real-valued semimartingale, we denote by $\norm{x}_{S^{p}}$ ($1 \leq p < \infty$) the following quantity: 
$$\norm{x}_{S^{p}} = \inf \left\{ \norm{ [m,m]_{\infty}^{1/2} + \int_{0}^{\infty} \abs{d a_{s} } }_{L^{p}\ProbSpace} : x=m+a \right\}, $$
where the infimum is taken over all the decompositions $x=m+a$ as a sum of a local martingale $m$ and a process of finite variation $a$. Recall that  $([m,m]_{t}: t \geq 0)$ denotes the quadratic variation process associated to the local martingale $m$, i.e. $[m,m]_{t}=\llangle  m^{c} , m^{c}  \rrangle_{t}+\sum_{0 \leq s \leq t} (\Delta m_{s})^{2}$, where $m^{c}$ is the (unique) continuous local martingale part of $m$ and $(\llangle  m^{c} , m^{c}  \rrangle_{t}:t \geq 0)$ its angle bracket process (see Section I in \cite{JacodShiryaev}). The set of all semimartingales $x$ for which $\norm{x}_{S^{p}}< \infty$ is a Banach space under the norm $\norm{\cdot}_{S^{p}}$ and is denoted by $S^{p}$ (see VII.98 in \cite{DellacherieMeyer}). Furthermore, if $x=m+a$ is a decomposition of $x$ such that $\norm{x}_{S^{p}}< \infty$ it is known that in such a case $a$ is of integrable variation (see VII.98(c) in \cite{DellacherieMeyer}). 

\section{Cylindrical Semimartingales in the Dual of a Nuclear Space} \label{secCylSemDualNuclear}

\begin{assu}
Throught this section and unless otherwise specified $\Phi$ will always denote a nuclear space. 
\end{assu}

\subsection{Regularization of Cylindrical Semimartigales}

In this section our main objective is to establish sufficient conditions for a cylindrical semimartingale in $\Phi'$  to have a $\Phi'$-valued c\`{a}dl\`{a}g semimartingale version. 
Following \cite{FonsecaMora:2018} and \cite{PerezAbreu:1988}, we call such a result as \emph{regularization of cylindrical semimartingales} (see Theorem \ref{theoRegulCylinSemimartigalesNuclear}). As a consequence of our results for cylindrical semimartingales we will show that if we assume some extra structure on the space $\Phi$ then every $\Phi'$-semimartingale has a c\`{a}dl\`{a}g version (see Proposition \ref{propCadlagVerSemimUltrabNuclear}).  We begin by introducing  our definition of cylindrical semimartingales in duals of nuclear spaces.  

\begin{defi}
A \emph{cylindrical semimartingale} in $\Phi'$ is a cylindrical process $X=(X_{t}: t \geq 0)$ in $\Phi'$ such that $\forall \phi \in \Phi$, $X(\phi)$ is a real-valued semimartingale. A \emph{cylindrical local martingale (resp. cylindrical martingale)} in $\Phi'$ is a cylindrical process $M=(M_{t}:t \geq 0)$ for which $M(\phi)=(M_{t}(\phi):t \geq 0)$ is a real-valued local martingale (resp. a martingale). 
In a similar way, a \emph{cylindrical finite variation process} in $\Phi'$ is a cylindrical process $A=(A_{t}:t \geq 0)$ for which $A(\phi)=(A_{t}(\phi):t \geq 0)$ is a real-valued process of finite variation (i.e. if $\Prob$-a.a. of its paths have locally bounded variation).
\end{defi}

If $X$ is a cylindrical semimartingale in $\Phi'$, then each $X(\phi)$ has the decomposition 
\begin{equation} \label{cylinSemimartingaleDecomp}
X_{t}^{\phi}=X_{0}^{\phi}+M_{t}^{\phi}+A_{t}^{\phi}, \quad \forall \, t \geq 0, 
\end{equation}
where $M^{\phi}=(M_{t}^{\phi}:t \geq 0)$ is a real-valued c\`{a}dl\`{a}g local martingale with $M_{0}^{\phi}=0$, and $A^{\phi}=(A_{t}^{\phi}:t \geq 0)$ is a real-valued c\`{a}dl\`{a}g adapted process of finite variation and $A_{0}^{\phi}=0$. If there exists a decomposition \eqref{cylinSemimartingaleDecomp} with each $A^{\phi}$ predictable, we will say that $X$ is a \emph{special cylindrical semimartingale}. It is important to stress the fact that in general the maps $\phi \mapsto M^{\phi}$ and $\phi \mapsto A^{\phi}$ do not define a cylindrical local martingale and a cylindrical finite variation process because they might be not linear operators. For more details see the discussion in Example 3.10 in \cite{ApplebaumRiedle:2010} and Remark 2.2 in \cite{JakubowskiRiedle:2017}.

\begin{defi}\label{defiSemimartingale}
A $\Phi'$-valued process $X=(X_{t}: t \geq 0)$ is a \emph{semimartingale} if the induced cylindrical process is a cylindrical semimartingale. In a completely analogue way we define the concepts of $\Phi'$-valued special semimartingale, martingale, local martingale and process of finite variation. 
\end{defi}

\begin{exam}
One important example of a cylindrical semimartingale in $\Phi'$ is a \emph{cylindrical L\'{e}vy process}, i.e. a cylindrical process $L=(L_{t}:t \geq 0)$ in $\Phi'$ such that  $\forall \, n\in \N$, $\phi_{1}, \dots, \phi_{n} \in \Phi$, the $\R^{n}$-valued process $( (L_{t}(\phi_{1}), \dots, L_{t}(\phi_{n})): t \geq 0)$ is a L\'{e}vy process.

In a similar way, an example of a $\Phi'$-valued semimartingale is a L\'{e}vy processes. Recall that a $\Phi'$-valued process $L=( L_{t} :t\geq 0)$ is called a \emph{L\'{e}vy process} if \begin{inparaenum}[(i)] \item  $L_{0}=0$ a.s., 
\item $L$ has \emph{independent increments}, i.e. for any $n \in \N$, $0 \leq t_{1}< t_{2} < \dots < t_{n} < \infty$ the $\Phi'$-valued random variables $L_{t_{1}},L_{t_{2}}-L_{t_{1}}, \dots, L_{t_{n}}-L_{t_{n-1}}$ are independent,  
\item L has \emph{stationary increments}, i.e. for any $0 \leq s \leq t$, $L_{t}-L_{s}$ and $L_{t-s}$ are identically distributed, and  
\item For every $t \geq 0$ the  probability distribution $\mu_{t}$ of $L_{t}$ is a Radon measure and the mapping $t \mapsto \mu_{t}$ from $\R_{+}$ into the space of Radon probability measures on $\Phi'$ (equipped with the weak topology) is continuous at the origin. \end{inparaenum}

Properties of cylindrical and $\Phi'$-valued L\'{e}vy processes were studied by the author in \cite{FonsecaMora:Levy}. The reader is referred also to  \cite{PerezAbreuRochaArteagaTudor:2005, Ustunel:1984} for other studies on L\'{e}vy and additive processes in the dual of some particular classes of nuclear spaces.
\end{exam}

\begin{exam} Another important example of semimartingales in the dual of a nuclear space are the solutions to certain stochastic evolution equations. 
Let $\Phi$ be a nuclear Fr\'{e}chet space and $M=(M_{t}: t \geq 0)$ be a $\Phi'$-valued square integrable c\`{a}dl\`{a}g martingale. Let $A$ be a continuous linear operator on $\Phi$ that is the infinitesimal generator of a $(C_{0},1)$-semigroup $(S(t): t \geq 0)$ of continuous linear operators on $\Phi$ (see Definition 1.2 in \cite{KallianpurPerezAbreu:1988}). Denote by $A'$ the dual operator of $A$ and by $(S'(t): t \geq 0)$ the dual semigroup to $(S(t): t \geq 0)$. If $\gamma$ is a $\Phi'$-valued square integrable random variable, it is proved in Corollary 2.2  in \cite{KallianpurPerezAbreu:1988} that the homogeneous stochastic evolution equation
$$ dX_{t} = A' X_{t} dt + dM_{t}, \quad X_{0} = \gamma, $$
has a unique solution $X=(X_{t}: t \geq 0)$ given by  
$$ X_{t}=M_{t}+S'(t) \gamma + \int_{0}^{t} S'(t-s) A' M_{s} ds. $$
In particular, $X$ is a $\Phi'$-valued c\`{a}dl\`{a}g semimartingale (see Remark 2.1 in \cite{KallianpurPerezAbreu:1988}).  
\end{exam}

\begin{rema}\label{remaSemiDefiUstunel}
Under the additional assumption that $\Phi'$ is a complete nuclear space, another definition of $\Phi'$-valued semimartingales was introduced by \"{U}st\"{u}nel in \cite{Ustunel:1982} by means of the concept of projective system of stochastic process.  To explain this concept, let $( q_{i}: i \in I)$ be a family of Hilbertian seminorms generating the nuclear topology on $\Phi'$ and let $k_{i}: \Phi' \rightarrow (\Phi')_{q_{i}}$ denotes the canonical inclusion. A \emph{projective system of semimartingales} is a family $X=( X^{i}: i \in I)$ where each $X^{i}$ is a $(\Phi')_{q_{i}}$-valued semimartingale, and  such that if $q_{i} \leq q_{j}$ then $k_{q_{i},q_{j}} X^{j}$ and $X^{i}$ are indistinguishable. A $\Phi'$-valued processes $Y$ is a \emph{semimartingale} if $k_{i} Y= X^{i}$ for each $i \in I$. It is clear that any semimartingale defined in the above sense is also a semimartingale in the sense of Definition \ref{defiSemimartingale}. If $\Phi$ is a Fr\'{e}chet nuclear space the converse is also true as proved by \"{U}st\"{u}nel in \cite{Ustunel:1982-1} (Theorem II.1 and Corollary II.3). Observe that the definition of $\Phi'$-valued semimartingales as projective systems of semimartingales only makes sense if the strong dual $\Phi'$ is complete and nuclear. The above because if $\Phi'$ does not satisfy these assumptions it might not be possible to express $\Phi'$ as a projective limit of Hilbert spaces with Hilbert-Schmidt embeddings. 
\end{rema}

The following is the main result of this section:

\begin{theo}[Regularization of cylindrical semimartingales] \label{theoRegulCylinSemimartigalesNuclear}
Let $X=( X_{t}: t \geq 0) $ be a cylindrical semimartingale in $\Phi'$ such that for each $T>0$, the family of linear maps $( X_{t}: t \in [0,T])$ from $\Phi$ into $L^{0}\ProbSpace$ is equicontinuous (at the origin). Then, there exists a weaker countably Hilbertian topology $\theta$ on $\Phi$ and a $(\widehat{\Phi}_{\theta})'$-valued c\`{a}dl\`{a}g semimartingale $Y= ( Y_{t} : t \geq 0)$, such that for every $\phi \in \Phi$, $\inner{Y}{\phi}= ( \inner{Y_{t}}{\phi}: t \geq 0)$ is a version of $X(\phi)= ( X_{t}(\phi): t \geq 0)$. Moreover, $Y$ is a $\Phi'$-valued, regular, c\`{a}dl\`{a}g semimartingale that is a version of $X$ and it is unique up to indistinguishable versions.  Furthermore, if for each $\phi \in \Phi$ the real-valued semimartingale $X(\phi)$ is continuous, then $Y$ is a  continuous process in $(\widehat{\Phi}_{\theta})'$ and in $\Phi'$. 
\end{theo}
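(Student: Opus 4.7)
My plan is to reduce the theorem to the cylindrical-process regularization result from \cite{FonsecaMora:2018} and then transfer the semimartingale structure coordinate by coordinate.

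First I would apply the general regularization theorem for cylindrical processes in \cite{FonsecaMora:2018}. The equicontinuity hypothesis on $(X_t: t \in [0,T])$ for each $T > 0$ is exactly the standing assumption of that result, and each real-valued path $X(\phi)$ is \cadlag because it is a semimartingale. This gives both ingredients needed. The countably Hilbertian topology $\theta$ is built by selecting, for each $k \in \N$, a continuous Hilbertian seminorm $\rho_{k}$ on $\Phi$ witnessing equicontinuity of $(X_{t}: t \in [0,k])$, replacing it by a Hilbert-Schmidt neighbor $p_{k} \geq \rho_{k}$ via nuclearity of $\Phi$, and letting $\theta$ be the topology generated by the (increasing) sequence $(p_{k})$. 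The regularization then produces a \cadlag adapted process $Y = (Y_{t}: t \geq 0)$ in $(\widehat{\Phi}_{\theta})'$ with $\inner{Y_{t}}{\phi} = X_{t}(\phi)$ $\Prob$-a.s.\ for every $\phi \in \Phi$ and $t \geq 0$. Since $\theta$ is weaker than the original topology of $\Phi$, the canonical inclusion $(\widehat{\Phi}_{\theta})' \hookrightarrow \Phi'$ is continuous, so $Y$ is also a \cadlag $\Phi'$-valued process and is regular by construction.

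Next I would upgrade $Y$ from a \cadlag version to a semimartingale. For each $\phi \in \Phi$, $\inner{Y}{\phi}$ is a \cadlag modification of the real-valued semimartingale $X(\phi)$, and the semimartingale property is preserved under modification, so $\inner{Y}{\phi}$ is itself a real semimartingale. By Definition \ref{defiSemimartingale}, this makes $Y$ a $\Phi'$-valued semimartingale; viewing $Y$ as $(\widehat{\Phi}_{\theta})'$-valued and extending the pairing continuously from $\Phi$ (dense in $\widehat{\Phi}_{\theta}$) gives the semimartingale property there as well. For uniqueness, given a second \cadlag regular version $\tilde{Y}$, I would fix a countable set $D \subseteq \Phi$ that is $\theta$-dense (using the separability of $\widehat{\Phi}_{\theta}$ recorded in Section \ref{secNotaDefi}): for each $\phi \in D$ the real \cadlag processes $\inner{Y}{\phi}$ and $\inner{\tilde{Y}}{\phi}$ agree a.s.\ at every rational time and hence are indistinguishable, and then $D$-separation of $(\widehat{\Phi}_{\theta})'$ upgrades this to indistinguishability of $Y$ and $\tilde Y$ as $(\widehat{\Phi}_{\theta})'$-valued, and therefore $\Phi'$-valued, processes.

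Finally, the continuity statement would follow the same template. If each $X(\phi)$ is continuous, then so is each \cadlag modification $\inner{Y}{\phi}$, so its jump process vanishes identically $\Prob$-a.s. Intersecting null sets over the countable set $D$ yields a single $\Prob$-almost sure event on which $\inner{\Delta Y_{t}}{\phi} = 0$ for all $t \geq 0$ and all $\phi \in D$, and by $D$-separation $\Delta Y_{t} = 0$ as an element of $(\widehat{\Phi}_{\theta})'$ for every $t$; hence $Y$ is continuous in $(\widehat{\Phi}_{\theta})'$ and a fortiori in $\Phi'$. The hard part of this plan is Step 1: all the genuine analytic work — picking the right Hilbert-Schmidt tower, extending $X_{t}$ to the appropriate Hilbert-space completion, and passing from cylindrical distributions to \cadlag Radon-measure-valued paths via tightness and a Skorohod-type path argument — is bundled inside the cylindrical regularization theorem of \cite{FonsecaMora:2018}; everything after that step is a routine density-and-separation argument on the weaker countably Hilbertian topology.
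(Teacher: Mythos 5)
Your proposal is correct and follows essentially the same route as the paper: the paper's proof simply notes that each real-valued semimartingale $X(\phi)$ has a c\`{a}dl\`{a}g version and then invokes the Regularization Theorem (Theorem 3.2 in \cite{FonsecaMora:2018}), which already packages the construction of $\theta$, regularity, uniqueness, and the continuous case that you spell out. The extra details you add (transfer of the semimartingale property to $Y$, the density-and-separation uniqueness argument) are the same considerations the paper leaves implicit.
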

\begin{prf} Since for each $\phi \in \Phi$, the real-valued semimartingale $(X_{t}(\phi): t \geq 0)$ has a c\`{a}dl\`{a}g version (see \cite{DellacherieMeyer}, Section VII.23), the theorem follows using the Regularization Theorem (Theorem 3.2 in \cite{FonsecaMora:2018}). 
\end{prf}

\begin{coro}\label{coroRegulCylSemiUltraborno} If $\Phi$ is an ultrabornological nuclear space, the conclusion of Theorem \ref{theoRegulCylinSemimartigalesNuclear} remains valid if we only assume that each $X_{t}:\Phi \rightarrow L^{0}\ProbSpace$ is continuous (at the origin).   
\end{coro}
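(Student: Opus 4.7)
The plan is to reduce the corollary to Theorem \ref{theoRegulCylinSemimartigalesNuclear} by upgrading ``pointwise continuity of each $X_t$'' to ``equicontinuity of $\{X_t : t \in [0,T]\}$'' using the ultrabornological structure of $\Phi$. In other words, under the ultrabornological hypothesis, the apparently stronger equicontinuity assumption of Theorem \ref{theoRegulCylinSemimartigalesNuclear} follows for free.

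First I would observe that, for each fixed $\phi \in \Phi$, the real-valued process $X(\phi)$ is a semimartingale and therefore (after selecting its c\`adl\`ag version as in \cite{DellacherieMeyer}, Section VII.23) satisfies $\sup_{t \in [0,T]} \abs{X_{t}(\phi)} < \infty$ $\Prob$-a.s. In particular the set $\{X_{t}(\phi) : t \in [0,T]\}$ is bounded in $L^{0}\ProbSpace$. Hence the family of continuous linear maps $\{X_{t} : t \in [0,T]\}$ from $\Phi$ into $L^{0}\ProbSpace$ is pointwise bounded.

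Next I would invoke a Banach--Steinhaus type principle: for an ultrabornological locally convex space $\Phi$ mapping into the F-space $L^{0}\ProbSpace$, any pointwise bounded family of continuous linear maps is automatically equicontinuous at the origin. Writing $\Phi$ as the inductive limit of Banach spaces $(E_{i})_{i \in I}$, this follows by applying the classical uniform boundedness theorem on each Baire space $E_{i}$ (where pointwise boundedness into the F-space $L^{0}\ProbSpace$ yields equicontinuity), and then transferring equicontinuity to the inductive limit. This is precisely the uniform boundedness result on which the regularization machinery of \cite{FonsecaMora:2018} is built, and I would cite it directly.

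Once equicontinuity of $\{X_{t} : t \in [0,T]\}$ is established for every $T > 0$, the full conclusion (existence of a weaker countably Hilbertian topology $\theta$, a c\`adl\`ag $(\widehat{\Phi}_{\theta})'$-valued semimartingale version $Y$, its regularity, its uniqueness up to indistinguishability, and the preservation of continuity) is inherited verbatim from Theorem \ref{theoRegulCylinSemimartigalesNuclear}. The only delicate point in the argument is the Banach--Steinhaus step; the rest is bookkeeping.
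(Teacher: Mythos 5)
Your reduction strategy is the same as the paper's: upgrade the continuity of each $X_{t}$ to equicontinuity of $(X_{t}: t\in[0,T])$ and then quote Theorem \ref{theoRegulCylinSemimartigalesNuclear}. The pointwise boundedness observation is fine (a.s.\ boundedness of the c\`adl\`ag paths of each real semimartingale $X(\phi)$ does give boundedness of $\{X_{t}(\phi): t\in[0,T]\}$ in $L^{0}\ProbSpace$), and the classical Banach--Steinhaus theorem does apply on each Banach piece $E_{i}$, since $E_{i}$ is Baire and the codomain may be an arbitrary metrizable TVS. The gap is in the step you dismiss as a transfer: equicontinuity of $\{X_{t}\circ j_{i}\}$ on every Banach piece does \emph{not} routinely pass to the locally convex inductive limit when the codomain is not locally convex. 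The standard transfer takes a convex neighborhood $V$ in the codomain, picks $U_{i}\subseteq E_{i}$ with $X_{t}(j_{i}(U_{i}))\subseteq V$ for all $t$, and uses the convex balanced hull of $\bigcup_{i} j_{i}(U_{i})$; for $L^{0}\ProbSpace$ the neighborhoods $V$ are not convex, and convex hulls in $L^{0}$ can be huge. A summable-shrinking repair (choose bounds $\epsilon_{i}$ with $\sum_{i}\epsilon_{i}\leq\epsilon$ and use subadditivity of the $L^{0}$ F-norm) works only when countably many pieces suffice, e.g.\ for $\Phi$ Fr\'echet (where $\Phi$ itself is Baire and no gluing is needed) or for countable inductive limits of Fr\'echet spaces; a general ultrabornological space is an inductive limit of Banach spaces over a possibly uncountable index set (unavoidably so even for non-normable Fr\'echet spaces), and there your argument does not close. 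Moreover, your proposed citation is inaccurate: the result underlying the regularization machinery of \cite{FonsecaMora:2018} is not a Banach--Steinhaus principle for pointwise bounded families, so you cannot ``cite it directly'' for the claimed uniform boundedness statement.

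What the paper actually does at this point is different in mechanism: it invokes Proposition 3.10 of \cite{FonsecaMora:2018}, which obtains the equicontinuity of $(X_{t}: t\in[0,T])$ from the continuity of each $X_{t}$ together with the existence of c\`adl\`ag versions of each $X(\phi)$, by applying a closed graph theorem (ultrabornological domain, complete metrizable codomain) to the \emph{single} linear map $\phi\mapsto X(\phi)$ into the space of c\`adl\`ag processes with the topology of uniform convergence in probability; uniformity in $t$ then comes from continuity into that path space, not from a uniform boundedness principle. This is the same pattern used in Proposition \ref{propCylSemimContOpeSpaceSemim}. To fix your proof, either cite and use that proposition as stated (which is exactly the paper's one-line argument), or restrict the Banach--Steinhaus route to the Baire or countable-inductive-limit setting and supply the summable-shrinking argument explicitly.
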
 
\begin{prf} If $\Phi$ is ultrabornological, the continuity of each $X_{t}$ implies the equicontinuity of the family $( X_{t}: t \in [0,T])$ for each $T>0$ (see \cite{FonsecaMora:2018}, Proposition 3.10). The corollary then follows from Theorem \ref{theoRegulCylinSemimartigalesNuclear}. 
\end{prf}

\begin{rema}\label{remaEquivalEquiconCond} Let $X=(X_{t}: t \geq 0)$ be a cylindrical process in $\Phi'$. The following statements are equivalent (see \cite{VakhaniaTarieladzeChobanyan}, Proposition IV.3.4):
\begin{enumerate}
\item For each $T>0$, the family of linear maps $( X_{t}: t \in [0,T])$ from $\Phi$ into $L^{0}\ProbSpace$ is equicontinuous (at the origin). 
\item For each $T > 0$, the Fourier transforms 
of the family $( X_{t}: t \in [0,T] )$ are equicontinuous (at the origin) in $\Phi$.
\end{enumerate}
Hence the statement of Theorem \ref{theoRegulCylinSemimartigalesNuclear} can be formulated in terms of Fourier transforms. 
\end{rema}

\begin{exam} Consider the space $\mathscr{D}(\R^{d}) \defeq C^{\infty}_{c}(\R^{d})$ of test functions and its dual $\mathscr{D}'(\R^{d})$ the space of distributions.  Let $z=(z_{t}: t \geq 0)$ be a $\R^{d}$-valued semimartingale. Following \cite{Ustunel:1982-1}, we can consider the following cylindrical processes in $\mathscr{D}'(\R^{d})$: for each $t \geq 0$, define the map $X_{t}: \mathscr{D}(\R^{d}) \rightarrow L^{0}\ProbSpace$ by 
$$ X_{t} (\phi)= \delta_{z_{t}}(\phi)=\phi(z_{t}), \quad \forall \, \phi \in \mathscr{D}(\R^{d}), $$
where $\delta_{x}$ denotes the Dirac measure at $x \in \R^{d}$. Each $X_{t}$ is continuous from $\mathscr{D}(\R^{d})$ into $L^{0}\ProbSpace$. Moreover, it is a consequence of It\^{o}'s formula that for each $\phi \in \mathscr{D}(\R^{d})$, $(\phi(z_{t}): t \geq 0) $ is a real-valued semimartingale. Then,  $(X_{t}: t\geq 0)$ satisfies the conditions in Corolary \ref{coroRegulCylSemiUltraborno}, and hence there exists a $\mathscr{D}'(\R^{d})$-valued c\`{a}dl\`{a}g semimartingale $(Y_{t}: t \geq 0)$ with Radon distributions such that $\forall \phi \in \mathscr{D}(\R^{d})$, $\Prob$-a.e. $\inner{Y_{t}}{\phi}=\phi(z_{t})$ for all $t \geq 0$. 
\end{exam}

\begin{exam}\label{examCylLevyRegulari}
Let $X=(X_{t}:t \geq 0)$ is a cylindrical L\'{e}vy process in $\Phi'$ such that or each $T>0$, the family of linear maps $( X_{t}: t \in [0,T])$ from $\Phi$ into $L^{0}\ProbSpace$ is equicontinuous (at the origin). Then its regularized $\Phi'$-valued semimartingale version $Y=(Y_{t}:t \geq 0)$ is indeed a $\Phi'$-valued c\`{a}dl\`{a}g L\'{e}vy process (see Theorem 3.8 in \cite{FonsecaMora:Levy}). Moreover, if $\Phi$ is a  barrelled space, and $L=(L_{t}:t \geq 0)$ is a $\Phi'$-valued L\'{e}vy process, the condition of equicontinuity of the family $( L_{t}: t \in [0,T])$ from $\Phi$ into $L^{0}\ProbSpace$ for all $T>0$ is always satisfied  (see Corollary 3.11 in \cite{FonsecaMora:Levy}). 
\end{exam}


If $X$ is a $\Phi'$-valued semimartingale satisfying the  equicontinuity condition in the statement of Theorem \ref{theoRegulCylinSemimartigalesNuclear}, then $X$ has a  c\`{a}dl\`{a}g semimartingale version. However, as the next result shows there is a large class of examples of nuclear spaces where every $\Phi'$-valued semimartingale always have a  c\`{a}dl\`{a}g semimartingale version. 

\begin{prop}\label{propCadlagVerSemimUltrabNuclear}
The conclusion of Theorem \ref{theoRegulCylinSemimartigalesNuclear} remains valid if $\Phi$ is an ultrabornological nuclear space and $X=(X_{t}: t \geq 0)$ is a $\Phi'$-valued semimartingale such that the probability distribution of each $X_{t}$ is a Radon measure.
\end{prop}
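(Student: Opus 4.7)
The plan is to reduce the proposition to Corollary \ref{coroRegulCylSemiUltraborno}: since $\Phi$ is already assumed to be ultrabornological and nuclear, it suffices to verify that for each fixed $t \geq 0$, the cylindrical random variable induced by $X_{t}$, namely the linear map $\phi \mapsto \inner{X_{t}}{\phi}$ from $\Phi$ into $L^{0}\ProbSpace$, is continuous at the origin. Once this is established, the hypotheses of Corollary \ref{coroRegulCylSemiUltraborno} apply to the cylindrical process induced by $X$, and its regularized c\`{a}dl\`{a}g version $Y$ must then be indistinguishable from $X$ modulo the regularization, giving all the desired conclusions.

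To prove this continuity I would use the Radon hypothesis on $\mu_{t} \defeq \Prob \circ X_{t}^{-1}$ together with the topological structure of $\Phi$. Fix $t \geq 0$ and $\varepsilon > 0$. By inner regularity of the Radon measure $\mu_{t}$ on $\Phi'_{\beta}$ there exists a compact set $K \subseteq \Phi'$ with $\Prob(X_{t} \notin K) < \varepsilon$. Since $K$ is compact, in particular it is bounded in $\Phi'_{\beta}$, so its polar
$$ K^{\circ} = \left\{ \phi \in \Phi \st \abs{\inner{f}{\phi}} \leq 1 \, \text{ for all } f \in K \right\} $$
is absolutely convex, absorbing, and closed in $\Phi$. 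As an ultrabornological space $\Phi$ is barrelled (see \cite{NariciBeckenstein}), so $K^{\circ}$ is a $0$-neighborhood in $\Phi$. Taking $U = \varepsilon K^{\circ}$, for every $\phi \in U$ and every $\omega \in \{X_{t} \in K\}$ we have $\abs{\inner{X_{t}(\omega)}{\phi}} \leq \varepsilon$, hence
$$ \Prob \bigl( \abs{\inner{X_{t}}{\phi}} > \varepsilon \bigr) \leq \Prob(X_{t} \notin K) < \varepsilon, \quad \forall \phi \in U. $$
This shows the induced map $\phi \mapsto \inner{X_{t}}{\phi}$ is continuous at $0$ into $L^{0}\ProbSpace$ with its topology of convergence in probability.

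With continuity in hand for every $t$, Corollary \ref{coroRegulCylSemiUltraborno} provides a weaker countably Hilbertian topology $\theta$ on $\Phi$ and a $(\widehat{\Phi}_{\theta})'$-valued c\`{a}dl\`{a}g semimartingale $Y$ whose induced cylindrical process is a version of the cylindrical process induced by $X$, i.e. $\inner{Y_{t}}{\phi} = \inner{X_{t}}{\phi}$ $\Prob$-a.s. for each $t$ and each $\phi \in \Phi$. Because $X$ is itself $\Phi'$-valued and its law at each $t$ is Radon, the identification of $Y_{t}$ with $X_{t}$ modulo null sets follows from the fact that Radon laws on $\Phi'$ are determined by their one-dimensional projections (\cite{FonsecaMora:2018}, Theorem 2.10), which also yields that $Y$ is a regular $\Phi'$-valued c\`{a}dl\`{a}g semimartingale version of $X$ that is unique up to indistinguishability, and continuous in $\Phi'$ whenever each $X(\phi)$ is continuous.

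The only non-routine step is the first one: extracting the $0$-neighborhood $K^{\circ}$ of $\Phi$ from the Radon hypothesis, for which the crucial ingredient is the passage from ``ultrabornological'' to ``barrelled,'' letting polars of bounded subsets of $\Phi'$ serve as neighborhoods of $0$ in $\Phi$. Everything else is a direct invocation of the previously stated regularization machinery.
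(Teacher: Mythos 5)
Your proposal is correct and follows essentially the same route as the paper: reduce to Corollary \ref{coroRegulCylSemiUltraborno} by showing that the Radon law of each $X_{t}$, together with the fact that an ultrabornological space is barrelled, yields continuity of each $X_{t}:\Phi \rightarrow L^{0}\ProbSpace$. The only difference is that you prove this continuity directly (via the polar of a compact set of full measure up to $\varepsilon$ being a barrel, hence a $0$-neighborhood), whereas the paper simply cites Theorem 2.10 of \cite{FonsecaMora:2018} for this step.
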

\begin{prf}
If the probability distribution of $X_{t}$ is a Radon measure, then by Theorem 2.10 in \cite{FonsecaMora:2018} and the fact that $\Phi$ being ultrabornological is also barrelled (see \cite{NariciBeckenstein}, Theorem 11.12.2) imply that each $X_{t}: \Phi \rightarrow L^{0} \ProbSpace$ is continuous. The result now follows from Corollary \ref{coroRegulCylSemiUltraborno}.  
\end{prf}

\begin{coro} \label{coroCadlagSemimarFrecNucleSpace}
If $\Phi$ is a Fr\'{e}chet nuclear space or the countable inductive limit of Fr\'{e}chet nuclear spaces, then each 
$\Phi'$-valued semimartingale $X=(X_{t}: t \geq 0)$ possesses a c\`{a}dl\`{a}g semimartingale version (unique up to indistinguishable versions).
\end{coro}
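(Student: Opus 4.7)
My plan is to derive this corollary directly from Proposition \ref{propCadlagVerSemimUltrabNuclear}. It suffices to verify two facts: that under either hypothesis the space $\Phi$ is ultrabornological nuclear, and that every $\Phi'$-valued random variable is automatically regular in the sense that its law is a Radon measure on $\Phi'$.

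For the first point, a Fréchet nuclear space is ultrabornological since every Fréchet space is ultrabornological (as the inductive limit of Banach spaces, cf. \cite{NariciBeckenstein}), and it remains nuclear by hypothesis. For a countable inductive limit $\Phi = \varinjlim \Phi_n$ of Fréchet nuclear spaces, the space is nuclear because countable inductive limits of nuclear spaces are nuclear (Example \ref{examNuclearSpaces} already recalls this), and it is ultrabornological because each $\Phi_n$ is so and ultrabornologicity is preserved under countable inductive limits. Thus in both cases $\Phi$ satisfies the hypothesis on the space in Proposition \ref{propCadlagVerSemimUltrabNuclear}.

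For the second point, I would argue that $\Phi'$ is a Suslin space so that every Borel probability measure on $\Phi'$ is automatically Radon. Indeed, if $\Phi$ is a separable Fréchet nuclear space, then $\Phi'$ (with the strong topology) is a DFN space, hence a Suslin space. In the inductive limit case $\Phi=\varinjlim \Phi_n$, the strong dual $\Phi'$ embeds continuously as a closed subspace into $\prod_n \Phi_n'$, which is a countable product of Suslin spaces and hence Suslin; therefore $\Phi'$ is again Suslin. Consequently, for each $t\geq 0$ the law of $X_t$ on $\Phi'$ is a Radon measure.

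With both conditions established, Proposition \ref{propCadlagVerSemimUltrabNuclear} applies directly to produce a $\Phi'$-valued c\`adl\`ag semimartingale version $Y=(Y_t:t\geq 0)$ of $X$, with uniqueness up to indistinguishability inherited from Theorem \ref{theoRegulCylinSemimartigalesNuclear}. The main subtlety in this plan is the Suslin/Radon argument for the inductive limit case; the Fréchet case is standard, but for $\Phi=\varinjlim \Phi_n$ one has to be careful that the strong dual (not merely the weak-$*$ dual) is Suslin, which is why I would invoke the continuous embedding of $\Phi'$ into the countable product $\prod_n \Phi_n'$ of Suslin spaces rather than try to argue intrinsically on $\Phi'$.
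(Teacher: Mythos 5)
Your reduction is the same as the paper's: you verify the hypotheses of Proposition \ref{propCadlagVerSemimUltrabNuclear} (ultrabornological nuclear space, Radon laws of $X_{t}$) and then invoke it, with uniqueness inherited from Theorem \ref{theoRegulCylinSemimartigalesNuclear}; the paper does exactly this, leaving the ultrabornological/nuclear check implicit (it is recorded in Example \ref{examNuclearSpaces}). The only genuine difference is how the Radon property is obtained: the paper simply cites Corollary 1.3 of Dalecky and Fomin \cite{DaleckyFomin}, which asserts that every Borel measure on $\Phi'_{\beta}$ is Radon for precisely these two classes of spaces, whereas you give a self-contained Suslin-space argument. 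Your argument is fine in the Fr\'{e}chet case (a nuclear Fr\'{e}chet space is automatically separable, and its strong dual is a countable union of compact metrizable polars, hence Suslin). In the inductive-limit case, however, the assertion that $\Phi'_{\beta}$ embeds as a closed subspace of $\prod_{n} (\Phi_{n})'_{\beta}$ needs one more word: continuity and injectivity of the restriction map, and closedness of its image, are easy, but you must know that the map is a homeomorphism onto its image, since Suslin-ness does not pass from a coarser to a finer topology. This amounts to the statement that every bounded subset of $\Phi$ is contained in, and bounded in, some step $\Phi_{n}$, i.e. regularity of the inductive limit (Dieudonn\'{e}--Schwartz for strict limits, and valid for countable inductive limits of Fr\'{e}chet--Schwartz, in particular nuclear Fr\'{e}chet, steps), which forces the strong topology on $\Phi'$ to coincide with the projective topology from the duals of the steps. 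With that point made explicit your proof is complete; citing Dalecky--Fomin, as the paper does, sidesteps this technicality.
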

\begin{prf}
If $\Phi$ is a Fr\'{e}chet nuclear space or a countable inductive limit of Fr\'{e}chet nuclear spaces, then every Borel measure on $\Phi'_{\beta}$ is a Radon measure (see Corollary 1.3 of Dalecky and Fomin \cite{DaleckyFomin}, p.11). In particular, for each $t \geq 0$ the probability distribution of $X_{t}$ is Radon. The result now follows from Proposition \ref{propCadlagVerSemimUltrabNuclear}. 
\end{prf}

Let $X=( X_{t}: t \geq 0)$ be a cylindrical semimartingale in $\Phi'$. Clearly $X$ induces a linear map $\phi \mapsto X(\phi)$ from $\Phi$ into $S^{0}$. The next result shows that the continuity of this map is equivalent to the equicontinuity condition in the statement of Theorem \ref{theoRegulCylinSemimartigalesNuclear}. 

\begin{prop} \label{propCylSemimContOpeSpaceSemim}
Let $X=( X_{t}: t \geq 0) $ be a cylindrical semimartingale in $\Phi'$. The following statements are equivalent:
\begin{enumerate} 
\item The linear mapping $X:\Phi \rightarrow S^{0}$, $\phi \mapsto X(\phi)$, is continuous. 
\item  For each $T>0$, the family of linear maps $( X_{t}: t \in [0,T])$ from $\Phi$ into $L^{0}\ProbSpace$ is equicontinuous (at the origin).
\end{enumerate}
If any of the above is satisfied, there exists exists a weaker countably Hilbertian topology $\theta$ on $\Phi$ such that $X$ extends to a continuous map from $\widehat{\Phi}_{\theta}$ into $S^{0}$. \end{prop}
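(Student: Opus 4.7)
The plan is to prove the two implications separately, and to notice that the extension to $\widehat{\Phi}_{\theta}$ will fall out of the converse direction automatically.

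For (1) $\Rightarrow$ (2), I would exploit the precise structure of Memin's seminorms $\abs{\cdot}_{k}$. Given $T > 0$, $t \in (0, T]$ and $s \in (0, t)$, the elementary integrand $h = \mathbbm{1}_{(s, t] \times \Omega}$ (with $a_{1} = 1$) lies in $\mathcal{E}_{1}$, and for $k \geq t$ one has $(h \cdot X(\phi))_{k} = X_{t}(\phi) - X_{s}(\phi)$, so that
\[
\Exp\!\left( 1 \wedge \abs{X_{t}(\phi) - X_{s}(\phi)} \right) \leq \abs{X(\phi)}_{k} \leq 2^{k} \abs{X(\phi)}_{0}.
\]
Since each $X(\phi)$ admits a \cadlag version (\cite{DellacherieMeyer}, VII.23), letting $s \downarrow 0$ and using dominated convergence produces the same bound with $X_{s}(\phi)$ replaced by $X_{0}(\phi)$, uniformly in $t \in [0,T]$. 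Combined with continuity of the initial-value map $\phi \mapsto X_{0}(\phi)$ into $L^{0}\ProbSpace$ (which is recorded by Memin's topology), equicontinuity of $(X_{t} : t \in [0, T])$ at the origin follows at once from the assumed continuity of $X$ into $S^{0}$.

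For (2) $\Rightarrow$ (1), the route is to apply Theorem \ref{theoRegulCylinSemimartigalesNuclear} to $X$, which yields a weaker countably Hilbertian topology $\theta$ on $\Phi$ and a $(\widehat{\Phi}_{\theta})'$-valued \cadlag semimartingale $Y$ with $\inner{Y_{t}}{\phi} = X_{t}(\phi)$ almost surely for every $\phi \in \Phi$ and $t \geq 0$. Since $(\widehat{\Phi}_{\theta})' = \bigcup_{n} \Phi'_{p_{n}}$, after possibly refining the sequence $(p_{n})$ one may arrange that $Y$ takes values in a single Hilbert-space dual $\Phi'_{p_{n}}$. I would then localize $Y$ by stopping times so that each stopped piece sits in $S^{p}(\Phi'_{p_{n}})$ for some $p \geq 1$, and invoke the Hilbert-space BDG inequality in the form $\norm{\inner{Y^{\tau}}{\phi}}_{S^{p}} \leq C_{p} \, p_{n}(\phi) \norm{Y^{\tau}}_{S^{p}(\Phi'_{p_{n}})}$ to obtain a bound linear in $p_{n}(\phi)$. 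Patching the localized $S^{p}$-estimates via the continuous embedding $S^{p} \hookrightarrow S^{0}$ yields continuity of the linear map $\phi \mapsto \inner{Y}{\phi}$ from $\widehat{\Phi}_{\theta}$ into $S^{0}$; restricting to $\Phi \subseteq \widehat{\Phi}_{\theta}$ gives (1), and the same map simultaneously supplies the continuous extension to $\widehat{\Phi}_{\theta}$ claimed in the final assertion.

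The main obstacle is the Hilbert-space-to-$S^{0}$ step inside the converse direction: the localization must be delicate enough to preserve $\phi$-linearity (so that a single stopping time works for all $\phi$) while being strong enough for the $S^{p}$-estimates to patch into convergence in the Emery topology. A secondary subtlety, in the direction (1) $\Rightarrow$ (2), is the treatment of the time-zero value $X_{0}$, which is not directly visible from the increment-based quantities $(h \cdot X(\phi))_{k}$ and which must be handled either through the accepted convention that $S^{0}$ records the initial value or through a short supplementary argument.
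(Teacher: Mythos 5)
The serious gap is in your (2) $\Rightarrow$ (1) step. After invoking Theorem \ref{theoRegulCylinSemimartigalesNuclear} you assert that, ``after possibly refining the sequence $(p_{n})$'', the regularized process $Y$ takes values in a single Hilbert space dual $\Phi'_{p_{n}}$, and you then localize and apply $S^{p}(\Phi'_{p_{n}})$-estimates. Neither claim is available. The regularization theorem only gives $Y_{t}(\omega)\in(\widehat{\Phi}_{\theta})'=\bigcup_{n}\Phi'_{p_{n}}$ with the index depending on $(\omega,t)$; what one actually gets is a seminorm $\varrho=\varrho(\omega,T)$, random and depending on the time horizon, such that the path lies in $\Phi'_{\varrho}$ on $[0,T]$ (see Remark 3.9 in \cite{FonsecaMora:2018} and compare Theorem \ref{theoRegulCylSemIntegBouVari}). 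A single nonrandom Hilbert space is obtained in this paper only under additional integrability hypotheses, via factorization of a nuclear map through a Banach space (Theorem \ref{theoRegMartingales}, Theorem \ref{theoRegulSpCylinSemim}); for a general cylindrical semimartingale this reduction is precisely what cannot be run, and even if the values did lie in one $\Phi'_{p_{n}}$ you would still have to prove that $Y$ is a semimartingale \emph{in that Hilbert space} before $S^{p}(\Phi'_{p_{n}})$ and your BDG-type bound make sense — that is a nontrivial radonification statement, not a refinement of seminorms. The paper's proof avoids all of this: since $Y$ is $(\widehat{\Phi}_{\theta})'$-valued and $\Phi$ is dense in $\widehat{\Phi}_{\theta}$, $Y$ defines a linear map $\widehat{\Phi}_{\theta}\rightarrow S^{0}$; the continuity of each $Y_{t}$ on $\widehat{\Phi}_{\theta}$ makes this map closed, and because $\widehat{\Phi}_{\theta}$ is ultrabornological and $S^{0}$ is a complete metrizable topological vector space, the closed graph theorem (\cite{NariciBeckenstein}, Theorem 14.7.3) gives continuity on $\widehat{\Phi}_{\theta}$, hence on $\Phi$, and this same map is the extension claimed in the final assertion. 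You should replace your Hilbert-space reduction by such a closed graph argument.

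Your (1) $\Rightarrow$ (2) computation with $h=\mathbbm{1}_{(s,t]\times\Omega}$ correctly controls the increments $X_{t}(\phi)-X_{s}(\phi)$, but the time-zero term is not a cosmetic point: the elementary integrands in $\mathcal{E}_{1}$ start strictly after $0$, so the quantities $\abs{\cdot}_{k}$ as written do not see $X_{0}(\phi)$, and your parenthetical ``which is recorded by Memin's topology'' is exactly what needs justification. The paper handles this direction by quoting Memin's Remarque II.2: convergence in $S^{0}$ implies uniform convergence in probability on compact time intervals, so continuity of $X:\Phi\rightarrow S^{0}$ yields at once the equicontinuity of $(X_{t}: t\in[0,T])$ (as in the proof of Lemma 3.7 in \cite{FonsecaMora:2018}). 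Either cite that fact or supply the supplementary argument for the initial value that you yourself flag as missing.
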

\begin{prf} We first prove $(1) \Rightarrow (2)$. Observe that because convergence in $\mathcal{S}^{0}$ under the metric $d$ implies uniform convergence in probability on compact subsets in $\R_{+}$ (see \cite{Memin:1980}, Remarque II.2), then the continuity of the mapping $X:\Phi \rightarrow S^{0}$ implies that for each $T>0$, the family of mappings $( X_{t}: t \in [0,T])$ from $\Phi$ into $L^{0} \ProbSpace$ is equicontinuous (see the proof of Lemma 3.7 in \cite{FonsecaMora:2018}).

Now we will prove $(2) \Rightarrow (1)$ and the last implication in the statement of Proposition \ref{propCylSemimContOpeSpaceSemim}. Let $\theta$ be a weaker countably Hilbertian topology on $\Phi$ and a $(\widehat{\Phi}_{\theta})'$-valued c\`{a}dl\`{a}g process $Y= ( Y_{t} : t \geq 0)$ as in the conclusion of Theorem \ref{theoRegulCylinSemimartigalesNuclear}. Because for each $\phi \in \Phi$, $\inner{Y}{\phi}= ( \inner{Y_{t}}{\phi} : t \geq 0)$ is a version of $X(\phi)= ( X_{t}(\phi) : t \geq 0) \in S^{0}$, and because $\Phi$ is dense in $\widehat{\Phi}_{\theta}$, then $Y$ defines a linear map from $\widehat{\Phi}_{\theta}$ into $S^{0}$. Furthermore, the continuity for each $t\geq 0$ of $Y_{t}$ on $\widehat{\Phi}_{\theta}$ easily implies that $Y$ is a closed linear map from $\widehat{\Phi}_{\theta}$ into $S^{0}$. But because $\widehat{\Phi}_{\theta}$ is ultrabornological and $S^{0}$ is a complete, metrizable, topological vector space, the closed graph theorem (see \cite{NariciBeckenstein}, Theorem 14.7.3, p.475) implies that the map defined by $Y$ is continuous from $\widehat{\Phi}_{\theta}$ into $S^{0}$. However, because for each $\phi \in \Phi$ we have $\inner{Y}{\phi}=X(\phi)$ in $S^{0}$, then $X$ extends to a continuous map from $\widehat{\Phi}_{\theta}$ into $S^{0}$. Furthermore, because the inclusion from $\Phi$ into $\widehat{\Phi}_{\theta}$ is linear and continuous, then $X:\Phi \rightarrow S^{0}$ is continuous.      
\end{prf}

If in the proof of Proposition \ref{propCylSemimContOpeSpaceSemim} we use Corollary \ref{coroRegulCylSemiUltraborno} instead of Theorem \ref{theoRegulCylinSemimartigalesNuclear} we obtain the following conclusion: 

\begin{prop}\label{coroCylSemimContOpeSpaceSemimUltra} If $\Phi$ is an ultrabornological nuclear space, we can replace (2) in Proposition \ref{propCylSemimContOpeSpaceSemim} by the assumption that each $X_{t}:\Phi \rightarrow L^{0}\ProbSpace$ is continuous (at the origin).
\end{prop}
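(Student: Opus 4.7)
The plan is to rerun the proof of Proposition \ref{propCylSemimContOpeSpaceSemim} essentially verbatim, with the single modification that Corollary \ref{coroRegulCylSemiUltraborno} is invoked in place of Theorem \ref{theoRegulCylinSemimartigalesNuclear}. In the ultrabornological setting the corollary delivers the regularized càdlàg version $Y$ under pointwise continuity of each $X_t:\Phi \to L^{0}\ProbSpace$, so the weaker hypothesis is already strong enough to run the same machinery.

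For the direction $(1) \Rightarrow$ (each $X_t$ continuous), nothing changes from the earlier argument: convergence in $S^{0}$ under Memin's metric implies uniform convergence in probability on compacts of $\R_{+}$ (\cite{Memin:1980}, Remarque II.2), so continuity of $X:\Phi \to S^{0}$ at the origin yields continuity at the origin of each evaluation $X_t:\Phi \to L^{0}\ProbSpace$.

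For the reverse implication, I would apply Corollary \ref{coroRegulCylSemiUltraborno} to obtain a weaker countably Hilbertian topology $\theta$ on $\Phi$ and a $(\widehat{\Phi}_{\theta})'$-valued càdlàg semimartingale $Y=(Y_{t}:t \geq 0)$ such that $\inner{Y}{\phi}$ is a version of $X(\phi)$ for every $\phi \in \Phi$. Density of $\Phi$ in $\widehat{\Phi}_{\theta}$, together with the continuity of each $Y_{t}$ on $\widehat{\Phi}_{\theta}$, extends $\phi \mapsto \inner{Y}{\phi}$ to a linear map $\widehat{\Phi}_{\theta} \to S^{0}$. This extended map is closed: if $\phi_{n} \to \phi$ in $\widehat{\Phi}_{\theta}$ and $\inner{Y}{\phi_{n}} \to z$ in $S^{0}$, then evaluating at any $t$ gives convergence in $L^{0}\ProbSpace$ and the continuity of $Y_{t}$ on $\widehat{\Phi}_{\theta}$ forces $z_{t}=\inner{Y_{t}}{\phi}$ almost surely. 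Since $\widehat{\Phi}_{\theta}$ is complete and pseudo-metrizable, hence ultrabornological, and $S^{0}$ is a complete metrizable topological vector space, the closed graph theorem (\cite{NariciBeckenstein}, Theorem 14.7.3) gives continuity of the extension $\widehat{\Phi}_{\theta} \to S^{0}$. Composing with the continuous inclusion $\Phi \hookrightarrow \widehat{\Phi}_{\theta}$ yields continuity of $X:\Phi \to S^{0}$, proving (1), and simultaneously exhibits the continuous extension asserted in the final sentence of Proposition \ref{propCylSemimContOpeSpaceSemim}.

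I do not expect any genuine obstacle: the only nontrivial step is the closedness check, which is identical to the one in Proposition \ref{propCylSemimContOpeSpaceSemim}, and the burden of upgrading pointwise continuity of the $X_t$ to the equicontinuity statement used there is entirely discharged by Corollary \ref{coroRegulCylSemiUltraborno}, whose proof in turn rests on the fact (Proposition 3.10 of \cite{FonsecaMora:2018}) that on an ultrabornological space pointwise continuous families into $L^{0}\ProbSpace$ are automatically equicontinuous on bounded time intervals.
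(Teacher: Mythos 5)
Your proposal is correct and coincides with the paper's own argument: the paper proves this proposition precisely by rerunning the proof of Proposition \ref{propCylSemimContOpeSpaceSemim} with Corollary \ref{coroRegulCylSemiUltraborno} substituted for Theorem \ref{theoRegulCylinSemimartigalesNuclear}, which is exactly your plan, including the unchanged closed-graph step on $\widehat{\Phi}_{\theta}$. No gaps.
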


\begin{rema}\label{remaContMapCylSemiLCS} The implication $(1) \Rightarrow (2)$ in Proposition \ref{propCylSemimContOpeSpaceSemim} remains true if $\Phi$ is only a locally convex space (see the proof of Lemma 3.7 in \cite{FonsecaMora:2018}).
\end{rema}

\subsection{Regularization of Some Classes of Cylindrical Semimartingales}

In this section we study how the results obtained in the above section specialize when we restrict our attention to cylindrical local martingales and cylindrical processes of finite variation. We start with the following regularization result that follows easily from Theorem \ref{theoRegulCylinSemimartigalesNuclear}. 

\begin{prop}\label{propRegulaLocalMarBoundVaria} If in 
Theorem \ref{theoRegulCylinSemimartigalesNuclear} or in Corollary \ref{coroRegulCylSemiUltraborno} the cylindrical process $X$ is a cylindrical local martingale (resp. a cylindrical process of finite variation), then the regularized version $Y$ of $X$ is a local martingale (resp. a process of finite variation).  
\end{prop}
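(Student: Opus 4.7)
The plan is to reduce the statement to the definition: by Definition \ref{defiSemimartingale} (in its analogue for local martingales and finite variation processes), the $\Phi'$-valued process $Y$ is a local martingale (resp.\ of finite variation) if and only if for every $\phi \in \Phi$ the real-valued process $\inner{Y}{\phi}=( \inner{Y_{t}}{\phi}: t \geq 0)$ is a real-valued local martingale (resp.\ process of finite variation). So everything boils down to transferring the martingale / finite variation property from $X(\phi)$ to its c\`adl\`ag version $\inner{Y}{\phi}$.

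First I would apply Theorem \ref{theoRegulCylinSemimartigalesNuclear} (or Corollary \ref{coroRegulCylSemiUltraborno}) to obtain the $(\widehat{\Phi}_{\theta})'$-valued c\`adl\`ag semimartingale $Y$ such that, for each fixed $\phi \in \Phi$, $\inner{Y}{\phi}$ is a version of $X(\phi)$. In particular $\inner{Y}{\phi}$ is c\`adl\`ag.

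Next, suppose $X$ is a cylindrical local martingale. Fix $\phi \in \Phi$. Since the filtration satisfies the usual conditions, the real-valued local martingale $X(\phi)$ admits a c\`adl\`ag modification, say $\tilde{X}(\phi)$, and $\tilde{X}(\phi)$ is still a local martingale. Both $\inner{Y}{\phi}$ and $\tilde{X}(\phi)$ are c\`adl\`ag modifications of $X(\phi)$, hence are modifications of each other, and two c\`adl\`ag processes that are modifications are indistinguishable. Therefore $\inner{Y}{\phi}$ is indistinguishable from a local martingale, so it is itself a local martingale. As $\phi$ was arbitrary, $Y$ is a $\Phi'$-valued local martingale by Definition \ref{defiSemimartingale}. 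The finite variation case is handled identically: if each $X(\phi)$ has $\Prob$-a.s.\ paths of locally bounded variation, then $\inner{Y}{\phi}$, being indistinguishable from a c\`adl\`ag modification of $X(\phi)$, has $\Prob$-a.s.\ paths of locally bounded variation, and hence is a real-valued finite variation process; adaptedness for $\inner{Y}{\phi}$ is automatic since $Y$ itself is $(\mathcal{F}_{t})$-adapted.

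There is no real obstacle here; the whole content is that (i) the regularization theorem has already produced a c\`adl\`ag $Y$ that is a version of $X$, and (ii) the class of c\`adl\`ag local martingales (resp.\ c\`adl\`ag finite variation processes) is closed under indistinguishability, which follows immediately from the usual conditions on the filtration. The only point worth spelling out is therefore the indistinguishability argument that upgrades ``$\inner{Y}{\phi}$ is a version of $X(\phi)$'' to ``$\inner{Y}{\phi}$ inherits the pathwise structural properties of $X(\phi)$,'' after which Definition \ref{defiSemimartingale} closes the argument.
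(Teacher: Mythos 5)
Your proposal is correct and is essentially the argument the paper has in mind: the paper states this proposition without proof as an immediate consequence of Theorem \ref{theoRegulCylinSemimartigalesNuclear}, the content being exactly your observation that $\inner{Y}{\phi}$ is a c\`adl\`ag version of the local martingale (resp.\ finite variation process) $X(\phi)$, hence indistinguishable from a c\`adl\`ag local martingale (resp.\ inherits the pathwise finite variation property), so the conclusion follows from Definition \ref{defiSemimartingale}.
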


\begin{rema}
If $X=( X_{t}: t \geq 0) $ is a cylindrical local martingale (resp. a cylindrical process of finite variation) satisfying condition (2) in Proposition \ref{propCylSemimContOpeSpaceSemim}, then it is not true in general that $X$ defines a continuous operator from $\Phi$ into the space of real-valued local martingales $\mathcal{M}_{loc}$ (resp. of real-valued processes of finite variation $\mathcal{V}$). This is a consequence of the fact that $\mathcal{M}_{loc}$ and $\mathcal{V}$ are not closed subspaces of $S^{0}$ (see \cite{Emery:1979}). 
\end{rema}

Regularization of cylindrical martingales were studied by the author in \cite{FonsecaMora:2018}. Observe that if the cylindrical martingale has $n$-moments, for $n \geq 2$, we obtain a regularized version taking values in some Hilbert space $\Phi'_{q}$. 

\begin{theo}[\cite{FonsecaMora:2018}, Theorem 5.2] \label{theoRegMartingales} 
Let $X=( X_{t}:t\geq 0)$ be a cylindrical martingale in $\Phi'$ such that for each $t \geq 0$ the map $X_{t}: \Phi \rightarrow L^{0} \ProbSpace$ is continuous. Then, $X$ has a $\Phi'$-valued c\`{a}dl\`{a}g version $Y=( Y_{t} : t\geq 0)$. Moreover, we have the following:
\begin{enumerate}
\item If $X$ is $n$-th integrable with $n \geq 2$, then for each $T>0$ there exists a continuous Hilbertian semi-norm $q_{T}$ on $\Phi$ such that $( Y_{t} : t \in [0, T])$ is a $\Phi'_{q_{T}}$-valued c\`{a}dl\`{a}g martingale satisfying $\Exp \left( \sup_{t \in [0, T] } q'_{T} ( Y_{t} )^{n} \right) < \infty$.  
\item Moreover, if for $n \geq 2$, $\sup_{t \geq 0} \Exp \left( \abs{X_{t}(\phi)}^{n} \right)< \infty$ for each $\phi \in \Phi$, then there exists a continuous Hilbertian semi-norm $q$ on $\Phi$ such that $Y$ is a $\Phi'_{q}$-valued c\`{a}dl\`{a}g martingale satisfying $\Exp \left( \sup_{t \geq 0}  q'( Y_{t} )^{n} \right) < \infty$.    
\end{enumerate}
If for each $ \phi \in \Phi$ the real-valued process $( X_{t}(\phi) : t \geq 0)$ has a continuous version, then $Y$ can be chosen to be continuous and such that it satisfies \emph{(1)}$-$\emph{(2)} above replacing the property c\`{a}dl\`{a}g by continuous. 
\end{theo}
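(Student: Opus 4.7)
The plan is to leverage Theorem \ref{theoRegulCylinSemimartigalesNuclear} and Proposition \ref{propRegulaLocalMarBoundVaria} to obtain a $(\widehat{\Phi}_{\theta})'$-valued c\`{a}dl\`{a}g martingale version $Y$ of $X$, and then use nuclearity together with Doob's $L^{n}$ maximal inequality to upgrade $Y$ to a Hilbert-space-valued martingale with the claimed moment bounds. To invoke the regularization theorem I first need to check equicontinuity of $(X_{t}: t \in [0,T])$. For each $\phi$, $|X(\phi)|$ is a real-valued submartingale, and applying Doob to the bounded submartingales $|X_{s}(\phi)| \wedge K$ followed by $K \to \infty$ gives the $L^{0}$ maximal inequality: $X_{T}(\phi_{n}) \to 0$ in probability implies $\sup_{t \in [0,T]} |X_{t}(\phi_{n})| \to 0$ in probability. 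Combined with the hypothesised continuity of $X_{T}$ on $\Phi$, this yields equicontinuity of $(X_{t}: t \in [0,T])$ for every $T>0$. Theorem \ref{theoRegulCylinSemimartigalesNuclear} and Proposition \ref{propRegulaLocalMarBoundVaria} then produce a c\`{a}dl\`{a}g $(\widehat{\Phi}_{\theta})'$-valued (hence regular, $\Phi'$-valued) local martingale version $Y$; since every $X(\phi)$ is a true martingale, the uniqueness clause in Theorem \ref{theoRegulCylinSemimartigalesNuclear} upgrades $Y$ to a genuine martingale.

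For part (1), fix $T>0$. Because $X_{T}:\Phi \to L^{0}\ProbSpace$ is continuous and $n$-integrable, the closed graph theorem yields continuity $X_{T}:\Phi \to L^{n}\ProbSpace$, so there is a continuous Hilbertian seminorm $p$ on $\Phi$ with $\Exp(|X_{T}(\phi)|^{n}) \leq p(\phi)^{n}$. By nuclearity, choose $q_{T} \geq p$ with $i_{p,q_{T}}:\Phi_{q_{T}} \to \Phi_{p}$ Hilbert-Schmidt and pick a sequence $(\phi_{k}) \subset \Phi$ whose images under $i_{q_{T}}$ form an orthonormal basis of $\Phi_{q_{T}}$, so that $\sum_{k} p(\phi_{k})^{2} = \norm{i_{p,q_{T}}}_{\mathcal{L}_{2}(\Phi_{q_{T}},\Phi_{p})}^{2} < \infty$. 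Parseval in $\Phi'_{q_{T}}$ reads $q'_{T}(f)^{2}=\sum_{k}\inner{f}{\phi_{k}}^{2}$; applying it to $f=Y_{t}$, using the submartingale property of $|X(\phi_{k})|^{n}$ (valid since $n \geq 2$) and Minkowski's inequality in $L^{n/2}$, one bounds $\Exp(q'_{T}(Y_{t})^{n})$ by a constant depending only on $\norm{i_{p,q_{T}}}_{\mathcal{L}_{2}(\Phi_{q_{T}},\Phi_{p})}$, uniformly in $t \in [0,T]$. This forces $Y_{t} \in \Phi'_{q_{T}}$ a.s.\ and identifies $(Y_{t}: t \in [0,T])$ as a $\Phi'_{q_{T}}$-valued c\`{a}dl\`{a}g martingale; Doob's $L^{n}$ maximal inequality in the Hilbert space $\Phi'_{q_{T}}$ then delivers $\Exp(\sup_{t \in [0,T]} q'_{T}(Y_{t})^{n}) \leq C_{n}\Exp(q'_{T}(Y_{T})^{n}) < \infty$.

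Part (2) follows from the same scheme applied to the seminorm $\phi \mapsto (\sup_{t \geq 0} \Exp|X_{t}(\phi)|^{n})^{1/n}$: lower semicontinuity together with a Banach-Steinhaus argument (using barrelledness of $\widehat{\Phi}_{\theta}$) dominates it by a continuous Hilbertian $p$, producing a single $q$ good for all $t \geq 0$. Doob on each $[0,N]$ and monotone convergence as $N \to \infty$ then give the global bound. Path continuity in the final claim follows because Theorem \ref{theoRegulCylinSemimartigalesNuclear} preserves continuity. The step I expect to be the main obstacle is the identification $Y_{t} \in \Phi'_{q_{T}}$ itself: the moment bound gives a.s.\ finiteness of the Parseval sum, but promoting this to strong $\Phi'_{q_{T}}$-valued measurability (so that Doob's Hilbert-valued $L^{n}$ inequality applies) requires a careful Fatou/monotone-convergence argument on the partial sums $\sum_{k \leq N}\inner{Y_{t}}{\phi_{k}}^{2}$, together with a modification argument that selects a $\Phi'_{q_{T}}$-valued c\`{a}dl\`{a}g version out of the a priori only $(\widehat{\Phi}_{\theta})'$-valued process.
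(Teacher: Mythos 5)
The paper itself does not prove this statement; it imports it verbatim from \cite{FonsecaMora:2018} (Theorem 5.2), so your argument has to stand on its own terms, and on those terms it has a genuine gap at the very first step. Your route to equicontinuity of $(X_{t}: t\in[0,T])$ rests on the claim that, for martingales, $X_{T}(\phi_{n})\to 0$ in probability forces $\sup_{t\le T}\abs{X_{t}(\phi_{n})}\to 0$ in probability, obtained by applying Doob to the ``bounded submartingales'' $\abs{X_{s}(\phi)}\wedge K$. Both the derivation and the claim are wrong: $x\mapsto x\wedge K$ is concave, so $\abs{X(\phi)}\wedge K$ is not a submartingale (if it were, your claim would indeed follow, and the claim is false). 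Concretely, take $U$ uniform on $[0,1]$, $\mathcal{F}_{t}$ trivial for $t<1/2$, $\mathcal{F}_{t}=\sigma(\mathbbm{1}_{\{U\le 1/2\}})$ for $1/2\le t<1$, $\mathcal{F}_{1}=\sigma(U)$, and the martingales closed by $\xi_{n}=n\bigl(\mathbbm{1}_{\{U\le 1/(2n)\}}-\mathbbm{1}_{\{1/2<U\le 1/2+1/(2n)\}}\bigr)$: then $\xi_{n}\to 0$ in probability while $\Exp\left(\xi_{n}\,\middle|\,\mathcal{F}_{1/2}\right)=\pm 1$ a.s. Doob's maximal inequality controls the running maximum by $\Exp\abs{X_{T}(\phi)}$, i.e. by $L^{1}$-continuity of $X_{T}$, which $L^{0}$-continuity does not provide; note also that your argument never uses the assumed continuity of $X_{t}$ for $t<T$, which is precisely the hypothesis that excludes such pathologies. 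The workable mechanism (used in the cited source, and consistent with how the present paper treats the ultrabornological case in Corollary \ref{coroRegulCylSemiUltraborno}) is different: the martingale property is used only to get, for each fixed $\phi$, a.s. finiteness of $\sup_{t\le T}\abs{X_{t}(\phi)}$, hence pointwise boundedness in $L^{0}\ProbSpace$ over a countable dense set of times; the countably many continuous maps are then transferred to the completion $\widehat{\Phi}_{\theta}$ under a weaker countably Hilbertian topology, which is Baire and ultrabornological, and equicontinuity is obtained there by a uniform-boundedness/closed-graph argument. Equicontinuity is not a consequence of continuity of $X_{T}$ alone.

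Once equicontinuity is granted, your treatment of (1)--(2) (Hilbert--Schmidt embedding, Parseval, Minkowski in $L^{n/2}$, Doob's $L^{n}$ inequality in $\Phi'_{q_{T}}$) is the right skeleton and close in spirit to the source. Two further soft spots: the closed graph theorem cannot be invoked directly on a general nuclear $\Phi$ (it need not be barrelled or ultrabornological) to upgrade $L^{0}$- to $L^{n}$-continuity of $X_{T}$ --- this should be done on $\widehat{\Phi}_{\theta}$, where it is legitimate; and producing a c\`{a}dl\`{a}g $\Phi'_{q_{T}}$-valued modification (rather than only a.s. finiteness of $q_{T}'(Y_{t})$ at each fixed $t$) requires the extra step you only gesture at: in practice one enlarges $q_{T}$ by one more Hilbert--Schmidt embedding and runs Doob's inequality along a countable dense set of times before passing to limits. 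So the moment-bound part is salvageable, but the equicontinuity step as written fails and must be replaced by the Baire-category argument on $\widehat{\Phi}_{\theta}$.
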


\begin{exam}
The following simple example illustrates the usefulness of Theorem \ref{theoRegMartingales}. Let $B=(B_{t}: t \geq 0)$ denotes a real-valued Brownian motion. For every $t \geq 0$ define 
$$X_{t}(\phi)= \int_{0}^{t} \phi(s) dB_{s}, \quad \forall \phi \in \mathcal{S}(\R). $$
From the properties of the It\^{o} stochastic integral we have that $X=(X_{t}: t \geq 0)$ is a cylindrical square integrable continuous martingale in the space of tempered distributions $\mathcal{S}'(\R)$. Moreover, from It\^{o} isometry we have $ \Exp \abs{X_{t}(\phi)}^{2} = \norm{\mathbbm{1}_{[0,t]} \phi}^{2}_{L^{2}(\R)}$ $\forall \phi \in \mathcal{S}(\R)$ 
and since the canonical inclusion from $\mathcal{S}(\R)$ into $L^{2}(\R)$ is continuous, it follows that each $X_{t}: \mathcal{S}(\R) \rightarrow L^{0}\ProbSpace$ is also continuous. Theorefore, Theorem \ref{theoRegMartingales} shows that $X$ has a version $Y=(Y_{t}: t \geq 0)$ that is a square integrable continuous martingale in the space of tempered distributions $\mathcal{S}'(\R)$. 

We can learn more about $X$ if we analyse further some families of norms on $\mathcal{S}(\R)$. For every $p \in \R$, define on $\mathcal{S}(\R)$ the norm:
$$ \norm{\phi}^{2}_{p}=\sum_{n=1}^{\infty} \left( n+\frac{1}{2} \right)^{2p} \inner{\phi}{\phi_{n}}^{2}_{L^{2}(\R)}, $$ 
where the sequence of Hermite functions $(\phi_{n}: n=1,2, \dots)$ is defined as:
$$ \phi_{n+1}(x)=\sqrt{g(x)} \, h_{n}(x), \quad n=0,1,2, \dots, $$
for $\displaystyle{g(x)=(\sqrt{2\pi})^{-1} \exp(-x^{2}/2)}$ and where $(h_{n}: n=0,1,2, \dots)$ is the sequence of Hermite polynomials:
$$h_{n}(x)=\frac{(-1)^{n}}{\sqrt{n!}} g(x)^{-1} \frac{d^{n}}{dx^{n}} g(x), \quad n=0,1,2, \dots. $$
It is known (see e.g. Theorem 1.3.2 in  \cite{KallianpurXiong}) that the topology in $\mathcal{S}(\R)$ is generated by the increasing sequence of Hilbertian norms $(\norm{\cdot}_{p}: p=0,1,\dots)$. Moreover, if $\mathcal{S}_{p}$ denotes the completion of $S(\R)$ when equipped with the norm $\norm{\cdot}_{p}$, then it follows that $\mathcal{S}_{p}'=\mathcal{S}_{-p}$. 

Then since $\sup_{t \geq 0} \Exp \abs{X_{t}(\phi)}^{2} =  \norm{ \phi}^{2}_{L^{2}(\R)}< \infty$ $\forall \phi \in \mathcal{S}(\R)$, Theorem \ref{theoRegMartingales}(2) shows that there exists some $p \in \N$ such that $Y$ is a continuous square integrable martingale in $\mathcal{S}_{-p}$ satisfying $\Exp \left( \sup_{t \geq 0}   \norm{Y_{t}}_{-p}^{2}  \right) < \infty$. 
\end{exam}


In the next result we study regularization for cylindrical processes of locally integrable variation. Observe that in this case one can obtain a regularized version with paths that on a bounded time interval are of bounded variation in some Hilbert space $\Phi'_{\rho}$. 
We denote by $\mathcal{A}$ the Banach space of real-valued processes of integrable variation $a=(a_{t}: t \geq 0)$ equipped with the norm of expected total variation $\norm{a}_{\mathcal{A}}= \Exp \int^{\infty}_{0} \abs{d a_{t}}$. Similarly, $\mathcal{A}_{loc}$ denotes the linear space of real-valued predictable processes of finite variation with locally integrable variation, equipped with the topology of uniform convergence in probability in the total variation on every compact interval $[0,T]$ of $\R_{+}$. 

\begin{theo} \label{theoRegulCylSemIntegBouVari}
Let $\tilde{A}=(\tilde{A}_{t}: t \geq 0)$ be a cylindrical process of locally integrable variation, i.e. such that $\tilde{A}(\phi) \in \mathcal{A}_{loc}$ for each $\phi \in \Phi$. Assume further that for each $T>0$, the family of linear maps $( \tilde{A}_{t}: t \in [0,T])$ from $\Phi$ into $L^{0}\ProbSpace$ is equicontinuous (at the origin). Then, the cylindrical process $\tilde{A}$ has a $\Phi'$-valued regular c\`{a}dl\`{a}g version $A=(A_{t}: t \geq 0)$ (unique up to indistinguishable versions) satisfying that for every $\omega \in \Omega$ and $T>0$, there exists a continuous Hilbertian semi-norm $\varrho=\varrho(\omega,T)$ on $\Phi$ such that the map $t \mapsto A_{t}(\omega)$ defined on $[0,T]$ has bounded variation  in $\Phi'_{\varrho}$. 
\end{theo}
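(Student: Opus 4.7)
The plan is to combine the general regularization theorem with a Hilbert--Schmidt tensorization argument that exploits the local integrability of the variation. Since each $\tilde{A}(\phi) \in \mathcal{A}_{loc}$ is in particular a real-valued c\`adl\`ag semimartingale, Theorem \ref{theoRegulCylinSemimartigalesNuclear} together with Proposition \ref{propRegulaLocalMarBoundVaria} delivers a regular c\`adl\`ag $\Phi'$-valued version $A$ of $\tilde{A}$, unique up to indistinguishability, which is of finite variation cylindrically and takes its values in $(\widehat{\Phi}_{\theta})'$ for some weaker countably Hilbertian topology $\theta$ on $\Phi$ generated by an increasing sequence $(p_{n})$ of continuous Hilbertian semi-norms. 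The remaining (and main) content is the pathwise Hilbertian bounded variation statement.

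Fix $T>0$ and introduce the subadditive, positively homogeneous map
\[
W : \Phi \to L^{0} \ProbSpace, \qquad W(\phi) \defeq \int_{0}^{T} \abs{d \tilde{A}_{s}(\phi)}.
\]
I first aim to show that $W$ is continuous at the origin. The hypothesis $\tilde{A}(\phi) \in \mathcal{A}_{loc}$ is decisive: equipped with its natural topology of uniform convergence in probability of the total variation on compacts, $\mathcal{A}_{loc}$ is a complete metrizable topological vector space, and the linear map $\tilde{A} : \Phi \to \mathcal{A}_{loc}$ has closed graph --- if $\phi_{n} \to \phi$ and $\tilde{A}(\phi_{n}) \to a$ in $\mathcal{A}_{loc}$, then equicontinuity forces $\tilde{A}_{t}(\phi_{n}) \to \tilde{A}_{t}(\phi)$ in $L^{0}$ for each $t$, so $a = \tilde{A}(\phi)$. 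Appealing to the closed graph theorem in the F-space setting, after continuously extending $\tilde{A}$ through the ultrabornological completion $\widehat{\Phi}_{\theta}$, produces a continuous Hilbertian semi-norm $p$ on $\Phi$ such that $\{W(\phi) : p(\phi) \leq 1\}$ is bounded in $L^{0}$.

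To pass from $L^{0}$-boundedness to an almost-sure pointwise domination, I use nuclearity to pick continuous Hilbertian semi-norms $p \leq q \leq \varrho$ with $i_{p,q}$ and $i_{q,\varrho}$ Hilbert--Schmidt, so that $i_{p,\varrho}$ is trace class; an orthonormal basis $(\psi_{j})_{j \in \N}$ of $\Phi_{\varrho}$ can be chosen with representatives in $\Phi$ and $\sum_{j} p(\psi_{j}) < \infty$. A Borel--Cantelli argument along $(\psi_{j})$, combined with the positive homogeneity of $W$ (and, if needed, a further composition of Hilbert--Schmidt embeddings so that the basis decay dominates the tail estimates from the $L^{0}$-boundedness on the $p$-ball), yields a finite random variable $K=K(\omega,T)$ with
\[
\int_{0}^{T} \abs{d \inner{A_{s}(\omega)}{\phi}} \leq K \, p(\phi), \qquad \forall \, \phi \in \Phi,
\]
almost surely. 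The elementary estimate $\varrho'(f) = \bigl(\sum_{j} \abs{\inner{f}{\psi_{j}}}^{2}\bigr)^{1/2} \leq \sum_{j} \abs{\inner{f}{\psi_{j}}}$, inserted into the supremum defining the total variation in $\Phi'_{\varrho}$, and a Fubini-type exchange of summations, then yield
\[
\sup_{\pi} \sum_{i} \varrho'(A_{t_{i+1}}(\omega) - A_{t_{i}}(\omega)) \leq \sum_{j} \int_{0}^{T} \abs{d \inner{A_{s}(\omega)}{\psi_{j}}} \leq K \sum_{j} p(\psi_{j}) < \infty,
\]
which is the claimed bounded variation of $A_{\cdot}(\omega)|_{[0,T]}$ in $\Phi'_{\varrho}$. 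The principal obstacle is the continuity of the non-linear map $W$ into $L^{0}$: equicontinuity of $(\tilde{A}_{t})_{t \in [0,T]}$ controls only pointwise values of $\tilde{A}(\phi)$, whereas the total variation is sensitive to oscillations that $L^{0}$-equicontinuity alone cannot see, and the $\mathcal{A}_{loc}$ hypothesis together with the closed graph theorem in F-spaces is precisely what bridges this gap.
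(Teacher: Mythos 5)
Your first step (regularization via Theorem \ref{theoRegulCylinSemimartigalesNuclear} and Proposition \ref{propRegulaLocalMarBoundVaria}, then continuity of the extended map into $\mathcal{A}_{loc}$ via a closed graph argument, using that $\mathcal{A}_{loc}$ is closed in $S^{0}$) matches the paper's opening moves. The gap is in what you extract from this continuity. First, continuity of a linear map from the metrizable, generally non-normable space $\widehat{\Phi}_{\theta}$ into $\mathcal{A}_{loc}$ does \emph{not} produce a single continuous Hilbertian semi-norm $p$ whose unit ball has $L^{0}$-bounded image under $W(\phi)=\int_{0}^{T}\abs{d\tilde{A}_{s}(\phi)}$: continuity only gives, for each neighborhood of zero in $\mathcal{A}_{loc}$, some semi-norm ball mapped into it, and the semi-norm changes with the neighborhood (exactly as the identity on a non-normable Fr\'{e}chet space maps no ball to a bounded set). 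Second, and more seriously, even granting $L^{0}$-boundedness on a fixed $p$-ball, boundedness in probability only yields tail constants $C_{\epsilon}$ with $\sup_{p(\phi)\leq 1}\Prob(W(\phi)>C_{\epsilon})<\epsilon$ and \emph{no control on the growth of $C_{\epsilon}$ as $\epsilon\to 0$}. Your Borel--Cantelli along an orthonormal basis $(\psi_{j})$ of $\Phi_{\varrho}$ then needs $\sum_{j}\Prob\left(W(\psi_{j})>C\,p(\psi_{j})\right)<\infty$ for a single $C$, or at least $\sum_{j}C_{\epsilon_{j}}p(\psi_{j})<\infty$ with $\sum_{j}\epsilon_{j}<\infty$; but composing finitely many Hilbert--Schmidt embeddings only guarantees polynomial decay of $p(\psi_{j})$ (e.g.\ in $\mathscr{S}(\R)$ the inter-norm embeddings have polynomially decaying singular values), which cannot dominate an arbitrarily fast-growing sequence $C_{\epsilon_{j}}$. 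So the proposed ``further composition of Hilbert--Schmidt embeddings'' cannot be guaranteed to close the argument, and the claimed almost sure bound $\int_{0}^{T}\abs{d\inner{A_{s}(\omega)}{\phi}}\leq K\,p(\phi)$ with a \emph{deterministic} semi-norm $p$ is in fact stronger than the theorem's conclusion (where $\varrho=\varrho(\omega,T)$ is random) and is not established.

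This is precisely the difficulty the paper's proof is designed to overcome: from the continuity into $\mathcal{A}_{loc}$ it derives (following Badrikian--\"{U}st\"{u}nel) the partition-uniform Fourier estimate \eqref{equiconContMapBoundVariation} with an additive error $\epsilon$ and a quadratic semi-norm term, then uses Gaussian randomization together with Hilbert--Schmidt embeddings $i_{p_{n},q_{n}}$ to bound
$$\Prob\left( \sup_{q_{n}(\phi)\leq 1}\int_{0}^{T}\abs{dA_{t}(\phi)}>C\right)\leq \frac{\sqrt{e}}{\sqrt{e}-1}\left[\epsilon_{n}+\frac{2}{C^{2}}\norm{i_{p_{n},q_{n}}}^{2}_{\mathcal{L}_{2}(\Phi_{q_{n}},\Phi_{p_{n}})}\right],$$
lets $C\to\infty$, and applies Borel--Cantelli over $n$ with $\sum_{n}\epsilon_{n}<\infty$; this yields an $\omega$-dependent semi-norm $q(\omega,T)$ whose unit ball carries a finite supremum of the total variation. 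The passage to bounded variation in a Hilbert norm is then done with the Pietsch domination theorem applied to the $1$-summing dual embedding $i'_{q,\varrho}$ (your cruder bound $\varrho'(f)\leq\sum_{j}\abs{\inner{f}{\psi_{j}}}$ would be an acceptable substitute only if the summability you are missing were available). To repair your argument you would essentially have to reproduce this probabilistic control of the supremum over a ball; the closed graph theorem plus $L^{0}$-boundedness alone will not do it.
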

\begin{prf} 
First, from Proposition \ref{propRegulaLocalMarBoundVaria} there exists a weaker countably Hilbertian topology $\theta$ on $\Phi$, and a $(\widehat{\Phi}_{\theta})'$-valued  adapted c\`{a}dl\`{a}g process $A=(A_{t}: t \geq 0)$ such that $\Prob$-a.e. $\inner{A_{t}}{\phi}=\tilde{A}_{t}(\phi)$ $\forall t \geq 0$, $\phi \in \Phi$. From the above equality it follows that for each $\phi \in \Phi$, $\inner{A}{\phi} \in \mathcal{A}_{loc}$. Therefore, from Proposition \ref{propCylSemimContOpeSpaceSemim} and since $\mathcal{A}_{loc}$ is a closed subspace of $S^{0}$ (see \cite{Memin:1980}, Th\'{e}or\`{e}me IV.7), the process $A$ defines a linear and continuous map from $\widehat{\Phi}_{\theta}$ into $\mathcal{A}_{loc}$. We will use the above to show that the paths of $A$ satisfy the bounded variation properties indicated in the statement of the theorem.  We will benefit from ideas taken from \cite{BadrikianUstunel:1996}. 

Fix $T>0$. For every $\epsilon >0$, using the linearity and continuity of the map $A$ from $\widehat{\Phi}_{\theta}$ into $\mathcal{A}_{loc}$, and by following similar arguments to those used in the proof of Lemma 3.3 in \cite{BadrikianUstunel:1996} there exists a $\theta$-continuous Hilbertian seminorm $p$ on $\Phi$, such that 
\begin{equation} \label{equiconContMapBoundVariation}
\sup_{\Delta} \Exp \left[ \sup_{\norm{y}_{l_{\Delta}^{\infty}} \leq 1} \abs{1-e^{i\inner{A^{\Delta}(\phi)}{y}}} \right] \leq \epsilon + 2 p(\phi)^{2}, \quad \forall \, \phi \in \Phi, 
\end{equation} 
where the $\sup$ is taken with respect to an increasing sequence of finite partitions $\Delta$ of $[0,T]$ in such a way that the supremum can be attained as a monotone limit. Let us explain the notation used in \eqref{equiconContMapBoundVariation}. 
If $\Delta=\{ 0=t_{0} < t_{1}< \dots < t_{n}=T\}$ is a finite partition of $[0,T]$, $A^{\Delta}$ denotes the linear and continuous map:
$$ \phi \mapsto A^{\Delta}(\phi) \defeq (A(\phi)(t_{1})-A(\phi)(t_{0}), \dots, A(\phi)(t_{n})-A(\phi)(t_{n-1})), $$
from $\widehat{\Phi}_{\theta}$ into $L^{0}(l^{1}_{\Delta})$, where $l^{1}_{\Delta}$ denotes the space $\R^{n}$ equipped with the $l^{1}$-norm $\norm{y}_{l_{\Delta}^{1}}=\sum_{k=1}^{n} \abs{y_{k}}$ for $y=(y_{1}, \dots, y_{n}) \in \R^{n}$. Recall that in $\R^{n}$ the dual norm to the $l^{\infty}$-norm $\norm{y}_{l_{\Delta}^{\infty}}=\max_{1 \leq k \leq n} \abs{y_{k}}$ for $y=(y_{1}, \dots, y_{n}) \in \R^{n}$ is the $l^{1}$-norm, i.e. we have that  $\norm{x}_{l_{\Delta}^{1}}=\sup \{\abs{\inner{x}{y}}: \norm{y}_{l_{\Delta}^{\infty}} \leq 1 \}$, where $\inner{\cdot}{\cdot}$ denotes the scalar product in $\R^{n}$ and $l^{\infty}_{\Delta}$ denotes the space $\R^{n}$ equipped with the $l^{\infty}$-norm.  

Let $( p_{m}: m \in \N)$ be an increasing sequence of continuous Hilbertian seminorms on $\Phi$ generating the topology $\theta$. Since $\Phi$ is a nuclear space, we can find and increasing sequence of continuous Hilbertian seminorms $( q_{m}: m \in \N)$ on $\Phi$ such that $\forall m \in \N$, $p_{m} \leq q_{m}$, and the inclusion $i_{p_{m},q_{m}}$ is Hilbert-Schmidt. We denote by $\alpha$ the countably Hilbertian topology on $\Phi$ generated by the seminorms $( q_{m}: m \in \N)$. By construction, the topology $\alpha$ is finer than $\theta$. 
 
Let $( \epsilon_{n}: n \in \N)$ be a sequence of positive real numbers such that $\sum_{n} \epsilon_{n}< \infty$. Then, there exists a subsequence $( p_{m_{n}}: n \in \N)$ of $( p_{m}: m \in \N)$ such that for each $n \in \N$, $\epsilon_{n}$ and $p_{m_{n}}$ satisfy \eqref{equiconContMapBoundVariation}. 
To keep the notation simple, we will denote $p_{m_{n}}$ by $p_{n}$ and the corresponding $q_{m_{n}}$ by $q_{n}$.

Let $C>0$ and let $\Delta$ be any member of the increasing sequence of finite partitions of $[0,T]$ for which the supremum in \eqref{equiconContMapBoundVariation} is attained. Let $(\phi^{q_{n}}_{j}: j \in \N) \subseteq \Phi$ be a complete orthonormal system in $\Phi_{q_{n}}$. Then, similar arguments to those used in the proof of Lemma 3.8 in \cite{FonsecaMora:2018} shows that 

\begin{flalign*}
& \Prob \left( \sup_{q_{n}(\phi) \leq 1} \norm{ A^{\Delta}(\phi) }_{l^{1}_{\Delta}} > C \right) & \\
& = \Prob \left( \sup_{q_{n}(\phi) \leq 1} \sup_{\norm{y}_{l^{\infty}_{\Delta}} \leq 1} \abs{\inner{A^{\Delta}(\phi)}{y}} > C \right) & \\
& \leq \frac{\sqrt{e}}{\sqrt{e}-1} \Exp \left[ 1- \exp \left\{ -\frac{1}{2C^{2}} \sup_{q_{n}(\phi) \leq 1} \sup_{\norm{y}_{l^{\infty}_{\Delta}} \leq 1} \inner{A^{\Delta}(\phi)}{y}^{2} \right\}  \right] & \\
& = \lim_{m \rightarrow \infty} \frac{\sqrt{e}}{\sqrt{e}-1} \Exp  \left[ \sup_{\norm{y}_{l^{\infty}_{\Delta}} \leq 1}  \left( 1- \exp \left\{ -\frac{1}{2C^{2}} \sum_{j=1}^{m}  \inner{A^{\Delta}(\phi^{q_{n}}_{j})}{y}^{2} \right\} \right) \right]  & \\
& \leq \lim_{m \rightarrow \infty} \frac{\sqrt{e}}{\sqrt{e}-1} 
\int_{\R^{m}} \Exp  \left[ \sup_{\norm{y}_{l^{\infty}_{\Delta}} \leq 1} \,  \abs{ 1- \exp \left\{ i \sum_{j=1}^{m} z_{j}  \inner{A^{\Delta}(\phi^{q_{n}}_{j})}{y} \right\} } \right]  \otimes_{j=1}^{m} N_{C} (dz_{j}),  
\end{flalign*}
where $N_{C}$ denotes the centred Gaussian measure on $\R$ with variance $1/C^{2}$. Now, from \eqref{equiconContMapBoundVariation} the last term can be majorated by
\begin{flalign*}
& \lim_{m \rightarrow \infty} \frac{\sqrt{e}}{\sqrt{e}-1} 
\int_{\R^{m}}  \left[ \epsilon_{n} + 2p_{n} \left( \sum_{j=1}^{m} z_{j}\phi^{q_{n}}_{j} \right)^{2}  \right]  \otimes_{j=1}^{m} N_{C} (dz_{j})   & \\   
& \leq  \lim_{m \rightarrow \infty} \frac{\sqrt{e}}{\sqrt{e}-1} \left[ \epsilon_{n} + \frac{2}{C^{2}}  \sum_{j=1}^{m}p_{n}\left(\phi^{q_{n}}_{j} \right)^{2}  \right]  & \\
& = \frac{\sqrt{e}}{\sqrt{e}-1}   \left[ \epsilon_{n} + \frac{2}{C^{2}}  \norm{ i_{p_{n}, q_{n}} }^{2}_{\mathcal{L}_{2}(\Phi_{q_{n}}, \Phi_{p_{n}}) } \right].   
\end{flalign*}  
From the above bound we have
\begin{eqnarray*}
\Prob \left( \sup_{q_{n}(\phi) \leq 1} \int^{T}_{0} \abs{d A_{t}(\phi)} >C \right) 
& = & \Prob \left( \sup_{q_{n}(\phi) \leq 1} \sup_{\Delta} \norm{ A^{\Delta}(\phi) }_{l^{1}_{\Delta}} > C \right) \\
& = & \sup_{\Delta} \Prob \left( \sup_{q_{n}(\phi) \leq 1} \norm{ A^{\Delta}(\phi) }_{l^{1}_{\Delta}} > C \right) \\
& \leq & \frac{\sqrt{e}}{\sqrt{e}-1}   \left[ \epsilon_{n} + \frac{2}{C^{2}}  \norm{ i_{p_{n}, q_{n}} }^{2}_{\mathcal{L}_{2}(\Phi_{q_{n}}, \Phi_{p_{n}}) } \right],   
\end{eqnarray*}
where, as before, the $\sup$ is taken with respect to an increasing sequence of finite partitions of $[0,T]$ for which the supremum in \eqref{equiconContMapBoundVariation} is attained. 

Then, by taking limit as $C \rightarrow \infty$ we get that 
\begin{equation} \label{lowBoundPartProbBoundVari}
\Prob \left( \sup_{q_{n}(\phi) \leq 1} \int^{T}_{0} \abs{d A_{t}(\phi)} < \infty \right)  
\geq 1 - \frac{\sqrt{e}}{\sqrt{e}-1} \epsilon_{n}.  
\end{equation}
Now, if for every $ n \in \N$ we take 
$$\Lambda_{n} = \left\{ \sup_{q_{n}(\phi) \leq 1} \int^{T}_{0} \abs{d A_{t}(\phi)} < \infty \right\} \quad \mbox{and} \quad  \Omega_{T} = \bigcup_{N \in \N} \bigcap_{n \geq N} \Lambda_{n},$$
it follows from  \eqref{lowBoundPartProbBoundVari}, our assumption $\sum_{n} \epsilon_{n} < \infty$, and the Borel-Cantelli lemma that $\Prob (\Omega_{T})=1$.  

Now, recall  that the process $A$ was obtained from regularization of the cylindrical process $\tilde{A}$. It is a consequence of the construction of this regularized version (see Remark 3.9 in \cite{FonsecaMora:2018}) that there exists $\Gamma\subseteq \Omega$ with $\Prob (\Gamma)=1$ such that for all $\omega \in \Gamma$, for every $t > 0$ there exists a $\theta$-continuous Hilbertian semi-norm $p=p(\omega,t)$ on $\Phi$ such that the map $s \mapsto A_{s}(\omega)$ is c\`{a}dl\`{a}g from $[0,t]$ into the Hilbert space $\Phi'_{p}$. 

If $\omega \in \Gamma \cap \Omega_{T}$, it follows from the construction of the set $\Omega_{T}$ that  there exists an $\alpha$-continuous Hilbertian seminorm $q=q(\omega,T)$ on $\Phi$, with $p \leq q$,  and such that $\sup_{q(\phi) \leq 1} \int^{T}_{0} \abs{d A_{t} (\phi)(\omega)} < \infty $. The above clearly implies that $\phi \mapsto \inner{i'_{p,q} A (\omega)}{\phi}$ is a linear and continuous mapping from $\Phi_{q}$ into the Banach space $BV([0,T])$ of real-valued functions that are of bounded variation on $[0,T]$ equipped with the total variation norm. 

Let $\varrho$ be a continuous Hilbertian semi-norm on $\Phi$ such that $q \leq \varrho$ and $i_{q,\varrho}$ is Hilbert-Schmidt. Then, the dual operator $i'_{q, \varrho}$ is Hilbert-Schmidt and hence is $1$-summing (see \cite{DiestelJarchowTonge},  Corollary 4.13, p.85). Then, from the Pietsch domination theorem (see \cite{DiestelJarchowTonge}, Theorem 2.12, p.44) there exists a constant $C>0$, and a Radon probability measure $\nu$ on the unit ball $B^{*}_{q}(1)$ of $\Phi_{q}$ (equipped with the weak topology) such that, 
\begin{equation}\label{integralInequalitySummingOperator}
\varrho'(i'_{q,\varrho} f ) \leq C  \int_{B^{*}_{q}(1)} \abs{\inner{f}{\phi}} \nu(d\phi), \quad \forall \, f \in \Phi'_{q}.    
\end{equation}
Then, the continuity of the map $\phi \mapsto \inner{A (\omega) \circ i_{p,q}}{\phi}$ and  \eqref{integralInequalitySummingOperator} implies that 
\begin{eqnarray*}
\sum_{\Delta} \varrho' \left( i'_{p,\varrho} A_{t_{i+1}}(\omega) -i'_{p,\varrho} A_{t_{i}}(\omega) \right) 
& = & \sum_{\Delta} \varrho' \left( i'_{q,\varrho} \left( i'_{p,q} A_{t_{i+1}}(\omega) -i'_{p,q} A_{t_{i}}(\omega) \right) \right) \\
& \leq  & C  \int_{B^{*}_{q}(1)} \sum_{\Delta} \abs{\inner{i'_{p,q} A_{t_{i+1}}(\omega) -i'_{p,q} A_{t_{i}}(\omega)}{\phi}} \nu(d\phi) \\
& \leq & \norm{ i_{p, q} }_{\mathcal{L}_{2}(\Phi_{q}, \Phi_{p}) } \sup_{q(\phi) \leq 1} \int^{T}_{0} \abs{d A_{t}(\phi)} < \infty. 
\end{eqnarray*}
The above bound is uniform on $\Delta$, therefore
$$  \sup_{\Delta} \sum_{\Delta} \varrho' \left( i'_{p,\varrho} A_{t_{i+1}}(\omega) -i'_{p,\varrho} A_{t_{i}}(\omega) \right) < \infty, $$
and hence $A$ is a $\Phi'$-valued version of $\tilde{A}$ whose paths satisfy the properties on the statement of the theorem. 
\end{prf}

We finalize this section by studying regularization of $S^{p}$-cylindrical semimartingales. The next results is a generalization of Theorem III.1 in  \cite{Ustunel:1982}. 

\begin{theo} \label{theoRegulSpCylinSemim}
Let $X=(X_{t}: t \geq 0)$ be a $S^{p}$-cylindrical semimartingale $(1 \leq p < \infty)$, i.e. such that $X(\phi) \in S^{p}$ for each $\phi \in \Phi$. Assume further that for each $T>0$, the family of linear maps $( X_{t}: t \in [0,T])$ from $\Phi$ into $L^{0}\ProbSpace$ is equicontinuous (at the origin). Then, there exists a continuous Hilbertian seminorm $q$ on $\Phi$ such that  $X$ has a c\`{a}dl\`{a}g version $Y=(Y_{t}: t \geq 0)$ that is a $S^{p}$-semimartingale in $\Phi'_{q}$. Moreover, $Y$ is unique up to indistinguishable versions as a $\Phi'$-valued process.  
\end{theo}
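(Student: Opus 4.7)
The plan is to produce the $\Phi'$-valued càdlàg version by invoking Theorem \ref{theoRegulCylinSemimartigalesNuclear}, then to upgrade continuity of the induced map $X:\Phi\to S^{0}$ to continuity into the Banach space $S^{p}$, and finally to use nuclearity of $\Phi$ to factor through a single Hilbert space $\Phi_{q}$ and realise the version in $\Phi'_{q}$.

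First, the hypotheses are exactly those of Theorem \ref{theoRegulCylinSemimartigalesNuclear}, which produces a $\Phi'$-valued regular càdlàg semimartingale version $Y=(Y_{t}:t\geq 0)$ of $X$, unique up to indistinguishable versions. Proposition \ref{propCylSemimContOpeSpaceSemim} further gives continuity of $X:\Phi\to S^{0}$ and a continuous extension $\widetilde{X}:\widehat{\Phi}_{\theta}\to S^{0}$ for some weaker countably Hilbertian topology $\theta$.

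The critical step is to show that $X:\Phi\to S^{p}$ is continuous. Since the norm $\norm{\cdot}_{S^{p}}$ dominates the metric $\abs{\cdot}_{0}$ on $S^{0}$, the inclusion $S^{p}\hookrightarrow S^{0}$ is continuous, and so the graph of $X:\Phi\to S^{p}$ is closed: if $\phi_{n}\to \phi$ in $\Phi$ and $X(\phi_{n})\to z$ in $S^{p}$, then $X(\phi_{n})\to X(\phi)$ in $S^{0}$ by continuity and $X(\phi_{n})\to z$ in $S^{0}$ by the inclusion, forcing $X(\phi)=z$. The main obstacle is that $\Phi$ is only nuclear, so the closed graph theorem does not apply directly. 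To get around this I would enlarge the domain to the ultrabornological space $\widehat{\Phi}_{\theta}$: using density of $\Phi$ in $\widehat{\Phi}_{\theta}$, completeness of the Banach space $S^{p}$, and an approximation/Cauchy argument exploiting the equicontinuity of $(X_{t}:t\in[0,T])$ together with the explicit form of $\norm{\cdot}_{S^{p}}$ as an infimum over decompositions $x=m+a$, I would show $\widetilde{X}(\widehat{\Phi}_{\theta})\subseteq S^{p}$ and that $\widetilde{X}:\widehat{\Phi}_{\theta}\to S^{p}$ has closed graph. The closed graph theorem applied to $\widehat{\Phi}_{\theta}\to S^{p}$, an ultrabornological domain into a Fréchet target, then yields continuity; restricting to $\Phi$ finishes this step. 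This is the hard part of the proof.

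Once continuity of $X:\Phi\to S^{p}$ is established, $\phi\mapsto\norm{X(\phi)}_{S^{p}}$ is a continuous seminorm on the nuclear space $\Phi$, so there exists a continuous Hilbertian seminorm $p'$ with $\norm{X(\phi)}_{S^{p}}\leq p'(\phi)$ for all $\phi\in\Phi$. By nuclearity I can choose a continuous Hilbertian seminorm $q$ with $p'\leq q$ such that the inclusion $i_{p',q}:\Phi_{q}\to\Phi_{p'}$ is Hilbert-Schmidt. Taking a complete orthonormal basis $(\phi_{j}^{q})\subseteq\Phi$ of $\Phi_{q}$ and denoting by $((\phi_{j}^{q})^{*})$ the dual basis in $\Phi'_{q}$, I would define the candidate
\begin{equation*}
Y_{t}=\sum_{j}X_{t}(\phi_{j}^{q})\,(\phi_{j}^{q})^{*},
\end{equation*}
and use the Hilbert-Schmidt summability $\sum_{j}p'(\phi_{j}^{q})^{2}<\infty$ together with the bound $\norm{X(\phi_{j}^{q})}_{S^{p}}\leq p'(\phi_{j}^{q})$ to show that this series converges in $S^{p}(\Phi'_{q})$ to a $\Phi'_{q}$-valued càdlàg semimartingale. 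Through the continuous inclusion $i'_{q}:\Phi'_{q}\hookrightarrow\Phi'$ this process coincides with the regular $\Phi'$-valued version of the first step, and uniqueness up to indistinguishability as a $\Phi'$-valued process then follows from the uniqueness clause of Theorem \ref{theoRegulCylinSemimartigalesNuclear}.
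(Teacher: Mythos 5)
The decisive step of your plan is the last one, and it has a genuine gap. From the Hilbert--Schmidt choice of $q$ you only retain the coordinate data $\norm{X(\phi^{q}_{j})}_{S^{p}}\leq p'(\phi^{q}_{j})$ with $\sum_{j}p'(\phi^{q}_{j})^{2}<\infty$, i.e. an $\ell^{2}$ bound on the $S^{p}$-norms of the coordinates, and you claim this forces convergence of $\sum_{j}X(\phi^{q}_{j})\,(\phi^{q}_{j})^{*}$ in $S^{p}(\Phi'_{q})$. That inference is false as stated. Writing near-optimal decompositions $X(\phi^{q}_{j})=m^{j}+a^{j}$, the $S^{p}(\Phi'_{q})$-norm of a block $\sum_{j=m}^{n}X(\phi^{q}_{j})(\phi^{q}_{j})^{*}$ involves the total variation in $\Phi'_{q}$ of $\sum_{j}a^{j}(\phi^{q}_{j})^{*}$ and the bracket of $\sum_{j}m^{j}(\phi^{q}_{j})^{*}$; if the $a^{j}$ move on disjoint time intervals the vector-valued variation is the $\ell^{1}$-sum $\sum_{j}\int\abs{da^{j}}$, which an $\ell^{2}$ bound on $\norm{\int\abs{da^{j}}}_{L^{p}\ProbSpace}$ does not control, so the partial sums need not be Cauchy (and for $1\leq p<2$ the martingale part is not controlled either). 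Radonifying a cylindrical finite-variation or semimartingale part is exactly the delicate point: in Theorem \ref{theoRegulCylSemIntegBouVari} the paper needs two Hilbert--Schmidt embeddings plus Pietsch domination, so a single Hilbert--Schmidt step with only coordinatewise bounds cannot be expected to suffice. (Your ``critical step'' is also thinner than it looks: showing $\widetilde{X}(\widehat{\Phi}_{\theta})\subseteq S^{p}$ by an unspecified ``approximation/Cauchy argument'' presupposes precisely the $S^{p}$-bound by a continuous seminorm that you are trying to establish; the paper simply runs the closed-graph argument of Proposition \ref{propCylSemimContOpeSpaceSemim} with target the Banach space $S^{p}$, the graph being closed through the continuous embedding $S^{p}\hookrightarrow S^{0}$, as you observe.)

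The paper avoids the convergence problem by a different mechanism, which is the key idea you are missing: once $X:\Phi\rightarrow S^{p}$ is continuous, nuclearity of $\Phi$ and the fact that $S^{p}$ is Banach imply that the map $X$ is a \emph{nuclear operator}, hence admits a representation $X=\sum_{i}\lambda_{i}F_{i}\otimes x^{i}$ with $(\lambda_{i})\in l^{1}$, $(F_{i})\subseteq\Phi'$ equicontinuous (so contained in some polar $B_{q}(1)^{0}$, which fixes the seminorm $q$), and $(x^{i})$ bounded in $S^{p}$. Choosing decompositions $x^{i}=m^{i}+a^{i}$ with $\norm{[m^{i},m^{i}]_{\infty}^{1/2}+\int_{0}^{\infty}\abs{da^{i}_{s}}}_{L^{p}\ProbSpace}\leq\norm{x^{i}}_{S^{p}}+\epsilon$ and setting $M_{t}=\sum_{i}\lambda_{i}F_{i}m^{i}_{t}$, $A_{t}=\sum_{i}\lambda_{i}F_{i}a^{i}_{t}$, the $\ell^{1}$ weights $\lambda_{i}$ make these series absolutely convergent and (following \"{U}st\"{u}nel) yield a $\Phi'_{q}$-valued c\`{a}dl\`{a}g local martingale and a process of $p$-integrable variation, so $Y=M+A$ is the desired $S^{p}$-semimartingale in $\Phi'_{q}$. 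To repair your proof you would either have to invoke this nuclear factorization (which replaces your $\ell^{2}$ control by $\ell^{1}$ control) or supply a genuinely new radonification argument using the full bound $\norm{X(\phi)}_{S^{p}}\leq p'(\phi)$, not just its values on an orthonormal basis.
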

\begin{prf} 
First, a closed graph theorem argument similar to the one used in the proof of Proposition \ref{propCylSemimContOpeSpaceSemim} shows that $X$ defines a continuous and linear operator from $\Phi$ into $S^{p}$. Since $\Phi$ is a nuclear space and $S^{p}$ is Banach, the map $X: \Phi \rightarrow S^{p}$ is nuclear and then it has a representation (see \cite{Schaefer}, Theorems III.7.1-2): 
$$ X=\sum_{i=1}^{\infty} \lambda_{i} F_{i} \otimes x^{i} , $$
where $(\lambda_{i}) \in l^{1}$, $(F_{i}) \subseteq \Phi'$ equicontinuous, and $(x^{i}) \subseteq S^{p}$ bounded. Choose any $\epsilon>0$, and for each $i$ a local martingale $m^{i}$ and a process of integrable variation $a^{i}$ such that $x^{i}=m^{i}+a^{i}$ and 
$$  \norm{ [m^{i},m^{i}]_{\infty}^{1/2} + \int_{0}^{\infty} \abs{d a^{i}_{s} } }_{L^{p}\ProbSpace} <  \norm{x^{i}}_{S^{p}} + \epsilon. $$
Now, since $(F_{i})$ is equicontinuous and $\Phi$ is nuclear, there exists a continuous Hilbertian seminorm $q$ on $\Phi$ such that $(F_{i}) \subseteq B_{q}(1)^{0}$, where $B_{q}(1)^{0}$ denotes the polar set of the unit ball $B_{q}(1)$ of $q$. Define 
$$ M_{t}(\omega)= \sum_{i=1}^{\infty} \lambda_{i} F_{i}  \, m_{t}^{i}(\omega), \quad \forall \, t \geq 0, \, \omega \in \Omega,$$
and 
$$ A_{t}(\omega)= \sum_{i=1}^{\infty} \lambda_{i} F_{i} \,  a_{t}^{i}(\omega), \quad \forall \, t \geq 0, \, \omega \in \Omega.$$
Then, following similar arguments to those used in the proof of Theorem III.1 in  \cite{Ustunel:1982}) shows that $M=(M_{t}: t\geq 0)$ defines a $\Phi'_{q}$-valued c\`{a}dl\`{a}g local martingale with $\Exp ( \sup_{t \geq 0} q'(M_{t})^{p}) < \infty$, and that $A=(A_{t}: t\geq 0)$ defines a $\Phi'_{q}$-valued c\`{a}dl\`{a}g process of $p$-integrable variation. Hence, if we define $Y=(Y_{t}: t \geq 0)$ by $Y_{t}=M_{t}+A_{t}$ $\forall t \geq 0$, then $Y$ is is a $\Phi'_{q}$-valued c\`{a}dl\`{a}g  $S^{p}$-semimartingale. Moreover, it is clear from the definition of $Y$ that it is a version of $X$. 
\end{prf}

\begin{coro}
If $\Phi$ is an ultrabornological nuclear space, the result in Theorem \ref{theoRegulSpCylinSemim} is valid if we only assume that each $X_{t}:\Phi \rightarrow L^{0}\ProbSpace$ is continuous (at the origin).
\end{coro}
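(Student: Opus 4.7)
The plan is to follow the same template as Corollary \ref{coroRegulCylSemiUltraborno}: upgrade the pointwise continuity hypothesis to the equicontinuity hypothesis required by Theorem \ref{theoRegulSpCylinSemim}, exploiting the ultrabornological structure of $\Phi$. Once this upgrade is in place, nothing further is needed beyond a direct appeal to Theorem \ref{theoRegulSpCylinSemim}.

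First I would fix $T>0$ and consider the family of linear maps $(X_{t}:t\in[0,T])$ from $\Phi$ into $L^{0}\ProbSpace$. By assumption each $X_{t}$ is continuous at the origin. Because $\Phi$ is ultrabornological, Proposition 3.10 in \cite{FonsecaMora:2018}---the very same ingredient invoked in the proof of Corollary \ref{coroRegulCylSemiUltraborno}---promotes this pointwise continuity to equicontinuity of the whole family $(X_{t}:t\in[0,T])$ at the origin. This step is essentially automatic and is the only place where the ultrabornological hypothesis intervenes.

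With that equicontinuity in hand, the hypotheses of Theorem \ref{theoRegulSpCylinSemim} are fully satisfied: $X$ is an $S^{p}$-cylindrical semimartingale, and the families $(X_{t}:t\in[0,T])$ are equicontinuous for each $T>0$. I would then apply that theorem to extract the continuous Hilbertian seminorm $q$ on $\Phi$ and the $\Phi'_{q}$-valued c\`{a}dl\`{a}g $S^{p}$-semimartingale version $Y=(Y_{t}:t\geq 0)$, unique up to indistinguishability as a $\Phi'$-valued process, which is exactly the conclusion required.

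I do not anticipate any genuine obstacle: the corollary is a clean hypothesis-relaxation parallelling Corollary \ref{coroRegulCylSemiUltraborno} word-for-word, with Theorem \ref{theoRegulSpCylinSemim} replacing Theorem \ref{theoRegulCylinSemimartigalesNuclear}. The only minor verification is that Proposition 3.10 of \cite{FonsecaMora:2018} is stated at a level of generality that applies verbatim to families of continuous linear maps from an ultrabornological nuclear space into the complete metrizable topological vector space $L^{0}\ProbSpace$, which is indeed the case.
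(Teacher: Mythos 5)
Your argument is correct and is exactly the one the paper intends: the corollary is stated without proof precisely because, as in Corollary \ref{coroRegulCylSemiUltraborno}, Proposition 3.10 of \cite{FonsecaMora:2018} upgrades continuity of each $X_{t}$ to equicontinuity of $(X_{t}: t \in [0,T])$ when $\Phi$ is ultrabornological, after which Theorem \ref{theoRegulSpCylinSemim} applies verbatim. No gaps.
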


\section{Canonical Representation of Semimartingales in Duals of Nuclear Spaces} \label{secDecomSemi}

\begin{assu}
Throught this section and unless otherwise specified $\Phi$ will always denote a nuclear space. 
\end{assu}

The aim of this section is to prove the following theorem that provides a detailed canonical representation for $\Phi'$-valued semimartingales satisfying the equicontinuity condition in the statement of Theorem \ref{theoRegulCylinSemimartigalesNuclear}. We will show later (see Proposition \ref{propSemimaDecompUltrabNucleSpace}) that this equicontinuity condition can be discarded if the nuclear space is ultrabornological, therefore generalizing the canonical representation for semimartingales in the dual Fr\'{e}chet nuclear space obtained by P\'{e}rez-Abreu in \cite{PerezAbreu:1988} (see Corollary  \ref{coroDecomSemimarFrecNucleSpace} below).  

\begin{theo}[Semimartingale canonical representation] \label{theoSemirtDecompNucleSpace}
Let $X=(X_{t}:t \geq 0)$ be $\Phi'$-valued, adapted, c\`{a}dl\`{a}g  semimartingale such that for each $T>0$,  the family of linear maps $( X_{t}: t \in [0,T])$ from $\Phi$ into $L^{0}\ProbSpace$ is equicontinuous (at the origin). Given a continuous Hilbertian seminorm $\rho$ on $\Phi$, for each $t \geq 0$, $X_{t}$ admits the unique representation  
\begin{equation}\label{eqSemimartDecompNucleSpace}
X_{t}=X_{0}+ A_{t} +M^{c}_{t}+ \int_{0}^{t} \int_{B_{\rho'}(1)} f d(\mu-\nu)(s,f) + \int_{0}^{t} \int_{B_{\rho'}(1)^{c}} f d\mu (s,f),
\end{equation}
where
\begin{enumerate}
\item $X_{0}$ is a $\mathcal{F}_{0}$-measurable $\Phi'$-valued regular random variable, 
\item $A=(A_{t}: t \geq 0)$ is a $\Phi'$-valued regular predictable process with uniformly bounded jumps satisfying that for every $\omega \in \Omega$ and $T>0$, there exists a continuous Hilbertian semi-norm $\varrho=\varrho(\omega,T)$ on $\Phi$ such that the map $t \mapsto A_{t}(\omega)$ defined on $[0,T]$ has bounded variation  in $\Phi'_{\varrho}$,
\item $M^{c}=(M^{c}_{t}: t \geq 0)$ is a $\Phi'$-valued regular continuous local martingale with $M^{c}_{0}=0$, 
\item $\mu(\omega; (0,t]; \Gamma)= \sum_{0<s \leq t} \mathbbm{1}_{\{ \Delta X_{s} \in \Gamma \}}$, $\Gamma \in \mathcal{B}(\Phi'_{0})$ with $0 \notin \overline{\Gamma}$ (here $\Phi'_{0} \defeq \Phi' \setminus \{0 \}$), is the integer-valued random measure of the jumps of $X$ with (predictable) compensator measure $\nu=\nu(\omega, dt,df)$ that satisfies the conditions: 
\begin{enumerate}
\item $\nu(\omega; \{ 0\}; \Phi')= \nu(\omega; \R_{+}; \{0\})=0$, 
\item $\nu(\omega; \{ t\}; \Phi') \leq 1$ $\forall t > 0$, 
\item $\int^{t}_{0} \int_{\Phi'} \, \abs{\inner{f}{\phi}}^{2} \wedge 1 \,  \nu (ds, df)< \infty$, $\forall \phi \in \Phi$, $t>0$,
\end{enumerate}
\item $\left( \int_{0}^{t} \int_{B_{\rho'}(1)} f d(\mu-\nu)(s,f): t \geq 0 \right)$, is a $\Phi'$-valued regular purely discontinuous local martingale with uniformly bounded jumps satisfying 
$$ \inner{\int_{0}^{t} \int_{B_{\rho'}(1)} f d(\mu-\nu)(s,f)}{\phi} = \int_{0}^{t} \int_{B_{\rho'}(1)} \inner{f}{\phi} d(\mu-\nu)(s,f), \quad \forall \, \phi \in \Phi, \, t \geq 0,$$
\item $\left( \int_{0}^{t} \int_{B_{\rho'}(1)^{c}} f d\mu (s,f) : t \geq 0 \right)$ is a $\Phi'$-valued regular  adapted c\`{a}dl\`{a}g process which has $\Prob$-a.a. paths with only a finite number of jumps on each bounded interval of $\R_{+}$ (in particular is a finite variation process) satisfying 
$$ \int_{0}^{t} \int_{B_{\rho'}(1)^{c}} f d\mu(s,f) = \sum_{s \leq t} \Delta X_{s} \mathbbm{1}_{\{ \Delta X_{s} \in B_{\rho'}(1)^{c} \}} , \quad \forall  \, t \geq 0.$$ 
\end{enumerate}
\end{theo}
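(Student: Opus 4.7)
The plan is to apply the classical scalar semimartingale canonical representation cylindrically to $\inner{X}{\phi}$, $\phi \in \Phi$, and then promote each piece to a genuine $\Phi'$-valued regular process by invoking the regularization results of Section \ref{secCylSemDualNuclear}. To begin with, Theorem \ref{theoRegulCylinSemimartigalesNuclear} furnishes a weaker countably Hilbertian topology $\theta$ on $\Phi$ such that a version of $X$ takes values in $(\widehat{\Phi}_{\theta})' = \bigcup_{n} \Phi'_{p_{n}}$ and, for each $(\omega,T)$, its path on $[0,T]$ lies in a single Hilbert space $\Phi'_{p_{n(\omega,T)}}$; we identify $X$ with this version. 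Jumps $\Delta X_{s} = X_{s} - X_{s-}$ are therefore well-defined elements of $\Phi'$ and sit, for fixed $(\omega,T)$, in a common Hilbert space in which the path is c\`adl\`ag.

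Next I define $\mu(\omega;(0,t];\Gamma) = \sum_{0<s\leq t} \mathbbm{1}_{\{\Delta X_{s} \in \Gamma\}}$. Conditions (a)--(b) are immediate, while (c) follows from the identity $\int_{0}^{t} \int_{\Phi'} (\abs{\inner{f}{\phi}}^{2} \wedge 1) \mu(ds,df) = \sum_{s \leq t} (\abs{\inner{\Delta X_{s}}{\phi}}^{2} \wedge 1)$, finite a.s.\ because each $\inner{X}{\phi}$ is a scalar semimartingale. The predictable compensator $\nu$ is then obtained by dual predictable projection in the ambient Hilbert space (Jacod--Shiryaev II.1.8). The large-jumps integral $J_{t} := \int_{0}^{t} \int_{B_{\rho'}(1)^{c}} f\, d\mu = \sum_{s \leq t} \Delta X_{s} \mathbbm{1}_{\{\Delta X_{s} \in B_{\rho'}(1)^{c}\}}$ is a pathwise finite sum on each $[0,T]$: c\`adl\`ag paths in $\Phi'_{p_{n(\omega,T)}}$ can only have finitely many jumps of $\rho'$-size exceeding $1$, and jumps with $\rho'(\Delta X_{s}) = +\infty$ (i.e.\ $\Delta X_{s} \notin \Phi'_{\rho}$) are, by the same reasoning applied to a seminorm dominating $\rho$, also finite in number on $[0,T]$.

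Set $\tilde{X}_{t} := X_{t} - X_{0} - J_{t}$. Then $\tilde{X}$ is a $\Phi'$-valued semimartingale whose jumps are $\rho'$-bounded by $1$, so each scalar $\inner{\tilde{X}}{\phi}$ is special and admits the unique canonical representation $\inner{\tilde{X}_{t}}{\phi} = A_{t}^{\phi} + M_{t}^{c,\phi} + \int_{0}^{t} \int_{B_{\rho'}(1)} \inner{f}{\phi}\, d(\mu-\nu)(s,f)$. Uniqueness of this scalar decomposition together with linearity of $\phi \mapsto \inner{\tilde{X}_{t}}{\phi}$ forces $\phi \mapsto A^{\phi}$ and $\phi \mapsto M^{c,\phi}$ to be linear, hence genuine cylindrical processes. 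The equicontinuity hypothesis on $X$ is inherited by these cylindrical processes, so Theorem \ref{theoRegulCylSemIntegBouVari} applies to the cylindrical locally integrable variation process $\phi \mapsto A^{\phi}$ and yields the regular $\Phi'$-valued drift $A$ with bounded variation paths in some Hilbert space $\Phi'_{\varrho}$ for each $(\omega,T)$. Likewise (the continuous version of) Theorem \ref{theoRegulCylinSemimartigalesNuclear} regularizes the cylindrical continuous local martingale $\phi \mapsto M^{c,\phi}$ to a $\Phi'$-valued regular continuous local martingale $M^{c}$. The small-jump compensated integral is defined as a $\Phi'_{p_{n(\omega,T)}}$-valued $L^{2}$-limit of finite-dimensional compensated Poisson integrals and satisfies the stated identity by construction. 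Uniqueness of the whole decomposition \eqref{eqSemimartDecompNucleSpace} follows by pairing with $\phi \in \Phi$ and invoking uniqueness of the scalar Jacod--Shiryaev canonical representation.

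The principal obstacle is to verify that the cylindrical drift $\phi \mapsto A^{\phi}$ and the cylindrical continuous martingale part $\phi \mapsto M^{c,\phi}$ produced by the scalar canonical representations do satisfy the equicontinuity hypotheses required by Theorems \ref{theoRegulCylSemIntegBouVari} and \ref{theoRegMartingales}; this reduces to showing that truncation of jumps by $\rho'(\cdot) \leq 1$ and passage to the predictable compensator are continuous operations on cylindrical semimartingales in the appropriate topology. A related subtlety is to check that the compensated small-jump process is a bona fide $\Phi'$-valued regular purely discontinuous local martingale whose pairing with $\phi$ coincides with the scalar compensated Poisson integral, which relies on Hilbert space stochastic integration inside $\Phi'_{p_{n(\omega,T)}}$ together with localization in $n$ to patch these local constructions into a single $\Phi'$-valued process.
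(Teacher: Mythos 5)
Your overall route is the same as the paper's: regularize $X$ into $(\widehat{\Phi}_{\vartheta})'$, build $\mu$ and its compensator, strip the large jumps, decompose the remainder scalar-wise, use uniqueness to get linear (cylindrical) components, and then regularize those components. However, you have left unproven precisely the step on which the whole argument turns, and you say so yourself: the verification that the cylindrical drift $\phi \mapsto A^{\phi}$, the cylindrical continuous part $\phi \mapsto M^{c,\phi}$ (and also the cylindrical purely discontinuous part $\phi \mapsto M^{d,\phi}$, which you need for statement (5)) satisfy the continuity/equicontinuity hypotheses of Theorems \ref{theoRegulCylSemIntegBouVari}, \ref{theoRegMartingales} and Proposition \ref{propRegulaLocalMarBoundVaria}. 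Your proposed reduction --- that ``truncation of jumps and passage to the predictable compensator are continuous operations on cylindrical semimartingales in the appropriate topology'' --- is not available as stated: the canonical-decomposition maps are \emph{not} continuous on all of $S^{0}$ (the paper itself recalls, after Proposition \ref{propRegulaLocalMarBoundVaria}, that $\mathcal{M}_{loc}$ and $\mathcal{V}$ fail to be closed in $S^{0}$, by Emery's example). The paper closes this gap differently: since all jumps of $Y=X-X_{0}-J$ lie in $B_{\rho'}(1)$, which is bounded in $(\widehat{\Phi}_{\vartheta})'$, the scalar semimartingales $\inner{Y}{\phi_{n}}$ have jumps uniformly bounded over any bounded set of test functions; Memin's results (Th\'{e}or\`{e}me IV.4 and Remarque IV.3 in \cite{Memin:1980}) say that semimartingales with jumps bounded by a fixed constant form a closed subset of $S^{0}$ on which the decomposition maps are continuous; this yields sequential closedness of $\tilde{M}^{c},\tilde{M}^{d},\tilde{A}$ as linear maps from the ultrabornological space $\widehat{\Phi}_{\vartheta}$ into the complete metrizable space $S^{0}$, and the closed graph theorem then gives their continuity, which by Proposition \ref{propCylSemimContOpeSpaceSemim} is exactly the equicontinuity you need. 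Without some argument of this kind your proof does not go through.

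Two further points are missing or insufficiently justified. First, statements (2) and (5) require the \emph{uniform} boundedness of the jumps of $A$ and of the compensated small-jump martingale; the paper proves this (Lemma \ref{lemmUnifBoundJumpsDecomSpecSemi}) by a conditional-expectation argument identifying $\inner{\Delta A_{t}}{\phi}=\Exp(\inner{\Delta Y_{t}}{\phi}\,|\,\mathcal{F}_{t^{-}})$ uniformly over bounded sets of $\phi$, and you do not address it. Second, you propose to \emph{construct} the small-jump term as an $L^{2}$-limit of finite-dimensional compensated Poisson integrals in $\Phi'_{p_{n(\omega,T)}}$; but the required square-integrability of the small jumps against $\nu$ in a Hilbertian norm is not established by the hypotheses (only the scalar bound (4)(c) is), so this construction is not justified. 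The paper avoids it: $M^{d}$ is obtained from the regularized decomposition, and only afterwards identified scalar-wise, via \eqref{eqSquareIntegCompRandMeasure} and the Kabanov--Lipcer--Sirjaev characterization of compensated jump integrals, with $\int_{0}^{t}\int_{B_{\rho'}(1)}\inner{f}{\phi}\,d(\mu-\nu)(s,f)$ (Lemma \ref{lemmDiscLocaMartAsInteg}). You should either adopt that identification argument or supply the missing integrability.
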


In order to prove Theorem \ref{theoSemirtDecompNucleSpace} we will need to go through several steps. We benefit from ideas taken from \cite{BadrikianUstunel:1996} and  \cite{PerezAbreu:1988}. 
  
First, from Theorem \ref{theoRegulCylinSemimartigalesNuclear} there exists a weaker countably Hilbertian topology $\vartheta$ on $\Phi$ such that $X$ has an indistinguishable version that is a $(\widehat{\Phi}_{\vartheta})'$-valued adapted c\`{a}dl\`{a}g semimartingale. We will identify $X$ with this version. Moreover, from Proposition \ref{propCylSemimContOpeSpaceSemim} the topology $\vartheta$ can be chosen such that $X$ defines a continuous linear map from $\widehat{\Phi}_{\vartheta}$ into $S^{0}$. Without loss of generality we can assume that $\rho$ is continuous with respect to the topology $\vartheta$. Otherwise, we can just consider another countably Hilbertian topology on $\Phi$ generated by $\rho$ toguether with the countable family of generating seminorms of $\vartheta$. An important consequence of this remark is that the unit ball $B_{\rho'}(1)$ of $\rho'$ is a bounded subset in $(\widehat{\Phi}_{\vartheta})'$. 

Now we show the existence of the different components of the representation \eqref{eqSemimartDecompNucleSpace}. Observe that since  $X$ is adapted, then $X_{0}$ is a $(\widehat{\Phi}_{\vartheta})'$-valued random variable that is $\mathcal{F}_{0}$-measurable. The continuity of the canonical inclusion from $(\widehat{\Phi}_{\theta})'$ into $\Phi'$ shows  that $X_{0}$ satisfies Theorem \ref{theoSemirtDecompNucleSpace}(1).

Now, consider the random measure of the jumps of $X$:  
\begin{equation} \label{defiRandomMeasureJumps}
\mu(\omega; (0,t]; \Gamma)= \sum_{0<s \leq t} \mathbbm{1}_{\{ \Delta X_{s}(\omega) \in \Gamma \}}, \quad \forall \, t \geq 0, \, \Gamma \in \mathcal{B}(\Phi'_{0}).
\end{equation}
Since $\mu(\omega; (0,t]; \cdot)$ has its support on $(\widehat{\Phi}_{\vartheta})'$, we only need to check that it is finite for $\Gamma \in \mathcal{B}((\widehat{\Phi}_{\vartheta})' \setminus \{0\} )$, $0 \notin \overline{\Gamma}$.  
But since the (indistinguishable version of) $X$ satisfies that for $\Prob$-a.e. $\omega \in \Omega$ and $t \geq 0$, there exists a $\vartheta$-continuous Hilbertian semi-norm $\varrho=\varrho(\omega,t)$ on $\Phi$ such that the map $s \mapsto X_{s}(\omega)$ is c\`{a}dl\`{a}g from $[0,t]$ into the Hilbert space $\Phi'_{\varrho}$ (see Remark 3.9 in \cite{FonsecaMora:2018}), and because $\Phi'_{\varrho}$ is a complete separable metric space, then $\Delta X_{s}(\omega) \neq 0$ for a finite number of $s \in [0,t]$. Therefore $\mu(\omega; (0,t]; \Gamma)< \infty $ $\Prob$-a.e. for each $\Gamma \in  \mathcal{B}(\Phi'_{0})$, $0 \notin \overline{\Gamma}$, and hence $\mu$ is a well-defined integer-valued random measure. Furthermore, since $\mu$ can be regarded as a random measure on $\R_{+} \times (\widehat{\Phi}_{\vartheta})'$, and because $((\widehat{\Phi}_{\vartheta})', \mathcal{B}((\widehat{\Phi}_{\vartheta})'))$ is a standard measurable space (see Theorem 2.1.7 in \cite{Ito}), then by VIII.66(b) in \cite{DellacherieMeyer}  $\mu$ admits a (predictable) compensator measure $\nu$ such that for each non-negative predictable function $g=g(\omega,t,f)$:
\begin{equation} \label{eqDefCompenMeasureCountHilTop}
\Exp \int_{0}^{\infty} \int_{(\widehat{\Phi}_{\vartheta})'} g d\mu= \Exp \int_{0}^{\infty} \int_{(\widehat{\Phi}_{\vartheta})'} g d\nu, 
\end{equation}
and 
\begin{gather*}
\nu(\omega; \{ 0\}; (\widehat{\Phi}_{\vartheta})') = \nu(\omega; \R_{+}; \{0\})=0,\\ 
\nu(\omega; \{ t\}; (\widehat{\Phi}_{\vartheta})' ) \leq  1, \, \, \forall t > 0.
\end{gather*}
Moreover, since for each $\phi \in \widehat{\Phi}_{\vartheta}$, $X(\phi)$ is a real-valued semimartingale, then the process $( \sum_{s \leq t} (\abs{\inner{\Delta X_{s}}{\phi}}^{2} \wedge 1): t \geq 0)$ is locally integrable (see \cite{JacodShiryaev}, Theorem I.4.47), hence from \eqref{eqDefCompenMeasureCountHilTop} we have $\forall \phi \in \widehat{\Phi}_{\vartheta}$, $t>0$, $\Prob$-a.e.
$$\int^{t}_{0} \int_{(\widehat{\Phi}_{\vartheta})'} \, \abs{\inner{f}{\phi}}^{2} \wedge 1 \,  \nu (ds, df)< \infty.$$
Since the canonical inclusion from $(\widehat{\Phi}_{\vartheta})'$ into $\Phi'$ is linear and continuous, the compensator measure $\nu$ can be lifted to $\Phi'$ and satisfy $(4)(a)$-$(c)$ in Theorem \ref{theoSemirtDecompNucleSpace}. In a similar way one can show that (see e.g. VIII.68.4 in \cite{DellacherieMeyer}) 
\begin{equation} \label{eqSquareIntegCompRandMeasure}
\int^{t}_{0} \int_{\Phi'} \, \abs{\inner{f}{\phi}}^{2} \wedge \abs{\inner{f}{\phi}} \,  \nu (ds, df)< \infty \quad \forall \, \phi \in \Phi, \, t> 0.  
\end{equation}

Now, from the properties of $\mu$ it is clear that the integral
$$ \int_{0}^{t} \int_{B_{\rho'}(1)^{c}} f d\mu(s,f) = \sum_{s \leq t} \Delta X_{s} \mathbbm{1}_{\{ \Delta X_{s} \in B_{\rho'}(1)^{c} \}} , \quad \forall \, t \geq 0,$$ 
is a $(\widehat{\Phi}_{\vartheta})'$-valued adapted c\`{a}dl\`{a}g process which has $\Prob$-a.a. paths with only a finite number of jumps on each bounded interval of $\R_{+}$. Since the canonical inclusion from $(\widehat{\Phi}_{\theta})'$ into $\Phi'$ is linear and continuous, then $\left( \int_{0}^{t} \int_{B_{\rho'}(1)^{c}} f d\mu (s,f) : t \geq 0 \right)$ as a $\Phi'$-valued process satisfying Theorem \ref{theoSemirtDecompNucleSpace}(6). 

Define $Y=(Y_{t}: t \geq 0)$ by the prescription 
\begin{equation} \label{eqDefDecoSpecSemiBoundJumps}
Y_{t}=X_{t}-X_{0} - \int_{0}^{t} \int_{B_{\rho'}(1)^{c}} f d\mu(s,f), \quad \forall t \geq 0.
\end{equation}
Then, $Y$ is a $(\widehat{\Phi}_{\vartheta})'$-valued adapted c\`{a}dl\`{a}g process with $Y_{0}=0$.  Moreover, we have the following: 

\begin{lemm}\label{lemmSpecialSemimDecompBounded}
The process $Y=(Y_{t}:t \geq 0)$ admits a (unique up to indistinguishable versions) representation  
\begin{equation}\label{eqSpecialSemimartDecompNucleSpace}
Y_{t}=M^{c}_{t}+M^{d}_{t}+A_{t}, \quad \forall t \geq 0,
\end{equation}
where $M^{c}=(M^{c}_{t}: t \geq 0)$ is a $(\widehat{\Phi}_{\vartheta})'$-valued continuous local martingale with $M^{c}_{0}=0$, $M^{d}=(M^{d}_{t}: t \geq 0)$ is a $(\widehat{\Phi}_{\vartheta})'$-valued  purely discontinuous local martingale and $M^{d}_{0}=0$, $A=(A_{t}: t \geq 0)$ is a $(\widehat{\Phi}_{\vartheta})'$-valued predictable c\`{a}dl\`{a}g process of locally integrable variation and $A_{0}=0$. 
\end{lemm}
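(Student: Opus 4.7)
The plan is to carry out the scalar canonical decomposition of $\inner{Y}{\phi}$ for each test function $\phi \in \Phi$ and then assemble the pieces into $(\widehat{\Phi}_{\vartheta})'$-valued processes using the regularization results of Section \ref{secCylSemDualNuclear}.

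First, I would observe that $Y$ has uniformly bounded jumps in the $\rho'$-norm. Indeed, from \eqref{defiRandomMeasureJumps} and \eqref{eqDefDecoSpecSemiBoundJumps} one has $\Delta Y_{t} = \Delta X_{t} \, \mathbbm{1}_{\{\Delta X_{t} \in B_{\rho'}(1)\}}$, so $\abs{\inner{\Delta Y_{t}}{\phi}} \leq \rho(\phi)$ for every $\phi \in \Phi$, uniformly in $t$ and $\omega$. By Lemma I.4.24 in \cite{JacodShiryaev}, the real-valued semimartingale $\inner{Y}{\phi}$ is therefore special and admits a unique canonical decomposition $\inner{Y_{t}}{\phi} = m_{t}(\phi) + a_{t}(\phi)$ with $m(\phi)$ a real-valued local martingale with $m_{0}(\phi)=0$ and $a(\phi)$ predictable of locally integrable variation with $a_{0}(\phi)=0$.

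Second, I would use uniqueness of the scalar canonical decomposition to deduce that $\phi \mapsto m(\phi)$ and $\phi \mapsto a(\phi)$ are linear, hence define cylindrical processes. Splitting further $m(\phi) = m^{c}(\phi) + m^{d}(\phi)$ into continuous and purely discontinuous local martingale parts (Proposition I.4.18 in \cite{JacodShiryaev}) and invoking uniqueness again, the maps $\phi \mapsto m^{c}(\phi)$ and $\phi \mapsto m^{d}(\phi)$ are also linear, yielding cylindrical local martingales in $\Phi'$.

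Third, to lift these cylindrical processes to $(\widehat{\Phi}_{\vartheta})'$-valued processes with the required path regularity, I would invoke Proposition \ref{propRegulaLocalMarBoundVaria} (together with the continuity-of-paths part of Theorem \ref{theoRegulCylinSemimartigalesNuclear}) for $m^{c}$ and $m^{d}$, and Theorem \ref{theoRegulCylSemIntegBouVari} for $a$. This reduces the problem to verifying the equicontinuity of $(m^{c}_{t} : t \in [0,T])$, $(m^{d}_{t} : t \in [0,T])$ and $(a_{t} : t \in [0,T])$ from $\Phi$ into $L^{0}\ProbSpace$ for each $T>0$. Since Proposition \ref{propCylSemimContOpeSpaceSemim} already yields that $\phi \mapsto \inner{Y}{\phi}$ is continuous from $\widehat{\Phi}_{\vartheta}$ into $S^{0}$, it suffices to check that each of the three maps sending a special semimartingale with uniformly bounded jumps to its predictable finite-variation part, its continuous local martingale part, and its purely discontinuous local martingale part, is continuous for the $S^{0}$ topology. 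A closed-graph argument analogous to the one used in the proof of Proposition \ref{propCylSemimContOpeSpaceSemim} then transfers the continuity of $\phi \mapsto \inner{Y}{\phi}$ into $S^{0}$ to continuity of each of the three component maps, relying on the fact that $\mathcal{A}_{loc}$ is closed in $S^{0}$ (\cite{Memin:1980}, Th\'{e}or\`{e}me IV.7) together with the analogous closedness of the continuous and purely discontinuous local martingale subspaces. Uniqueness of the decomposition \eqref{eqSpecialSemimartDecompNucleSpace} follows by pairing with each $\phi \in \Phi$ and using the uniqueness of the scalar canonical decomposition. The main obstacle in this program is precisely the justification that the three canonical projections are continuous on the subspace of $S^{0}$ consisting of special semimartingales with uniformly bounded jumps, since the target spaces are not generally locally convex and standard closed-range arguments must be adapted; once this is established, the lemma follows from a direct application of the regularization machinery already in place.
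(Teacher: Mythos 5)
Your strategy is the same as the paper's: bounded jumps make each $\inner{Y}{\phi}$ a special semimartingale, uniqueness of the scalar canonical and continuous/purely-discontinuous decompositions gives linearity of $\phi \mapsto m^{c}(\phi)$, $m^{d}(\phi)$, $a(\phi)$, and the regularization machinery (Propositions \ref{propCylSemimContOpeSpaceSemim} and \ref{propRegulaLocalMarBoundVaria}; Theorem \ref{theoRegulCylSemIntegBouVari} is not needed at this stage, it is only used later in the proof of Theorem \ref{theoSemirtDecompNucleSpace}) turns the three cylindrical processes into $(\widehat{\Phi}_{\vartheta})'$-valued ones. The difficulty is that the step you yourself flag as ``the main obstacle'' is precisely the heart of the lemma, and it is left unproved; moreover, the justification you sketch for it would fail. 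What is needed is that, on the class of special semimartingales whose jumps are uniformly bounded by a constant $K$, the maps assigning to a semimartingale its predictable finite-variation part and its continuous and purely discontinuous martingale parts are (sequentially) continuous for the topology of $S^{0}$; this is exactly what makes the linear maps $\phi \mapsto m^{c}(\phi)$, $m^{d}(\phi)$, $a(\phi)$ closed from $\widehat{\Phi}_{\vartheta}$ into $S^{0}$, so that the closed graph theorem can be applied to them. The paper obtains this from M\'{e}min \cite{Memin:1980}: Th\'{e}or\`{e}me IV.4 (the set of semimartingales with jumps bounded by $K$ is closed in $S^{0}$, so the limit $\inner{Y}{\phi}$ again has jumps bounded by the constant associated with the bounded set $\{\phi_{n}\} \cup \{\phi\}$) together with Remarque IV.3 (on this class, convergence in $S^{0}$ entails convergence of the canonical components).

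Your proposed substitute, namely deducing continuity of the three projections from closedness of $\mathcal{A}_{loc}$ ``together with the analogous closedness of the continuous and purely discontinuous local martingale subspaces,'' does not work as stated. The space $\mathcal{M}_{loc}$ of local martingales is not closed in $S^{0}$ (Emery's example \cite{Emery:1979}; the paper records this explicitly in the remark following Proposition \ref{propRegulaLocalMarBoundVaria}), so the closedness you want for the purely discontinuous martingale part is not available. And even where closedness does hold, it does not by itself yield continuity of the decomposition: the natural domain here, the set of special semimartingales with jumps uniformly bounded by $K$, is not a linear subspace of $S^{0}$ (it is not stable under dilations), so one cannot run a closed-graph or direct-sum-projection argument on it. A quantitative statement of the type of M\'{e}min's Remarque IV.3 — continuity of the canonical decomposition on the bounded-jump class in the semimartingale topology — is indispensable, and citing it (or proving an equivalent estimate) is what closes the gap; once that is in place, the rest of your argument, including uniqueness via pairing with $\phi$ and the regularity result used to pass from scalar identities to indistinguishability, matches the paper's proof.
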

\begin{prf}
First, observe that by construction the set of jumps $\{ \Delta Y_{t}: t \geq 0\}$ of $Y$ is contained in the bounded subset $B_{\rho'}(1)$ in $(\widehat{\Phi}_{\vartheta})'$, and consequently $\{ \Delta Y_{t}: t \geq 0\}$ is itself bounded in $(\widehat{\Phi}_{\vartheta})'$. Therefore, the definition of strong boundedness implies that for every bounded subset $C$ in $\widehat{\Phi}_{\vartheta}$ there exists a $K_{C}>0$ such that 
\begin{equation} \label{eqUnifBoundJumpsSpeciSemi}
\sup_{\phi \in C} \sup_{t \geq 0} \abs{\inner{\Delta Y_{t}}{\phi}} < K_{C}. 
\end{equation}
However, since for each $\phi \in \widehat{\Phi}_{\vartheta}$ the set $\{ \phi \}$ is bounded in $\widehat{\Phi}_{\vartheta}$, then \eqref{eqUnifBoundJumpsSpeciSemi} shows that the real-valued semimartingale $\inner{Y}{\phi}$ has uniformly bounded jumps,  hence is a special semimartingale and has a (unique) representation (see the proof of Theorem III.35 in \cite{Protter}, p.131)
$$ \inner{Y_{t}}{\phi}=M^{\phi}_{t}+A^{\phi}_{t}, \quad \forall t \geq 0, $$
where $(M^{\phi}_{t}:t \geq 0)$ is a real-valued c\`{a}dl\`{a}g local martingale, $(A_{t}^{\phi}:t \geq 0)$ is a real-valued predictable c\`{a}dl\`{a}g process  of locally integrable variation,  and $M^{\phi}_{0}=A^{\phi}_{0}=0$. Furthermore, each $M^{\phi}$ has a (unique) representation (see \cite{DellacherieMeyer}, Theorem VIII.43, p.353)
$$ M^{\phi}_{t}=M^{c,\phi}_{t}+M^{d,\phi}_{t}, \quad \forall t \geq 0, $$
where $(M^{c,\phi}_{t}:t \geq 0)$ is a real-valued continuous local martingale, $(M^{d,\phi}_{t}:t \geq 0)$ is a real-valued purely discontinuous local martingale; these two local martingales are orthogonal. Therefore, $\inner{Y}{\phi}$ has the (unique) representation  
\begin{equation}\label{eqFiniDimDecompSpecSemBoun}
\inner{Y_{t}}{\phi}=M^{c,\phi}_{t}+M^{d,\phi}_{t}+A^{\phi}_{t}, \quad \forall t \geq 0. 
\end{equation}

If for every $t \geq 0$ we define the mappings $\tilde{M}^{c}_{t}:\widehat{\Phi}_{\vartheta} \rightarrow L^{0} \ProbSpace$, $\phi \mapsto \tilde{M}^{c}_{t}(\phi)\defeq M^{c,\phi}_{t}$, $\tilde{M}^{d}_{t}:\widehat{\Phi}_{\vartheta} \rightarrow L^{0} \ProbSpace$, $\phi \mapsto \tilde{M}^{d}_{t}(\phi)\defeq M^{d,\phi}_{t}$, and $\tilde{A}_{t}:\widehat{\Phi}_{\vartheta} \rightarrow L^{0} \ProbSpace$, $\phi \mapsto \tilde{A}_{t}(\phi)\defeq A^{\phi}_{t}$, then by uniqueness of the decomposition (see the arguments in the proof of Theorem 3.1 in \cite{BadrikianUstunel:1996}) the maps $\tilde{M}^{c}_{t}$, $\tilde{M}^{d}_{t}$ and $\tilde{A}_{t}$ are linear. Therefore, $\tilde{M}^{c}=(\tilde{M}^{c}_{t}: t \geq 0)$, $\tilde{M}^{d}=(\tilde{M}^{d}_{t}: t \geq 0)$  and $\tilde{A}=(\tilde{A}_{t}: t \geq 0)$ are cylindrical semimartingales in $(\widehat{\Phi}_{\vartheta})'$ and hence define linear maps from $\widehat{\Phi}_{\vartheta}$ into $S^{0}$. Our next objective is to show they are also continuous. Since $\widehat{\Phi}_{\vartheta}$ is ultrabornological, it is enough to show that the maps $\tilde{M}^{c}$, $\tilde{M}^{d}$ and $\tilde{A}$ are sequentially closed, because in that case the closed graph theorem shows that they are continuous (\cite{NariciBeckenstein}, Theorem 14.7.3, p.475). 

Let $\phi_{n} \rightarrow \phi$ in $\widehat{\Phi}_{\vartheta}$ and suppose that $\tilde{M}^{c}(\phi_{n}) \rightarrow m^{c}$, $\tilde{M}^{d}(\phi_{n}) \rightarrow m^{d}$ and $\tilde{A}(\phi_{n}) \rightarrow a$ in $S^{0}$. By the continuity of  $Y$ from $\widehat{\Phi}_{\vartheta}$ into $S^{0}$, we have $Y(\phi_{n}) \rightarrow Y(\phi)$ in $S^{0}$. 
Since the set $C=\{ \phi_{n}: n \in \N \}$ is bounded in $\widehat{\Phi}_{\theta}$, then from \eqref{eqUnifBoundJumpsSpeciSemi} the family of real-valued semimartingales $\{ \inner{Y}{\phi_{n}}: n \in \N\}$ has jumps uniformly bounded by $K_{C}$. But as the collection of all the real-valued semimartingales with jumps uniformly bounded by $K_{C}$ is a closed subspace in $S^{0}$ (see \cite{Memin:1980}, Theorem IV.4), then $\inner{Y}{\phi}$ also has jumps uniformly bounded by $K_{C}$. Therefore, because $Y(\phi_{n}) \rightarrow Y(\phi)$ in $S^{0}$ we have that (see Remarque IV.3 in \cite{Memin:1980}) $\tilde{M}^{c}(\phi_{n}) \rightarrow \tilde{M}^{c}(\phi)$, $\tilde{M}^{d}(\phi_{n}) \rightarrow \tilde{M}^{d}(\phi)$ and $\tilde{A}(\phi_{n}) \rightarrow \tilde{A}(\phi)$ in $S^{0}$. Hence, by uniqueness of limits we get that $m^{c}=\tilde{M}^{c}(\phi)$, $m^{d}=\tilde{M}^{d}(\phi)$ and $a=\tilde{A}(\phi)$. Thus, the mappings $\tilde{M}^{c}$, $\tilde{M}^{d}$ and $\tilde{A}$ are continuous from $\widehat{\Phi}_{\vartheta}$ into $S^{0}$. 

Hence, from Propositions     \ref{propCylSemimContOpeSpaceSemim} and \ref{propRegulaLocalMarBoundVaria}, there exists another weaker countably Hilbertian topology $\theta$ on $\Phi$, larger than $\vartheta$, and a $(\widehat{\Phi}_{\theta})'$-valued continuous local martingale $M^{c}=(M^{c}_{t}: t \geq 0)$,  a $(\widehat{\Phi}_{\theta})'$-valued purely discontinuous local martingale $M^{d}=(M^{d}_{t}: t \geq 0)$, and a $(\widehat{\Phi}_{\theta})'$-valued predictable c\`{a}dl\`{a}g process of locally integrable variation $A=(A_{t}: t \geq 0)$ with $A_{0}=0$, all of them such that $\Prob$-a.e. $\forall t \geq 0$, $\phi \in \Phi$,  
\begin{gather}
 \inner{M^{c}_{t}}{\phi}=\tilde{M}^{c}_{t}(\phi), \label{eqDefPartMcDecomp} \\
\inner{M^{d}_{t}}{\phi}=\tilde{M}^{d}_{t}(\phi), \label{eqDefPartMdDecomp} \\ 
\inner{A_{t}}{\phi}=\tilde{A}_{t}(\phi) \label{eqDefPartADecomp}.
\end{gather}
But then, \eqref{eqFiniDimDecompSpecSemBoun}, \eqref{eqDefPartMcDecomp}, \eqref{eqDefPartMdDecomp} and \eqref{eqDefPartADecomp} imply that $\Prob$-a.e. $\forall t \geq 0$, $\phi \in \Phi$,
\begin{equation}\label{eqPointSpecSemiDecomp}
\inner{Y_{t}}{\phi}=\inner{M^{c}_{t}}{\phi}+ \inner{M^{d}_{t}}{\phi} +\inner{A_{t}}{\phi}.
\end{equation}
Now, since the processes $Y$, $M^{c}$, $M^{d}$ and $A$ are regular processes, then \eqref{eqPointSpecSemiDecomp} together with Proposition 2.12 in \cite{FonsecaMora:2018} implies that $Y$, $M^{c}$, $M^{d}$ and $A$ satisfy \eqref{eqSpecialSemimartDecompNucleSpace}. 
\end{prf}

\begin{lemm}\label{lemmUnifBoundJumpsDecomSpecSemi}
The processes $M^{d}=(M^{d}_{t}: t \geq 0)$ and $A=(A_{t}: t \geq 0)$ defined in Lemma \ref{lemmSpecialSemimDecompBounded} have $\Prob$-a.e. uniformly bounded jumps in $\widehat{\Phi}_{\theta}$.  
\end{lemm}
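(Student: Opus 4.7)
The plan is to combine the classical one-dimensional bound on the jumps in the canonical decomposition of a special semimartingale with a separability argument, so that a single $\Prob$-null set works for every test vector.

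First, observe that the definition \eqref{eqDefDecoSpecSemiBoundJumps} of $Y$ truncates the jumps of $X$: $\Delta Y_{t} = \Delta X_{t} \mathbbm{1}_{\{\Delta X_{t} \in B_{\rho'}(1)\}}$, so $\Delta Y_{t}(\omega) \in B_{\rho'}(1) \cup \{0\}$ for \emph{every} $(t,\omega)$, which is exactly the pointwise inequality
$$
\abs{\inner{\Delta Y_{t}(\omega)}{\phi}} \leq \rho(\phi), \qquad \forall\, \phi \in \Phi,\; t \geq 0,\; \omega \in \Omega.
$$

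Second, fix $\phi \in \Phi$ and use the decomposition $\inner{Y}{\phi} = \tilde{M}^{c}(\phi) + \tilde{M}^{d}(\phi) + \tilde{A}(\phi)$ from Lemma \ref{lemmSpecialSemimDecompBounded}: grouping the martingale parts as $m(\phi) \defeq \tilde{M}^{c}(\phi) + \tilde{M}^{d}(\phi)$ this is the canonical decomposition of the real-valued special semimartingale $\inner{Y}{\phi}$. The standard real-variable fact that in such a canonical decomposition $z = m + a$ with $\abs{\Delta z} \leq K$ the predictable part satisfies $\abs{\Delta a} \leq K$ (since $\Delta a$ is the predictable projection of $\Delta z$), and hence $\abs{\Delta m} \leq 2K$ (see, e.g., \cite{JacodShiryaev}, Section I.4), together with the continuity of $\tilde{M}^{c}(\phi)$, gives
$$
\sup_{t \geq 0} \abs{\inner{\Delta A_{t}}{\phi}} \leq \rho(\phi) \quad \text{and} \quad \sup_{t \geq 0} \abs{\inner{\Delta M^{d}_{t}}{\phi}} \leq 2\rho(\phi), \qquad \Prob\text{-a.s.}
$$

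Third, to upgrade this pointwise-in-$\phi$ bound to a uniform one, I exploit the separability of $\widehat{\Phi}_{\theta}$: pick a countable $\theta$-dense subset $D \subseteq \Phi$. The countable intersection of the exceptional null sets yields an $\Omega_{0} \subseteq \Omega$ of full probability on which both inequalities hold simultaneously for every $\phi \in D$ and every $t \geq 0$. For each $\omega \in \Omega_{0}$ and $t \geq 0$, the functionals $\phi \mapsto \inner{\Delta A_{t}(\omega)}{\phi}$ and $\phi \mapsto \inner{\Delta M^{d}_{t}(\omega)}{\phi}$ lie in $(\widehat{\Phi}_{\theta})'$ and so are continuous on $\widehat{\Phi}_{\theta}$; since $\rho$ is also $\theta$-continuous, the inequalities extend by density to every $\phi \in \widehat{\Phi}_{\theta}$. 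Thus $\Delta A_{t}(\omega) \in B_{\rho'}(1)$ and $\Delta M^{d}_{t}(\omega) \in B_{\rho'}(2)$ for all $t \geq 0$, and as these are bounded subsets of $(\widehat{\Phi}_{\theta})'$, the jump sets of $A$ and $M^{d}$ are a.s.\ uniformly bounded in $(\widehat{\Phi}_{\theta})'$ as claimed.

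The main subtlety is precisely the null-set coordination in the third step: the one-dimensional estimate is available only $\phi$-by-$\phi$, and without separability one could not merge the exceptional sets into a single negligible set valid for all test functions. Separability is, however, a built-in feature of the countably Hilbertian topology $\theta$ produced by the regularization theorem, so the argument closes.
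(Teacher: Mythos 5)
Your proof is correct and follows essentially the same route as the paper: in both, the key step is that the jumps of the predictable part are the predictable projection of the uniformly bounded jumps of $Y$ (you invoke the standard one-dimensional lemma from \cite{JacodShiryaev}, the paper reproves it via localization and conditional expectations given $\mathcal{F}_{t^{-}}$), followed by a separability/density argument in $\widehat{\Phi}_{\theta}$ to coordinate the exceptional null sets. Your explicit bounds $\rho(\phi)$ and $2\rho(\phi)$, giving $\Delta A_{t} \in B_{\rho'}(1)$ and $\Delta M^{d}_{t} \in B_{\rho'}(2)$, are simply a sharper packaging of the paper's generic constants $K_{C}$ and $2K_{C}$ for bounded subsets $C$ of $\widehat{\Phi}_{\theta}$.
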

\begin{prf}
Let $\phi \in \widehat{\Phi}_{\theta}$. By the definition of local martingale and of process of integrable variation, there exists a sequence of stopping times $(\tau_{n}:n \in \N)$ increasing to $\infty$ such that for all $n \in \N$, $(\inner{M^{c}_{t \wedge \tau_{n}}}{\phi}: t \geq 0)$ and $(\inner{M^{d}_{t \wedge \tau_{n}}}{\phi}: t \geq 0)$ are uniformly integrable martingales and $(\inner{A_{t \wedge \tau_{n}}}{\phi}: t \geq 0)$ is of integrable total variation. Moreover, since $(\inner{A_{t}}{\phi}: t \geq 0)$ is predictable and $(\inner{M^{c}_{t}}{\phi}: t \geq 0)$ is continuous, it then follows from \eqref{eqPointSpecSemiDecomp} that $\Prob$-a.e. $\forall t \geq 0$,
\begin{equation} \label{eqCondExpJumpsY}
\Exp \left( \inner{\Delta Y_{t\wedge \tau_{n}}}{\phi} \vline \mathcal{F}_{t^{-}} \right) = \Exp \left( \inner{\Delta M^{d}_{t \wedge \tau_{n}}}{\phi}+\inner{\Delta A_{t \wedge \tau_{n}}}{\phi} | \mathcal{F}_{t^{-}} \right)= \inner{\Delta A_{t \wedge \tau_{n}}}{\phi}. 
\end{equation}
Now, recall that the set of jumps $\{ \Delta Y_{t}: t \geq 0\}$ of $Y$ is bounded in $(\widehat{\Phi}_{\vartheta})'$, but since the topology $\theta$ is finer than $\vartheta$ the canonical inclusion from  $(\widehat{\Phi}_{\vartheta})'$ into $(\widehat{\Phi}_{\theta})'$ is continuous; then  $\{ \Delta Y_{t}: t \geq 0\}$ is bounded in $(\widehat{\Phi}_{\theta})'$. Hence, from \eqref{eqUnifBoundJumpsSpeciSemi} with $C=\{\phi\}$ and taking limits as $n \rightarrow \infty$ in \eqref{eqCondExpJumpsY}, we have that $\Prob$-a.e. $\forall t \geq 0$, 
$$ \Exp \left( \inner{\Delta Y_{t}}{\phi} | \mathcal{F}_{t^{-}} \right) = \inner{\Delta A_{t}}{\phi}. $$ 
Therefore, for any given bounded subset $C$ in $\widehat{\Phi}_{\theta}$ (recall that this space is separable), it follows from \eqref{eqUnifBoundJumpsSpeciSemi} that $\Prob$-a.e.
$$ \sup_{t \geq 0} \sup_{\phi \in C} \abs{\inner{\Delta A_{t}}{\phi}} < K_{C}.$$
But the above inequality together with \eqref{eqPointSpecSemiDecomp} shows that 
$\Prob$-a.e.
$$  \sup_{t \geq 0} \sup_{\phi \in C} \abs{\inner{\Delta M^{d}_{t}}{\phi}} < 2 K_{C}.$$
Hence, the processes $M^{d}$ and $A$ have $\Prob$-a.e. uniformly bounded jumps in $\widehat{\Phi}_{\theta}$.
\end{prf}

\begin{lemm}\label{lemmDiscLocaMartAsInteg}
For each $t >0$ and $\phi \in \Phi$, 
\begin{equation} \label{eqDefDiscoMartPartAsIntegral}
\inner{M^{d}_{t}}{\phi} = \int_{0}^{t} \int_{B_{\rho'}(1)} \inner{f}{\phi} d(\mu-\nu)(s,f).
\end{equation}
\end{lemm}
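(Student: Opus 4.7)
The plan is to reduce the identity to the classical canonical representation for real-valued special semimartingales with bounded jumps, applied pointwise in $\phi \in \Phi$, and then perform a change of variables to rewrite the resulting compensated integral in terms of $\mu$ and $\nu$.

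Fix $\phi \in \Phi$ and $t > 0$. By Lemma \ref{lemmSpecialSemimDecompBounded}, $\inner{Y}{\phi}$ is a special real-valued semimartingale with canonical decomposition $\inner{Y_t}{\phi}=\inner{M^{c}_t}{\phi}+\inner{M^{d}_t}{\phi}+\inner{A_t}{\phi}$. By the definition \eqref{eqDefDecoSpecSemiBoundJumps} of $Y$, its jumps satisfy $\Delta Y_s = \Delta X_s \mathbbm{1}_{\{\Delta X_s \in B_{\rho'}(1)\}}$, and since $B_{\rho'}(1)$ is bounded in $(\widehat{\Phi}_{\vartheta})'$ (as noted after the statement of the theorem), the real-valued jumps $\inner{\Delta Y_s}{\phi}$ are uniformly bounded by some constant $C_{\phi}$.

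The key step is to invoke the classical canonical representation of a real-valued semimartingale with bounded jumps (see e.g. Jacod--Shiryaev, Theorem II.2.34, with truncation function $h(x)=x$): the purely discontinuous martingale part of $\inner{Y}{\phi}$ is exactly the compensated integral $x*(\mu^{\phi}-\nu^{\phi})_t$, where $\mu^{\phi}$ is the jump measure of $\inner{Y}{\phi}$ and $\nu^{\phi}$ its predictable compensator. By uniqueness of the canonical decomposition of a real special semimartingale, this coincides with $\inner{M^{d}_t}{\phi}$.

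Next I would identify $\mu^{\phi}$ and $\nu^{\phi}$ as images of $\mu$ and $\nu$ restricted to $\R_{+}\times B_{\rho'}(1)$ under the map $(s,f)\mapsto(s,\inner{f}{\phi})$. The identification for $\mu^{\phi}$ is immediate from the jump formula above, and for $\nu^{\phi}$ it follows from the uniqueness of the predictable compensator (see VIII.66 in \cite{DellacherieMeyer}): the pushforward of $\nu|_{\R_{+}\times B_{\rho'}(1)}$ is predictable and satisfies the defining compensator identity for $\mu^{\phi}$. A change of variables in the compensated stochastic integral then yields
$$ \inner{M^{d}_t}{\phi} \;=\; \int_{0}^{t}\!\!\int_{B_{\rho'}(1)} \inner{f}{\phi}\, d(\mu-\nu)(s,f). $$
The required integrability is secured because $|\inner{f}{\phi}|\leq C_{\phi}$ on $B_{\rho'}(1)$, so that $|\inner{f}{\phi}|^{2}\mathbbm{1}_{B_{\rho'}(1)}(f)\leq \max(1,C_{\phi}^{2})(|\inner{f}{\phi}|^{2}\wedge 1)$, and condition (4)(c) of the theorem controls the right-hand side.

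The main technical obstacle is justifying the change of variables for the compensated (non-pathwise) integral. I expect to handle this by first verifying the identity for integrands of the form $x\mathbbm{1}_{\{|x|>\epsilon\}}$, where the corresponding integral against $\mu$ is a finite pathwise sum and the compensator piece is handled by the identification of $\nu^{\phi}$ with the pushforward of $\nu$, and then passing to the limit $\epsilon\downarrow 0$ in $S^{0}$ using the $\epsilon$-uniform bound on the predictable quadratic variation provided by (4)(c).
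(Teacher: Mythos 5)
Your argument is correct, but it is organized differently from the paper's. The paper first invokes Theorem 2.1 of Kabanov--Liptser--Shiryaev, using the integrability bound \eqref{eqSquareIntegCompRandMeasure}, to conclude that $\bigl( \int_{0}^{t} \int_{B_{\rho'}(1)} \inner{f}{\phi}\, d(\mu-\nu)(s,f) : t \geq 0 \bigr)$ is a purely discontinuous local martingale; since $(\inner{M^{d}_{t}}{\phi}: t\geq 0)$ is purely discontinuous as well, it then reduces \eqref{eqDefDiscoMartPartAsIntegral} to matching the jumps of the two processes, and the jump identity $\Delta \inner{M^{d}_{t}}{\phi} = \int_{B_{\rho'}(1)} \inner{f}{\phi}\, \mu(\{t\},df)-\int_{B_{\rho'}(1)} \inner{f}{\phi}\, \nu(\{t\},df)$ is obtained by the argument of P\'{e}rez-Abreu. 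You instead fix $\phi$, use the classical canonical representation of the real special semimartingale $\inner{Y}{\phi}$ (Jacod--Shiryaev II.2.34, with a truncation function that may be taken equal to the identity on the range of the uniformly bounded jumps) together with uniqueness of the canonical and continuous/purely-discontinuous decompositions to identify $\inner{M^{d}}{\phi}$ with the compensated integral against the scalar jump measure $\mu^{\phi}$ and its compensator $\nu^{\phi}$, and then transfer to $\mu$ and $\nu$ by a pushforward/change-of-variables argument, justified by $\epsilon$-truncation and passage to the limit using (4)(c). Both routes lean on the same square-integrability of $\inner{f}{\phi}$ on $B_{\rho'}(1)$ and on uniqueness statements for scalar semimartingales; what yours buys is self-containedness (no appeal to P\'{e}rez-Abreu's jump computation), at the cost of having to justify the change of variables for the non-pathwise compensated integral, which the paper's "two purely discontinuous local martingales with the same jumps coincide" argument sidesteps. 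Two small points of polish: the image of $\restr{\mu}{\R_{+}\times B_{\rho'}(1)}$ under $(s,f)\mapsto (s,\inner{f}{\phi})$ is not literally the jump measure $\mu^{\phi}$, since it may carry mass on $\{x=0\}$ when $\inner{\Delta X_{s}}{\phi}=0$ while $\Delta X_{s}\neq 0$; this is harmless because the integrand $x$ vanishes there, but it should be said. Likewise, you implicitly use \eqref{eqDefPartMdDecomp}, i.e.\ that the pairing of the regularized process $M^{d}$ with $\phi$ is a version of the scalar purely discontinuous part $M^{d,\phi}$; make that identification explicit before applying the scalar uniqueness.
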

\begin{prf}
First, for each $\phi \in \Phi$ it is a  consequence of \eqref{eqSquareIntegCompRandMeasure} (see Theorem 2.1 in \cite{KabanovLipcerSirjaev:1979}) that $\left( \int_{0}^{t} \int_{B_{\rho'}(1)} \inner{f}{\phi} d(\mu-\nu)(s,f): t \geq 0 \right)$ is a purely discontinuous local martingale satisfying 
$$ \Delta \left( \int_{0}^{t} \int_{B_{\rho'}(1)} \inner{f}{\phi} d(\mu-\nu)(s,f) \right)=\int_{B_{\rho'}(1)} \inner{f}{\phi} \mu(\{t\},f)-\int_{B_{\rho'}(1)} \inner{f}{\phi} \nu(\{t\},f). $$
Thus, since $(\inner{M^{d}_{t}}{\phi}:t \geq 0)$ is also a purely discontinuous local martingale, to prove \eqref{eqDefDiscoMartPartAsIntegral} it is enough to show that 
$$ \Delta \inner{M^{d}_{t}}{\phi} = \int_{B_{\rho'}(1)} \inner{f}{\phi} \mu(\{t\},f)-\int_{B_{\rho'}(1)} \inner{f}{\phi} \nu(\{t\},f). $$
But the above equality follows from exactly the same arguments to those used in the proof of Theorem 3 in \cite{PerezAbreu:1988}, so we leave the details to the reader. 
\end{prf}

To finallize the proof of Theorem \ref{theoSemirtDecompNucleSpace}, observe that since the canonical inclusion from $(\widehat{\Phi}_{\theta})'$ into $\Phi'$ is linear and continuous, then $A$, $M^{c}$ and $M^{d}$ define $\Phi'$-valued processes. From Lemma \ref{lemmSpecialSemimDecompBounded} we have that $M^{c}$ satisfies  Theorem \ref{theoSemirtDecompNucleSpace}(3). For $A$, it follows from Lemmas \ref{lemmSpecialSemimDecompBounded} and \ref{lemmUnifBoundJumpsDecomSpecSemi}, and Theorem \ref{theoRegulCylSemIntegBouVari}, that $A$ has an indistinguishable version that satisfies Theorem \ref{theoSemirtDecompNucleSpace}(2). If for each 
$t \geq 0$ we denote $M^{d}_{t}$ by $\int_{0}^{t} \int_{B_{\rho'}(1)} f d(\mu-\nu)(s,f)$, then it is a consequence of  Lemmas \ref{lemmSpecialSemimDecompBounded},  \ref{lemmUnifBoundJumpsDecomSpecSemi} and \ref{lemmDiscLocaMartAsInteg} that $\left( \int_{0}^{t} \int_{B_{\rho'}(1)} f d(\mu-\nu)(s,f): t \geq 0 \right)$ satisfies Theorem \ref{theoSemirtDecompNucleSpace}(5). Finally, the fact that $X$ admits the representation \eqref{eqSemimartDecompNucleSpace} follows from \eqref{eqDefDecoSpecSemiBoundJumps} and \eqref{eqSpecialSemimartDecompNucleSpace}.

\begin{rema}\label{uniqueContMartAndCompMeasu} In the proof of Theorem \ref{theoSemirtDecompNucleSpace} we have used a $(\widehat{\Phi}_{\vartheta})'$-valued indistinguishable version of $X$, which at a first glance might lead us to conclude that the random measure $\mu$ of the jumps of $X$ depends on the countably Hilbertian topology $\vartheta$ on $\Phi$. However, this is not the case as the aforementioned version of $X$ is unique (up to indistinguishable versions) as a $\Phi'$-valued process (see Theorem \ref{theoRegulCylinSemimartigalesNuclear}). Therefore, 
the random measure $\mu$ of the jumps of $X$ defined in \eqref{defiRandomMeasureJumps} is unique and consequently its compensated measure $\nu$. Hence $\mu$ and $\nu$ do not  depend on the given seminorm $\rho$. Similarly, the continuous local martingale part $M^{c}$ is independent of the given seminorm $\rho$. To prove this, suppose $M^{1,c}$ and $M^{2,c}$ denote the continuous local martingale part of any two representations of $X$ as in \eqref{eqSemimartDecompNucleSpace}. For any given $\phi \in \Phi$, let $X^{c,\phi}$ denotes the real-valued continuous local martingale corresponding to the canonical representation of $\inner{X}{\phi}$. Then, by uniqueness of $X^{c,\phi}$ we have $\Prob$-a.e.
$$\inner{M^{1,c}_{t}}{\phi} = X_{t}^{c,\phi} =\inner{M^{2,c}_{t}}{\phi}, \quad \forall t \geq 0.$$
But since $M^{1,c}$ and $M^{2,c}$ are regular processes, Proposition 2.12 in \cite{FonsecaMora:2018} shows that $M^{1,c}= M^{2,c}$. 
\end{rema}

Under additional assumptions on  $\Phi$, the next result shows that the representation \eqref{eqSemimartDecompNucleSpace} remains valid for any $\Phi'$-valued semimartingale:  

\begin{prop} \label{propSemimaDecompUltrabNucleSpace}
The conclusion of Theorem \ref{theoSemirtDecompNucleSpace} remains valid if $\Phi$ is an ultrabornological nuclear space and $X=(X_{t}: t \geq 0)$ is a $\Phi'$-valued c\`{a}dl\`{a}g semimartingale such that the probability distribution of each $X_{t}$ is a Radon measure.
\end{prop}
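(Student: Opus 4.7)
The plan is to reduce immediately to the hypotheses of Theorem \ref{theoSemirtDecompNucleSpace} by invoking the same chain of implications used in the proof of Proposition \ref{propCadlagVerSemimUltrabNuclear}. In other words, I expect no genuinely new analytic content to be required: all the real work has been done in Section \ref{secCylSemDualNuclear}, and here it just remains to check that the stronger structural assumptions on $\Phi$ and on the laws of $X_t$ force the equicontinuity hypothesis of the canonical representation theorem to hold automatically.

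Concretely, I would first use the hypothesis that each $X_t$ has a Radon law together with the fact that an ultrabornological space is barreled (Theorem 11.12.2 in \cite{NariciBeckenstein}) to apply Theorem 2.10 of \cite{FonsecaMora:2018}; this yields that the induced cylindrical random variable $X_t:\Phi \to L^0\ProbSpace$ is continuous at the origin for every $t \geq 0$. Next, I would invoke Proposition 3.10 of \cite{FonsecaMora:2018} (the same step that underlies Corollary \ref{coroRegulCylSemiUltraborno}), which upgrades pointwise continuity of each $X_t$ into equicontinuity of the family $(X_t : t \in [0,T])$ from $\Phi$ into $L^0\ProbSpace$ for every $T>0$, precisely because $\Phi$ is ultrabornological.

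At this point the induced cylindrical process satisfies the equicontinuity condition required in the statement of Theorem \ref{theoSemirtDecompNucleSpace}, so a direct application of that theorem to $X$ produces the representation \eqref{eqSemimartDecompNucleSpace} together with items (1)--(6) of its statement. Since $X$ itself is already assumed to be a $\Phi'$-valued c\`{a}dl\`{a}g semimartingale, no additional regularization step is needed and the $(\widehat{\Phi}_{\theta})'$-valued version used inside the proof of Theorem \ref{theoSemirtDecompNucleSpace} is indistinguishable from $X$ (by Theorem \ref{theoRegulCylinSemimartigalesNuclear}). Uniqueness of the components $A$, $M^c$, $\mu$, $\nu$ is inherited from that theorem and from Remark \ref{uniqueContMartAndCompMeasu}.

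There is no substantive obstacle: the only thing to verify carefully is that the chain \emph{Radon laws $+$ barreled $\Rightarrow$ continuity of each $X_t$} and \emph{continuity of each $X_t$ $+$ ultrabornological $\Rightarrow$ equicontinuity on $[0,T]$} is being applied to the correct cylindrical process, namely the one induced by the $\Phi'$-valued process $X$ via $\phi \mapsto \inner{X_t}{\phi}$. Once this bookkeeping is done, the proof is a one-line reduction to Theorem \ref{theoSemirtDecompNucleSpace}.
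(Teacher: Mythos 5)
Your proof is correct and is essentially the paper's own argument: the paper phrases the reduction as substituting Corollary \ref{coroRegulCylSemiUltraborno} and Proposition \ref{coroCylSemimContOpeSpaceSemimUltra} into the proof of Theorem \ref{theoSemirtDecompNucleSpace}, but the substance is exactly the chain you spell out (Radon laws plus barreledness of the ultrabornological space give continuity of each $X_{t}$ by Theorem 2.10 of \cite{FonsecaMora:2018}, and ultrabornologicity upgrades this to equicontinuity of $(X_{t}: t \in [0,T])$ via Proposition 3.10 of \cite{FonsecaMora:2018}). Checking the equicontinuity hypothesis up front and then invoking Theorem \ref{theoSemirtDecompNucleSpace} as a black box is a legitimate, slightly more streamlined way of saying the same thing, and you helpfully make explicit the Radon-law-to-continuity step that the paper's proof leaves implicit.
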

\begin{prf}
The result follows if in the proof of Theorem \ref{theoSemirtDecompNucleSpace} we use Corollary \ref{coroRegulCylSemiUltraborno} instead of Theorem \ref{theoRegulCylinSemimartigalesNuclear} and if instead of Proposition \ref{propCylSemimContOpeSpaceSemim} we use Corollary 
\ref{coroCylSemimContOpeSpaceSemimUltra}. 
\end{prf}

\begin{coro} \label{coroDecomSemimarFrecNucleSpace}
If $\Phi$ is a Fr\'{e}chet nuclear space or the countable inductive limit of Fr\'{e}chet nuclear spaces, then each 
$\Phi'$-valued c\`{a}dl\`{a}g semimartingale $X=(X_{t}: t \geq 0)$ possesses a representation satisfying the conditions in Theorem \ref{theoSemirtDecompNucleSpace}.
\end{coro}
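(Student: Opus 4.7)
The plan is to mimic the strategy already used in Corollary \ref{coroCadlagSemimarFrecNucleSpace} and simply reduce the claim to Proposition \ref{propSemimaDecompUltrabNucleSpace}. So the first thing I would verify is that under either hypothesis on $\Phi$ the space is ultrabornological: a Fr\'{e}chet space is ultrabornological by Example 13.2.8(b) and Theorem 13.2.12 in \cite{NariciBeckenstein}, and a countable inductive limit of ultrabornological spaces is again ultrabornological. Hence $\Phi$ satisfies the first hypothesis of Proposition \ref{propSemimaDecompUltrabNucleSpace}.

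Next I would verify the Radon law hypothesis. By Corollary 1.3 of Dalecky--Fomin \cite{DaleckyFomin}, p.11 (already invoked in the proof of Corollary \ref{coroCadlagSemimarFrecNucleSpace}), every Borel probability measure on $\Phi'_{\beta}$ is Radon whenever $\Phi$ is a Fr\'{e}chet nuclear space or a countable inductive limit of such spaces. In particular, for every $t\geq 0$ the law $\mu_{t}$ of the $\Phi'$-valued random variable $X_{t}$ is Radon.

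With both hypotheses in force, Proposition \ref{propSemimaDecompUltrabNucleSpace} applies directly to $X=(X_{t}:t\geq 0)$ and furnishes the canonical representation \eqref{eqSemimartDecompNucleSpace} together with all the properties (1)--(6) of Theorem \ref{theoSemirtDecompNucleSpace}. There is no real obstacle here, since the two hypotheses of Proposition \ref{propSemimaDecompUltrabNucleSpace} have already been established in the literature for precisely this class of nuclear spaces; the only care needed is to cite the ultrabornologicity of inductive limits and the Dalecky--Fomin Radonness theorem correctly, so the proof should consist of essentially two short sentences.
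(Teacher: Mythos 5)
Your proposal is correct and follows essentially the same route as the paper: invoke Dalecky--Fomin to get that every Borel law on $\Phi'_{\beta}$ is Radon for this class of spaces, then apply Proposition \ref{propSemimaDecompUltrabNucleSpace}. The only difference is that you spell out the ultrabornologicity of $\Phi$ explicitly, which the paper treats as known (cf. Example \ref{examNuclearSpaces}); that adds nothing problematic.
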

\begin{prf}
When $\Phi$ is a Fr\'{e}chet nuclear space or a countable inductive limit of Fr\'{e}chet nuclear spaces, then every Borel measure on $\Phi'_{\beta}$ is a Radon measure (see Corollary 1.3 of Dalecky and Fomin \cite{DaleckyFomin}, p.11). In particular, for each $t \geq 0$ the probability distribution of $X_{t}$ is Radon. The result now follows from   Proposition \ref{propSemimaDecompUltrabNucleSpace}. 
\end{prf} 

The representation in Theorem \ref{theoSemirtDecompNucleSpace} leads naturaly to the following definition:

\begin{defi}
Let $X=(X_{t}:t \geq 0)$ be $\Phi'$-valued adapted c\`{a}dl\`{a}g  semimartingale such that for each $T>0$,  the family of linear maps $( X_{t}: t \in [0,T])$ from $\Phi$ into $L^{0}\ProbSpace$ is equicontinuous (at the origin). Given a continuous Hilbertian seminorm $\rho$ on $\Phi$, we call \emph{characteristics of $X$} relative to $\rho$ the triplet $(A,C,\nu)$ consisting in:
\begin{enumerate}
\item $A$ is the $\Phi'$-valued process defined in Theorem \ref{theoSemirtDecompNucleSpace}(2). 
\item  $C: \Omega \times \R_{+} \times \Phi \times \Phi$ is the map defined by 
$$ C(\omega, t, \phi, \varphi)= \llangle \, \inner{M^{c}}{\phi} , \inner{M^{c}}{\varphi} \ \rrangle_{t}(\omega), \quad \forall \, (\omega, t, \phi, \varphi) \in  \Omega \times \R_{+} \times \Phi \times \Phi,$$
where $M^{c}$ is as given in Theorem \ref{theoSemirtDecompNucleSpace}(3), and $\llangle \, \inner{M^{c}}{\phi} , \inner{M^{c}}{\varphi} \ \rrangle$ is the angle bracket process of the continuous local martingales $\inner{M^{c}}{\phi}$ and $\inner{M^{c}}{\varphi} $. 
\item $\nu$ is the (predictable) compensator measure of the random measure $\mu$ associated to the jumps of $X$, as given in Theorem \ref{theoSemirtDecompNucleSpace}(4).
\end{enumerate}
\end{defi}

\begin{rema} It follows from Remark \ref{uniqueContMartAndCompMeasu} that $C$ and $\nu$ do not depend on the choice of the seminorm $\rho$ on $\Phi$, while $A=A(\rho)$ does.
\end{rema}

\section{Characteristics and L\'{e}vy Processes}\label{sectCharLevyProce} 

In this section we study in detail the canonical decomposition and characteristics of a $\Phi'$-valued L\'{e}vy process and how they relate with their \emph{L\'{e}vy-It\^{o} decomposition} studied in \cite{FonsecaMora:Levy}.  

Let $L=( L_{t}: t \geq 0)$ be a $\Phi'$-valued c\`{a}dl\`{a}g L\'{e}vy process. Suppose that for every $T > 0$, the family $( L_{t}: t \in [0,T] )$ of linear maps from $\Phi$ into $L^{0} \ProbSpace$  is equicontinuous (at the origin) (see Example \ref{examCylLevyRegulari}).   
Under the above conditions it is proved in Section 4 in \cite{FonsecaMora:Levy} that the random measure $\mu$ of the jumps of $L$ is a Poisson Random measure, that we denote by $N$, and that its (predictable) compensator measure is of the form $\nu(\omega; dt; df)= dt \nu (df)$, where $\nu$ is a \emph{L\'{e}vy measure} on $\Phi'$ in the following sense (see \cite{FonsecaMora:Levy}, Theorem 4.11):
\begin{enumerate}
\item $\nu (\{ 0 \})=0$, 
\item for each neighborhood of zero $U \subseteq \Phi'$, the  restriction $\restr{\nu}{U^{c}}$ of $\nu$ on the set $U^{c}$ belongs to the space $\goth{M}^{b}_{R}(\Phi')$ of bounded Radon measures on $\Phi'$,    
\item there exists a continuous Hilbertian semi-norm $\rho$ on $\Phi$ such that 
\begin{equation} \label{integrabilityPropertyLevyMeasure}
\int_{B_{\rho'}(1)} \rho'(f)^{2} \nu (df) < \infty,  \quad \mbox{and} \quad  \restr{\nu}{B_{\rho'}(1)^{c}} \in \goth{M}^{b}_{R}(\Phi'), 
\end{equation}
where recall that $\rho'$ is the dual norm of $\rho$ (see Sect. \ref{secNotaDefi}) and $B_{\rho'}(1) \defeq B_{\rho}(1)^{0}=\{ f \in \Phi': \rho'(f) \leq 1\}$. 
\end{enumerate}
Here is important to stress the fact that the seminorm $\rho$ satisfying \eqref{integrabilityPropertyLevyMeasure} is not unique. Indeed, any continuous Hilbert semi-norm $q$ on  such that $\rho \leq q$ satisfies \eqref{integrabilityPropertyLevyMeasure}.

It is shown in Theorem 4.17 in \cite{FonsecaMora:Levy} that relative to a continuous Hilbertian seminorm $\rho$ on $\Phi$ satisfying \eqref{integrabilityPropertyLevyMeasure},  for each $t \geq 0$, $L_{t}$ admits the unique representation
\begin{equation} \label{levyItoDecomposition}
L_{t}=t\goth{m}+W_{t}+\int_{B_{\rho'}(1)} f \widetilde{N} (t,df)+\int_{B_{\rho'}(1)^{c}} f N (t,df)
\end{equation}
that is usually called the \emph{L\'{e}vy-It\^{o} decomposition} of $L$. In \eqref{levyItoDecomposition}, we have that $\goth{m} \in \Phi'$, $\widetilde{N}(dt,df)= N(dt,df)-dt \, \nu(df)$ is the compensated Poisson random measure, and $( W_{t} : t \geq 0)$ is a $\Phi'$-valued L\'{e}vy process with continuous paths (also called a $\Phi'$-valued Wiener process) with zero-mean (i.e. $\Exp (\inner{W_{t}}{\phi}) = 0$ for each $t \geq 0$ and $\phi \in \Phi$) and \emph{covariance functional} $\mathcal{Q}$ satisfying 
\begin{equation}\label{covarianceFunctWienerProcess}
\Exp \left( \inner{W_{t}}{\phi} \inner{W_{s}}{\varphi} \right) = ( t \wedge s ) \mathcal{Q} (\phi, \varphi), \quad \forall \, \phi, \varphi \in \Phi, \, s, t \geq 0. 
\end{equation}
Observe that $\mathcal{Q}$ is a continuous, symmetric, non-negative bilinear form on $\Phi \times \Phi$. It is important to remark that all the random components of the representation \eqref{levyItoDecomposition} are independent. 

Observe that when we compare \eqref{levyItoDecomposition} with \eqref{eqSemimartDecompNucleSpace}, we conclude that $A_{t}=t \goth{m}$, $M_{t}^{c}=W_{t}$, and that $\int_{0}^{t} \int_{B_{\rho'}(1)} f d(\mu-\nu)(s,f)$ and  $\int_{0}^{t} \int_{B_{\rho'}(1)^{c}} f d\mu (s,f)$ corresponds to the Poisson integrals $\int_{B_{\rho'}(1)} f \widetilde{N} (t,df)$ and $\int_{B_{\rho'}(1)^{c}} f N (t,df)$ (for details on the definition of Poisson integrals see \cite{FonsecaMora:Levy}). 
Moreover, since for each $\phi \in \Phi$ we have that $\inner{W}{\phi}$ is a real-valued Wiener process, then (see Theorem II.4.4 in \cite{JacodShiryaev}) it follows from \eqref{covarianceFunctWienerProcess} that 
$$ C(\omega, t, \phi, \varphi)= \llangle \, \inner{W}{\phi} , \inner{W}{\varphi} \ \rrangle_{t}(\omega)=\Exp \left( \inner{W_{t}}{\phi} \inner{W_{t}}{\varphi} \right) = t \mathcal{Q} (\phi, \varphi), $$ for all $ (\omega, t, \phi, \varphi) \in  \Omega \times \R_{+} \times \Phi \times \Phi$.
Therefore, we conclude that the characteristics of $L$ relative to $\rho$ satisfying  \eqref{integrabilityPropertyLevyMeasure} are deterministic and have the form:
\begin{equation} \label{eqCharactOfLevyProcess}
A_{t}=t \goth{m}, \quad  C(\omega, t, \phi, \varphi)=t \mathcal{Q} (\phi, \varphi), \quad \nu(\omega; dt; df)= dt \, \nu (df). 
\end{equation}

The next result shows that the above form of the characteristics is exclusive of the $\Phi'$-valued L\'{e}vy processes: 

\begin{theo}\label{theoCharactLevyProc}
Let $L=(L_{t}:t \geq 0)$ be $\Phi'$-valued, adapted, c\`{a}dl\`{a}g  process such that for each $T>0$,  the family of linear maps $( L_{t}: t \in [0,T])$ from $\Phi$ into $L^{0}\ProbSpace$ is equicontinuous (at the origin). Then, $L$ is a $\Phi'$-valued L\'{e}vy process if and only if it is a $\Phi'$-valued semimartingale and there exists a continuous Hilbertian semi-norm $\rho$ on $\Phi$ such that the  characteristics of $L$ relative to $\rho$ are of the form  \eqref{eqCharactOfLevyProcess}, where $\goth{m} \in \Phi'$, $\mathcal{Q}$ is a continuous, symmetric, non-negative bilinear form on $\Phi \times \Phi$, and $\nu$ is a L\'{e}vy measure on $\Phi$ for which $\rho$ satisfy  \eqref{integrabilityPropertyLevyMeasure}. Moreover, for all
$t \geq 0$, $\phi \in \Phi$, 
\begin{equation} \label{levyKhintchineFormulaLevyProcess}
\begin{split}
& \Exp \left( e^{i \inner{L_{t}}{\phi} } \right) = e^{t \eta(\phi)}, \quad  \mbox{ with} \\ 
& \eta(\phi)= i \inner{\goth{m}}{\phi} - \frac{1}{2} \mathcal{Q}(\phi,\phi) + \int_{\Phi'} \left( e^{i \inner{f}{\phi}} -1 - i \inner{f}{\phi} \ind{ B_{\rho'}(1)}{f} \right) \nu(d f).  
\end{split}
\end{equation}
\end{theo}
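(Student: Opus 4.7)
The plan is to treat the two implications separately, leveraging the L\'{e}vy-It\^{o} decomposition from \cite{FonsecaMora:Levy} for the forward direction and applying It\^{o}'s formula to the canonical representation from Theorem \ref{theoSemirtDecompNucleSpace} for the converse.

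For the ($\Rightarrow$) direction, assume $L$ is a $\Phi'$-valued L\'{e}vy process. The equicontinuity hypothesis gives the L\'{e}vy-It\^{o} decomposition \eqref{levyItoDecomposition} for some continuous Hilbertian seminorm $\rho$ satisfying \eqref{integrabilityPropertyLevyMeasure}. Comparing \eqref{levyItoDecomposition} term by term with the canonical representation \eqref{eqSemimartDecompNucleSpace}, and invoking the uniqueness of the constituents recorded in Remark \ref{uniqueContMartAndCompMeasu} (applied both to $M^{c}$ and to the compensator $\nu$, while the predictability of $A$ forces $A_{t}=t\goth{m}$), I identify $A_{t}=t\goth{m}$, $M^{c}=W$, $\mu=N$, and $\nu(\omega;dt;df)=dt\,\nu(df)$, so the characteristics take the form \eqref{eqCharactOfLevyProcess} exactly as computed in the paragraph preceding the theorem statement. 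The L\'{e}vy-Khintchine formula \eqref{levyKhintchineFormulaLevyProcess} then follows by computing $\Exp(e^{i\inner{L_{t}}{\phi}})$ through the independence of the four summands in \eqref{levyItoDecomposition}, using the standard identities for the characteristic function of a deterministic vector, of a centred Gaussian with covariance $t\mathcal{Q}(\phi,\phi)$, and of the Poisson integrals $\int_{B_{\rho'}(1)}f\,\widetilde{N}(t,df)$ and $\int_{B_{\rho'}(1)^{c}}f\,N(t,df)$ (the latter two obtained from the Campbell-type formulas for Poisson random measures established in \cite{FonsecaMora:Levy}).

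For the ($\Leftarrow$) direction, assume $L$ is a semimartingale with characteristics of the form \eqref{eqCharactOfLevyProcess}. Fix $\phi\in\Phi$. From $\llangle \inner{M^{c}}{\phi},\inner{M^{c}}{\phi}\rrangle_{t}=t\mathcal{Q}(\phi,\phi)$ and L\'{e}vy's characterisation, $\inner{M^{c}}{\phi}$ is a real Wiener process with variance $t\mathcal{Q}(\phi,\phi)$; from the deterministic product form of $\nu$ and the standard characterisation (e.g.\ Theorem II.4.8 in \cite{JacodShiryaev}), the integer-valued random measure $\mu$ is a Poisson random measure with intensity $dt\,\nu(df)$. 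Applying It\^{o}'s formula for real semimartingales to $u\mapsto e^{i\inner{L_{u}}{\phi}}$ and using the canonical representation \eqref{eqSemimartDecompNucleSpace} to separate the drift, continuous quadratic variation, and jump contributions, I obtain
\begin{equation*}
e^{i\inner{L_{t}}{\phi}}-e^{i\inner{L_{s}}{\phi}}=\eta(\phi)\int_{s}^{t}e^{i\inner{L_{u-}}{\phi}}\,du+R_{s,t},
\end{equation*}
where $R_{s,t}$ is a local martingale starting at $s$ and $\eta(\phi)$ is the expression in \eqref{levyKhintchineFormulaLevyProcess}. After localisation, boundedness of $e^{i\inner{L_{u-}}{\phi}}$ removes the local-martingale issue; taking conditional expectations and applying Fubini yields the integral equation
\begin{equation*}
\Exp\!\left(e^{i\inner{L_{t}-L_{s}}{\phi}}\,\big|\,\mathcal{F}_{s}\right)=1+\eta(\phi)\int_{s}^{t}\Exp\!\left(e^{i\inner{L_{u}-L_{s}}{\phi}}\,\big|\,\mathcal{F}_{s}\right)du,
\end{equation*}
whose unique bounded solution is $e^{(t-s)\eta(\phi)}$.

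From this identity I read off all the L\'{e}vy-process properties: (i) replacing $\phi$ by a linear combination $\sum_{k}\alpha_{k}\phi_{k}$ and repeating the argument shows that every finite-dimensional conditional characteristic function of $L_{t}-L_{s}$ given $\mathcal{F}_{s}$ is deterministic, hence the cylindrical distribution of $L_{t}-L_{s}$ is independent of $\mathcal{F}_{s}$; since $L_{t}-L_{s}$ has a Radon law by the regularity part of Theorem \ref{theoRegulCylinSemimartigalesNuclear} (its law is determined by finite-dimensional projections), this promotes to genuine $\Phi'$-valued independence and yields independent increments; (ii) the right-hand side depends on $s,t$ only through $t-s$, giving stationary increments; (iii) continuity of $t\mapsto e^{t\eta(\phi)}$ at $0$ for every $\phi$, together with the nuclear-space L\'{e}vy continuity theorem used in \cite{FonsecaMora:Levy}, forces $t\mapsto\mu_{t}$ to be continuous at the origin in the weak topology on Radon measures. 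Combined with the $\Phi'$-valued c\`{a}dl\`{a}g hypothesis and $L_{0}=0$ (read off from the representation $L_{0}+\cdots$ by evaluating at $t=0$; if $L_{0}\neq 0$ a trivial shift absorbs it into $\goth{m}$ in the sense required by the definition), $L$ is a $\Phi'$-valued L\'{e}vy process.

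The main obstacle I expect is the rigorous derivation of the integral equation for the conditional characteristic function from It\^{o}'s formula: one must correctly translate the $\Phi'$-valued characteristics \eqref{eqCharactOfLevyProcess} into the real-valued characteristics of the semimartingale $\inner{L}{\phi}$ (the pushforward of $\nu$ under $f\mapsto\inner{f}{\phi}$, and the angle bracket $t\mathcal{Q}(\phi,\phi)$), and justify the interchange of conditional expectation and the time integral over $[s,t]$ using \eqref{eqSquareIntegCompRandMeasure} and boundedness. A secondary issue is the promotion of cylindrical to $\Phi'$-valued independence, which relies crucially on the Radon regularity of the laws supplied by Theorem \ref{theoRegulCylinSemimartigalesNuclear}.
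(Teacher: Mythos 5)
Your proposal is correct in substance, and its forward implication coincides with the paper's (compare the L\'{e}vy--It\^{o} decomposition \eqref{levyItoDecomposition} with the canonical representation \eqref{eqSemimartDecompNucleSpace}, identify terms, and quote the L\'{e}vy--Khintchine formula from \cite{FonsecaMora:Levy}); the difference lies in the converse. There the paper projects to finite dimensions: it computes the characteristics $(a,c,\lambda)$ of $(\inner{L}{\phi_{1}},\dots,\inner{L}{\phi_{n}})$ relative to the truncation $h(x)=x\mathbbm{1}_{B}$, $B=\pi_{\phi_{1},\dots,\phi_{n}}(B_{\rho'}(1))$, checks that they are deterministic and linear in $t$, invokes Corollary II.4.19 of \cite{JacodShiryaev} to conclude that each projection is an $\R^{n}$-valued L\'{e}vy process (so $L$ is a cylindrical L\'{e}vy process), and then applies Theorem 3.8 of \cite{FonsecaMora:Levy} to upgrade $L$ to a $\Phi'$-valued L\'{e}vy process. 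You instead re-derive both cited ingredients by hand: the It\^{o}-formula argument and the exponential integral equation for the conditional characteristic function is essentially the proof of the Jacod--Shiryaev criterion in the scalar/cylindrical setting, and the promotion of cylindrical independence and stationarity to $\Phi'$-valued statements via determinacy of Radon laws by finite-dimensional projections is the content of Theorem 3.8 of \cite{FonsecaMora:Levy}. Your route is therefore more self-contained and makes \eqref{levyKhintchineFormulaLevyProcess} drop out directly, at the cost of length; the paper's route confines the new work to translating the $\Phi'$-characteristics \eqref{eqCharactOfLevyProcess} into those of the projections, which is exactly the ``main obstacle'' you flag and is the bulk of the paper's computation (its equations \eqref{eqEqualiContPartCylLevy}--\eqref{eqDefiLinPartCylLevy}). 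Two small points to tighten: when you condition the It\^{o} identity, the deterministic form of the characteristics actually makes the relevant local martingales true martingales (the stochastic integral against $\inner{M^{c}}{\phi}$ has a deterministic bracket bound $t\,\mathcal{Q}(\phi,\phi)$, and the compensated jump integrand is $\nu$-integrable on bounded intervals by \eqref{integrabilityPropertyLevyMeasure}), which is cleaner than ``localisation plus boundedness''; and your parenthetical that a nonzero $L_{0}$ can be ``absorbed into $\goth{m}$'' is not literally possible, since a constant is not of the form $t\goth{m}$ --- the definition of a L\'{e}vy process requires $L_{0}=0$, an assumption the paper's proof also uses implicitly, so this is a shared boundary issue rather than a defect of your argument.
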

\begin{proof}
We have already proved that if $L$ is a $\Phi'$-valued L\'{e}vy process then its characteristics relative to $\rho$ are of the form \eqref{eqCharactOfLevyProcess}. Moreover, in that case  \eqref{levyKhintchineFormulaLevyProcess} corresponds to its L\'{e}vy-Khintchine formula (see Theorem 4.18 in  \cite{FonsecaMora:Levy}). 

Assume now that $L$ is a $\Phi'$-valued semimartingale with canonical representation \eqref{eqSemimartDecompNucleSpace} and such that the characteristics of $L$ with respect to $\rho$ are of the form  \eqref{eqCharactOfLevyProcess}. We will show that $L$ induces a cylindrical L\'{e}vy process in $\Phi'$. 

Let $\phi_{1}, \dots, \phi_{n} \in \Phi$ and let $L(\phi_{1}, \dots, \phi_{n})=(L_{t}(\phi_{1}, \dots, \phi_{n}): t\geq 0)$ defined by 
$$ L_{t}(\phi_{1}, \dots, \phi_{n}) \defeq
(\inner{L_{t}}{\phi_{1}}, \dots, \inner{L_{t}}{\phi_{n}}), \quad \forall t \geq 0. $$
From the corresponding properties of $L$, it is clear that   $L(\phi_{1}, \dots, \phi_{n})$ is a $\R^{n}$-valued c\`{a}dl\`{a}g adapted semimartingale. We now study the form of its characteristics $(a,c,\lambda)$ (Definition II.2.6 in \cite{JacodShiryaev})  relative to the truncation function $h(x)=x \mathbbm{1}_{B}$, where $B= \pi_{\phi_{1}, \dots, \phi_{n}}(B_{\rho'}(1))$ and $\pi_{\phi_{1}, \dots, \phi_{n}}: \Phi' \rightarrow \R^{n}$ is the projection 
$$\pi_{\phi_{1}, \dots, \phi_{n}}(f)=(\inner{f}{\phi_{1}}, \dots, \inner{f}{\phi_{n}}), \quad \forall \, f \in \Phi'.$$ 

We study first $c=(c_{ij})_{i,j \leq n}$. Let $M^{c}(\phi_{1}, \dots, \phi_{n})$ denotes the continuous local martingale part of the $\R^{n}$-valued process $L(\phi_{1}, \dots, \phi_{n})$. From \eqref{eqSemimartDecompNucleSpace} and the uniqueness of the continuous martingale part it follows that:
\begin{equation} \label{eqEqualiContPartCylLevy}
M^{c}_{t}(\phi_{1}, \dots, \phi_{n})=(\inner{M^{c}_{t}}{\phi_{1}}, \dots, \inner{M^{c}_{t}}{\phi_{n}}), \quad \forall \, t \geq 0.
\end{equation}
Then, from the definition of $c=(c_{ij})_{i,j \leq n}$, \eqref{eqCharactOfLevyProcess} and the corresponding properties of $\mathcal{Q}$, for each $i, j \leq n$ we have
\begin{equation} \label{eqDefiConQuaVarPartCylLevy}
 c_{i,j}(t,\omega) = \llangle \, \inner{M^{c}}{\phi_{i}} , \inner{M^{c}}{\phi_{j}} \ \rrangle_{t}(\omega) = t q_{i,j}, \quad   \forall \, \omega \in \Omega, \, t \geq 0, 
\end{equation}
where $q=(q_{ij})_{i,j \leq n}$, defined by $q_{i,j} = \mathcal{Q} (\phi_{i}, \phi_{j})$, is a symmetric non-negative $n \times n$ matrix. 

Now, $\lambda$ is the predictable compensator measure of the random measure $\xi$ of the jumps of $L(\phi_{1}, \dots, \phi_{n})$. For a set $D \in \mathcal{B}(\R^{n}_{0})$ ($\R^{n}_{0}= \R^{n}  \setminus \{ 0\}$),  we have that  
\begin{eqnarray}
\xi(\omega; (0,t]; D)
& = &  \sum_{0 \leq s \leq t} \ind{D}{\Delta L_{s}(\phi_{1}, \dots, \phi_{n})(\omega)} \nonumber \\
& = & \sum_{0 \leq s \leq t} \ind{\pi_{\phi_{1}, \dots, \phi_{n}}^{-1}(D)}{\Delta L_{s}(\omega)}  \nonumber \\
& = & \mu(\omega ; (0,t], \pi_{\phi_{1}, \dots, \phi_{n}}^{-1}(D)).   \label{eqEqualiRandMeasuCylLevy}
\end{eqnarray}
But since the predictable compensated measure is unique,  we have that for each $D \in \mathcal{B}(\R^{n}_{0})$, 
\begin{equation} \label{eqEqualiCompMeasuCylLevy}
 \lambda(\omega; t; D)= \nu(\omega; t; \pi_{\phi_{1}, \dots, \phi_{n}}^{-1}(D)). 
\end{equation}
Then, from \eqref{eqCharactOfLevyProcess} we have that 
\begin{equation}  \label{eqDefiCompMeasuCylLevy}
 \lambda(\omega; dt; dy)= dt \, \nu \circ \pi_{\phi_{1}, \dots, \phi_{n}}^{-1}(dy).   
\end{equation} 
Moreover, $\nu \circ \pi_{\phi_{1}, \dots, \phi_{n}}^{-1}$ is a L\'{e}vy measure on $\R^{n}$ since  Theorem \ref{theoSemirtDecompNucleSpace}(4)(c) implies that 
\begin{eqnarray*}
t \int_{\R^{n}} \, \abs{y}^{2} \wedge 1 \, \nu \circ \pi_{\phi_{1}, \dots, \phi_{n}}^{-1}(dy) & = & \int_{0}^{t} \int_{\R^{n}} \abs{y}^{2} \wedge 1 \, ds \, \nu \circ \pi_{\phi_{1}, \dots, \phi_{n}}^{-1}(dy) \\
& = & \int_{0}^{t} \int_{\Phi'} \abs{\pi_{\phi_{1}, \dots, \phi_{n}}(f)}^{2} \wedge 1 \, \nu(ds, df)< \infty. 
\end{eqnarray*}
We now study the $\R^{n}$-valued predictable finite variation process $a=(a_{t}: t \geq 0)$. First, observe that \eqref{eqEqualiRandMeasuCylLevy} and \eqref{eqEqualiCompMeasuCylLevy} show that for all  $t \geq 0$, 
\begin{gather}
 \left( \inner{\int_{0}^{t} \int_{B_{\rho'}(1)} f \,  d(\mu-\nu)(s,f)}{\phi_{1}}, \dots , \inner{\int_{0}^{t} \int_{B_{\rho'}(1)} f \,  d(\mu-\nu)(s,f)}{\phi_{n}} \right) 
 \nonumber \\
  = \int_{0}^{t} \int_{B} y \,  d(\xi-\lambda)(s,y), \label{eqEquCompRandoIntegCylLevy}
\end{gather} 
and 
\begin{gather}
\left(  \inner{\int_{0}^{t} \int_{B_{\rho'}(1)^{c}} f \,  d\mu(s,f)}{\phi_{1}}, \dots, \inner{\int_{0}^{t} \int_{B_{\rho'}(1)^{c}} f \,  d\mu(s,f)}{\phi_{n}} \right) \nonumber \\
= \int_{0}^{t} \int_{B^{c}} y \,  d\xi(s,f). \label{eqEquRandoIntegCylLevy}
\end{gather} 
Then, by considering the canonical decomposition of $L(\phi_{1}, \dots, \phi_{n})$, \eqref{eqSemimartDecompNucleSpace}, \eqref{eqCharactOfLevyProcess},   
\eqref{eqEqualiContPartCylLevy},  \eqref{eqEquCompRandoIntegCylLevy}, and \eqref{eqEquRandoIntegCylLevy}, it follows that 
\begin{equation} \label{eqDefiLinPartCylLevy}
a_{t}(\omega) =\left(  \inner{A_{t}(\omega)}{\phi_{1}}, \dots, \inner{A_{t}(\omega)}{\phi_{n}} \right)= t m, \quad \forall \, \omega \in \Omega, \, t \geq 0, 
\end{equation}
where $m=(\inner{\goth{m}}{\phi_{1}}, \dots, \inner{\goth{m}}{\phi_{n}}) \in \R^{n}$. 

Now, the particular form of  the characteristics $(a,c,\lambda)$ of the $\R^{n}$-valued  semimartingale $L(\phi_{1}, \dots, \phi_{n})$ given in \eqref{eqDefiLinPartCylLevy}, \eqref{eqDefiConQuaVarPartCylLevy},  \eqref{eqDefiCompMeasuCylLevy} respectively, and from Corollary II.4.19 in \cite{JacodShiryaev}, it follows that $L(\phi_{1}, \dots, \phi_{n})$ is a $\R^{d}$-valued  L\'{e}vy process. Therefore, $L$ induces a cylindrical L\'{e}vy process in $\Phi'$. But then, Theorem 3.8 in \cite{FonsecaMora:Levy} shows that $L$ has an indistinguishable version that is a $\Phi'$-valued L\'{e}vy process. Hence, $L$ is itself a $\Phi'$-valued L\'{e}vy process.
\end{proof}

\textbf{Acknowledgements} { 
The author acknowledge The University of Costa Rica for providing financial support through the grant ``B9131-An\'{a}lisis Estoc\'{a}stico con Procesos Cil\'{i}ndricos''. Many thanks are also due to the referees for their helpful and insightful comments that contributed greatly  to improve the presentation of this manuscript.  
}

\end{document}